\newcommand{\m}{\mbox}
\newcommand{\eps}{\varepsilon}
\renewcommand{\leq}{\leqslant}
\renewcommand{\geq}{\geqslant}
\renewcommand{\Re}{\mathrm{Re}\,}
\newcommand{\Eq}{\Longleftrightarrow}
\newcommand{\vers}{\longrightarrow}
\newcommand{\tend}{\xrightarrow[n\to\infty]{}}
\newcommand{\cvf}{\rightharpoonup}
\newcommand{\lent}{[\kern-0.15em[}
\newcommand{\rent}{]\kern-0.15em]}
\newcommand{\ninf}[1]{{\left\|#1\right\|}_{\infty}}
\newcommand{\nh}[1]{{\left\|#1\right\|}_{H^1}}
\newcommand{\nht}[1]{{\left\|#1\right\|}_{H^{\frac{3}{4}}}}
\newcommand{\nlu}[1]{{\left\|#1\right\|}_{L^1}}
\newcommand{\nld}[1]{{\left\|#1\right\|}_{L^2}}
\newcommand{\nli}[1]{{\left\|#1\right\|}_{L^{\infty}}}
\newcommand{\nhs}[1]{{\left\|#1\right\|}_{H^s}}
\newcommand{\nluld}[1]{{\|#1\|}_{L_x^1L_{[T,+\infty)}^2}}
\newcommand{\nlcld}[1]{{\|#1\|}_{L_x^5L_{[T,+\infty)}^{10}}}
\newcommand{\Nlcld}[1]{{\left\|#1\right\|}_{L_x^5L_{[T,+\infty)}^{10}}}
\newcommand{\carre}[1]{{#1}^2}
\newcommand{\Carre}[1]{{\left( #1 \right)}^2}
\newcommand{\puiss}[2]{{#1}^{#2}}
\newcommand{\Puiss}[2]{{\left( #1 \right)}^{#2}}
\newcommand{\Qp}{Q^{\frac{p+1}{2}}}
\newcommand{\supt}{\sup_{t\in [0,T]}}
\newcommand{\drondx}{\frac{\partial}{\partial x}}
\newcommand{\abc}{\alpha,\beta,\gamma}
\newcommand{\abcd}{\alpha,\beta,\gamma,\delta}
\newcommand{\sig}[1]{\sigma_{#1}^{\lambda}}
\newcommand{\alphap}{\alpha_+^A}
\newcommand{\alpham}{\alpha_-^A}
\newcommand{\nls}{_{\mathrm{NLS}}}
\newcommand{\gn}{_{\mathrm{GN}}}
\newcommand{\wt}[1]{\widetilde{#1}}
\newcommand{\N}{\mathbb{N}}
\newcommand{\R}{\mathbb{R}}
\newcommand{\C}{\mathbb{C}}
\renewcommand{\H}{\mathcal{H}}
\renewcommand{\L}{\mathcal{L}}
\newcommand{\M}{\mathcal{M}}
\newcommand{\U}{\mathcal{U}}
\newcommand{\V}{\mathcal{V}}
\newcommand{\Y}{\mathcal{Y}}
\newcommand{\Z}{\mathcal{Z}}
\theoremstyle{plain}
\newtheorem{theo}{Theorem}[section]
\newtheorem*{theobis}{Theorem}
\newtheorem{prop}[theo]{Proposition}
\newtheorem{lem}[theo]{Lemma}
\newtheorem{cor}[theo]{Corollary}
\newtheorem{claim}[theo]{Claim}
\theoremstyle{definition}
\newtheorem{defi}[theo]{Definition}
\newtheorem{rem}[theo]{Remark}
\newtheorem*{notation}{Notation}
\newtheorem*{ack}{Acknowledgements}
\numberwithin{equation}{section}
\title{Construction and characterization of solutions converging to solitons for supercritical gKdV equations}
\author{Vianney Combet}
\date{Universit\'e de Versailles Saint-Quentin-en-Yvelines,
 Math\'ematiques, UMR 8100, \\
 45, av. des \'Etats-Unis,
 78035 Versailles Cedex, France\\
 vianney.combet@math.uvsq.fr}
\begin{document}

\maketitle

\begin{abstract}
We consider the generalized Korteweg-de Vries equation \[ \partial_t u +\partial_x^3 u +\partial_x(u^p)=0,\quad (t,x)\in\R^2, \] in the supercritical case $p>5$, and we are interested in solutions which converge to a soliton in large time in $H^1$. In the subcritical case ($p<5$), such solutions are forced to be exactly solitons by variational characterization \cite{bss,weinstein:lyapunov}, but no such result exists in the supercritical case. In this paper, we first construct a "special solution" in this case by a compactness argument, \emph{i.e.} a solution which converges to a soliton without being a soliton. Secondly, using a description of the spectrum of the linearized operator around a soliton \cite{pegoweinstein}, we construct a one parameter family of special solutions which characterizes all such special solutions.
\end{abstract}

\section{Introduction}

\subsection{The generalized Korteweg-de Vries equation} \label{subsec:gKdV}

We consider the generalized Korteweg-de Vries equation: \begin{equation} \label{eq:gKdV} \tag{gKdV} \begin{cases} \partial_t u+\partial_x^3 u +\partial_x(u^p)=0\\ u(0)=u_0\in H^1(\R) \end{cases} \end{equation} where $(t,x)\in\R^2$ and $p\geq 2$ is integer. The following quantities are formally conserved for solutions of \eqref{eq:gKdV}: \begin{gather} \label{eq:mass} \int u^2(t) = \int u^2(0)\quad \m{(mass)},\\ \label{eq:energy} E(u(t)) = \frac{1}{2}\int u_x^2(t) -\frac{1}{p+1}\int u^{p+1}(t) = E(u(0))\quad \m{(energy)}. \end{gather}

Kenig, Ponce and Vega \cite{kpv} have shown that the local Cauchy problem for \eqref{eq:gKdV} is well posed in $H^1(\R)$: for $u_0\in H^1(\R)$, there exist $T>0$ and a solution $u\in C^0([0,T],H^1(\R))$ of \eqref{eq:gKdV} satisfying $u(0)=u_0$ which is unique in some class $Y_T\subset C^0([0,T],H^1(\R))$. Moreover, if $T^*\geq T$ is the maximal time of existence of $u$, then either $T^*=+\infty$ which means that $u(t)$ is a global solution, or $T^*<+\infty$ and then $\nh{u(t)} \to +\infty$ as $t\uparrow T^*$ ($u(t)$ is a finite time blow up solution). Throughout this paper, when referring to an $H^1$ solution of \eqref{eq:gKdV}, we mean a solution in the above sense. Finally, if $u_0\in H^s(\R)$ for some $s\geq 1$, then $u(t)\in H^s(\R)$ for all $t\in [0,T)$.

In the case where $2\leq p<5$, it is standard that all solutions in $H^1$ are global and uniformly bounded by the energy and mass conservations and the following Gagliardo-Nirenberg inequality: \begin{equation} \label{eq:GN} \forall v\in H^1(\R),\quad \int \puiss{|v|}{p+1} \leq C\gn(p) \Puiss{\int v_x^2}{\frac{p-1}{4}}\Puiss{\int v^2}{\frac{p+3}{4}} \end{equation} with optimal constant $C\gn(p)>0$. In the case $p=5$, the existence of finite time blow up solutions was proved by Merle \cite{merle} and Martel and Merle \cite{martel:blowup}. Therefore $p=5$ is the critical exponent for the long time behavior of solutions of \eqref{eq:gKdV}. For $p>5$, the existence of blow up solutions is an open problem.

We recall that a fundamental property of equations \eqref{eq:gKdV} is the existence of a family of explicit traveling wave solutions. Let $Q$ be the only solution (up to translations) of \[ Q>0,\quad Q\in H^1(\R),\quad Q''+Q^p=Q,\quad \m{i.e.}\ Q(x)=\Puiss{\frac{p+1}{2\cosh^2\left(\frac{p-1}{2}x\right)}}{\frac{1}{p-1}}. \] Note that $Q$ is the unique minimizer of the Gagliardo-Nirenberg inequality \eqref{eq:GN} (see \cite{cazenave} for the case $p=5$ for example), \emph{i.e.} for $v\in H^1(\R)$: \begin{equation} \label{eq:caracGN} {\|v\|}_{L^{p+1}}^{p+1} = C\gn(p)\nld{v_x}^{\frac{p-1}{2}}\nld{v}^{\frac{p+3}{2}} \Eq \exists (\lambda_0,a_0,b_0)\in\R_+^* \times\R\times\R :  v(x)=a_0Q(\lambda_0 x+b_0). \end{equation} For all $c_0>0$ and $x_0\in\R$, $R_{c_0,x_0}(t,x)=Q_{c_0}(x-x_0-c_0t)$ is a solution of \eqref{eq:gKdV}, where \[ Q_{c_0}(x)=c_0^{\frac{1}{p-1}}Q(\sqrt{c_0}x). \] We call solitons these solutions though they are known to be solitons only for $p=2,3$ (in the sense that they are stable by interaction).

It is well known that solitons are orbitally stable (see definition \ref{th:orbital}) for $p<5$ and unstable for $p>5$. An important fact used by Weinstein in \cite{weinstein:lyapunov} to prove their orbital stability when $p<5$ is the following variational characterization of $Q_{c_0}$: if $u$ is a solution of \eqref{eq:gKdV} such that $E(u)= E(Q_{c_0})$ and $\int u^2 = \int Q_{c_0}^2$ for some $c_0>0$, then there exists $x_0\in\R$ such that $u=R_{c_0,x_0}$. As a direct consequence, if now $u(t)$ is a solution such that \begin{equation} \label{eq:tendversQc} \lim_{t\to+\infty} \inf_{y\in\R} {\|u(t)-Q_{c_0}(\cdot-y)\|}_{H^1(\R)}=0 \end{equation} (\emph{i.e.} $u$ converges to $Q_{c_0}$ in the suitable sense), then $u=R_{c_0,x_0}$. For $p=5$, the same is true for similar reasons (see \cite{weinstein:singularities}).

In the present paper, we focus on the supercritical case $p>5$. Some asymptotic results around solitons have been proved: orbital instability of solitons by Bona \emph{et al.} \cite{bss} (see also \cite{gss}) and asymptotic stability (in some sense) by Martel and Merle \cite{martel:general} for example. But available variational arguments do not allow to classify all solutions of \eqref{eq:gKdV} satisfying \eqref{eq:tendversQc}. In fact, in section \ref{sec:compactness}, we construct a solution of \eqref{eq:gKdV} satisfying \eqref{eq:tendversQc} which is not a soliton (we call \emph{special solution} such a solution). In section \ref{sec:contraction}, by another method, we construct a whole family of such solutions, and we completely characterize solutions satisfying \eqref{eq:tendversQc}. This method is strongly inspired of arguments developed by Duyckaerts and Roudenko in \cite{duy}, themselves an adaptation of arguments developed by Duyckaerts and Merle in \cite{duymerle}. For reader's convenience, we recall in the next section the results in \cite{duy} related to our paper.

\subsection{The Non-Linear Schrödinger equation case}

We recall Duyckaerts and Roudenko's results for \eqref{eq:NLS}. They consider in \cite{duy} the 3d focusing cubic non-linear Schrödinger equation: \begin{equation} \label{eq:NLS} \tag{NLS} \begin{cases} i\partial_t u+ \Delta u + {|u|}^2u =0,\quad (x,t)\in\R^3\times\R,\\ u_{|t=0} =u_0\in H^1(\R^3). \end{cases} \end{equation} This equation is $\dot{H}^{1/2}$-critical, and so $L^2$-supercritical like \eqref{eq:gKdV} for $p>5$, while \cite{duymerle} is devoted to the $\dot{H}^1$-critical equation. Similarly to \eqref{eq:gKdV}, \eqref{eq:NLS} is locally well posed in $H^1$, and solutions of \eqref{eq:NLS} satisfy the following conservation laws: \begin{gather*} E\nls [u](t) = \frac{1}{2} \int {|\nabla u(x,t)|}^2\,dx -\frac{1}{4} \int {|u(x,t)|}^4\,dx = E\nls [u](0),\\ M\nls [u](t) = \int {|u(x,t)|}^2\,dx = M\nls [u](0). \end{gather*} Moreover, if $Q$ is the unique (in a suitable sense) solution of the non-linear elliptic equation $-Q+\Delta Q+{|Q|}^2 Q=0$, then $e^{it}Q(x)$ is a soliton solution of \eqref{eq:NLS}.

Theorem 2 in \cite{duy} states the existence of two radial solutions $Q^+(t)$ and $Q^-(t)$ of \eqref{eq:NLS} such that $M\nls[Q^+]=M\nls[Q^-]=M\nls[Q]$, $E\nls[Q^+]=E\nls[Q^-]=E\nls[Q]$, $[0,+\infty)$ is in the time domain of definition of $Q^{\pm}(t)$, and there exists $e_0>0$ such that: $\forall t\geq 0,\quad \nh{Q^{\pm}(t)-e^{it}Q}\leq Ce^{-e_0t}$. Moreover, $Q^-(t)$ is globally defined and scatters for negative time, and the negative time of existence of $Q^+(t)$ is finite.

They also prove the following classification theorem \cite[theorem 3]{duy}:

\begin{theobis}[\cite{duy}]
Let $u$ be a solution of \eqref{eq:NLS} satisfying $E\nls[u]M\nls[u]=E\nls[Q]M\nls[Q]$.
\begin{enumerate}[(a)]
\item If $\nld{\nabla u_0}\nld{u_0} < \nld{\nabla Q}\nld{Q}$, then either $u$ scatters or $u=Q^-$ up to the symmetries.
\item If $\nld{\nabla u_0}\nld{u_0} = \nld{\nabla Q}\nld{Q}$, then $u=e^{it}Q$ up to the symmetries.
\item If $\nld{\nabla u_0}\nld{u_0} > \nld{\nabla Q}\nld{Q}$ and $u_0$ is radial or of finite variance, then either the interval of existence of $u$ is of finite length or $u=Q^+$ up to the symmetries.
\end{enumerate}
\end{theobis}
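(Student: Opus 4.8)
The plan is to follow the strategy of Duyckaerts--Merle \cite{duymerle} adapted to the mass-supercritical ($\dot H^{1/2}$-critical) regime; the proof rests on three ingredients: a sharp variational/coercivity estimate near the ground state, the spectral structure of the operator obtained by linearizing \eqref{eq:NLS} around $e^{it}Q$, and the concentration--compactness/rigidity scheme of Kenig--Merle below the threshold. First I would set up modulation. Using the scaling symmetry $u\mapsto\lambda u(\lambda^2t,\lambda\,\cdot)$, which preserves $E\nls M\nls$ and $\nld{\nabla u}\nld{u}$, normalize the mass so that $M\nls[u]=M\nls[Q]$; then the hypothesis forces $E\nls[u]=E\nls[Q]$ and reduces everything to comparing $\nld{\nabla u(t)}$ with $\nld{\nabla Q}$. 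Introduce $d(t)$, the $H^1$-distance of $u(t)$ to the orbit $\{e^{i\theta}Q(\cdot-y)\}$ (for radial or finite-variance data one may drop translations); when $\nld{\nabla u(t)}\nld{u(t)}$ is near $\nld{\nabla Q}\nld{Q}$, the sharp Gagliardo--Nirenberg inequality gives $d(t)\sim\big|\nld{\nabla u(t)}\nld{u(t)}-\nld{\nabla Q}\nld{Q}\big|$ together with a decomposition $u(t)=e^{i\theta(t)}\big(Q+h(t)\big)(\cdot-y(t))$ (after rescaling) with $\nh{h(t)}\lesssim d(t)$ and $h(t)$ orthogonal to the symmetry directions. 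Case (b) is then immediate: $\nld{\nabla u_0}\nld{u_0}=\nld{\nabla Q}\nld{Q}$ forces equality in the Gagliardo--Nirenberg inequality, hence by its rigidity $u_0=e^{i\theta_0}Q(\cdot-y_0)$ up to scaling, so $u=e^{it}Q$ up to the symmetries.

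Next comes the spectral analysis. Linearizing \eqref{eq:NLS} around $e^{it}Q$ yields a matrix operator $\mathcal L$ whose spectrum consists of a simple pair of real eigenvalues $\pm e_0$, with eigenfunctions $\mathcal Y_\pm$, a kernel generated exactly by the symmetry group, and the remainder confined to $i\R$ with a spectral gap; on the subspace orthogonal to the kernel and to $\mathcal Y_\pm$ the linearized energy is coercive. Projecting $h(t)$ onto $\mathcal Y_\pm$ produces coordinates $a_\pm(t)$ with $\frac{d}{dt}a_\pm=\pm e_0\,a_\pm+O(d^2)$, while $d(t)^2\lesssim a_+(t)^2+a_-(t)^2+|\,\nld{\nabla u}\nld{u}-\nld{\nabla Q}\nld{Q}\,|$. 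From this one extracts the \emph{ejection lemma}: once $d$ enters a fixed small ball, either $d(t)\to 0$ exponentially (and one of $a_\pm\to 0$ monotonically) or $d$ exits the ball, the side of exit being governed by the sign of the dominant mode. A fixed-point/compactness argument along the unstable direction then builds, for each small real parameter $a$, a solution $Q^a$ with $a_+(t)\sim a e^{-e_0t}$ and $d(t)\lesssim e^{-e_0t}$; the scaling symmetry absorbs $|a|$, the sign of $a$ distinguishes the two solutions $Q^{\pm}$, and conservation of mass and energy forces $M\nls[Q^{\pm}]=M\nls[Q]$, $E\nls[Q^{\pm}]=E\nls[Q]$ and pins down on which side of $\nld{\nabla Q}\nld{Q}$ the quantity $\nld{\nabla Q^{\pm}(t)}\nld{Q^{\pm}(t)}$ sits. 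Uniqueness --- that \emph{any} solution converging exponentially to $e^{it}Q$ equals some $Q^{\pm}$ up to the symmetries --- follows from the contraction estimate in the construction together with the normalization of $|a|$ by scaling.

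Finally the rigidity. Let $u$ be a threshold solution with $\nld{\nabla u_0}\nld{u_0}\neq\nld{\nabla Q}\nld{Q}$; by continuity and conservation of $E\nls M\nls$, the strict inequality persists throughout the interval of existence --- below in case (a), above in case (c). In case (c) with $u_0$ radial or of finite variance, the virial identity $\frac{d^2}{dt^2}\int|x|^2|u(t)|^2\,dx=8\big(\nld{\nabla u(t)}^2-\tfrac34\|u(t)\|_{L^4}^4\big)$ combined with $E\nls[u]M\nls[u]=E\nls[Q]M\nls[Q]$ and $\nld{\nabla u(t)}\nld{u(t)}>\nld{\nabla Q}\nld{Q}$ makes the right-hand side bounded above by a negative constant as long as $d(t)$ stays away from $0$; hence either $\int|x|^2|u|^2$ becomes negative in finite time, so the interval of existence is of finite length, or $d(t_n)\to 0$ along a sequence, and then the ejection lemma and the uniqueness statement force $u=Q^+$ up to the symmetries. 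In case (a) one runs the Kenig--Merle concentration--compactness scheme: either $u$ scatters, or there is a critical element compact modulo symmetries; sitting exactly at the energy threshold, this element is not excluded by the usual rigidity argument, so one instead shows it satisfies $d(t)\to 0$ and concludes, via the spectral/ejection analysis and uniqueness, that $u=Q^-$ (the global solution scattering as $t\to-\infty$).

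The main obstacle is precisely this last point: at the exact threshold $E\nls[u]M\nls[u]=E\nls[Q]M\nls[Q]$ the concentration--compactness argument does not by itself kill the critical element, and one must prove a fine "no-return" rigidity --- a solution that stays near the soliton orbit for all time yet does not converge to it cannot exist --- by exploiting the exponential instability encoded in $\pm e_0$. Carrying out that no-return argument, and then matching the exit behaviour of near-soliton solutions to scattering below the threshold and to finite-time blow up (via the virial estimate, using radiality or finite variance) above it, is the technical heart of the proof.
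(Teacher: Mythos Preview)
The paper does not prove this statement. The theorem is quoted from Duyckaerts--Roudenko \cite{duy} in the introduction as background and motivation; it is stated in a starred theorem environment with the citation and no proof follows. The paper's own contribution is the analogous result for \eqref{eq:gKdV}, namely Theorem~\ref{th:main}, whose proof occupies Sections~\ref{sec:compactness} and~\ref{sec:contraction}.

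Your outline is a reasonable high-level sketch of the strategy in \cite{duy} (scaling normalization, modulation, spectral decomposition with the pair $\pm e_0$, construction of $Q^\pm$ by fixed point, virial in the supercritical-gradient case, and a concentration--compactness/rigidity argument at threshold in the subcritical-gradient case), but there is nothing in the present paper to compare it against. If the intent was to compare with what this paper actually proves, you should be looking at Theorem~\ref{th:main}: there the authors do \emph{not} run a concentration--compactness scheme or a virial argument; instead they (i) construct one special solution by a compactness method (Section~\ref{sec:compactness}) independent of the Pego--Weinstein spectral information, and then (ii) build the full family $U^A$ and prove uniqueness by a contraction argument using the smoothing estimates of \cite{kpv,kpvcritical} and the coercivity in Lemma~\ref{th:Z} (Section~\ref{sec:contraction}). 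In particular, the classification in this paper is only of solutions converging to the soliton (condition~\eqref{eq:tendversQc}), not of all threshold solutions as in \cite{duy}.
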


\noindent In particular, if $\lim_{t\to+\infty} \nh{u(t)-e^{it}Q}=0$, then $u=e^{it}Q$, $Q^+$ or $Q^-$ up to the symmetries.

Among the various ingredients used to prove results above, one of the most important is a sharp analysis of the spectrum $\sigma(\L\nls)$ of the linearized Schrödinger operator around the ground state solution $e^{it}Q$, due to Grillakis \cite{grillakis} and Weinstein \cite{weinstein:modulational}. They prove that $\sigma(\L\nls)\cap\R=\{ -e_0,0,+e_0\}$ with $e_0>0$, and moreover that $e_0$ and $-e_0$ are simple eigenvalues of $\L\nls$ with eigenfunctions $\Y_+^{\mathrm{NLS}}$ and $\Y_-^{\mathrm{NLS}} = \overline{\Y_+^{\mathrm{NLS}}}$. This structure, which is similar for \eqref{eq:gKdV} according to Pego and Weinstein \cite{pegoweinstein}, will also be crucial to prove our main result (exposed in the next section).

\subsection{Main result and outline of the paper}

In this paper, we consider similar questions for the \eqref{eq:gKdV} equation in the supercritical case $p>5$. Recall that similarly to the \eqref{eq:NLS} case, Pego and Weinstein have determined in \cite{pegoweinstein} the spectrum of the linearized operator $\L$ around the soliton $Q(x-t)$: $\sigma(\L)\cap\R = \{-e_0,0,+e_0\}$ with $e_0>0$, and moreover $e_0$ and $-e_0$ are simple eigenvalues of $\L$ with eigenfunctions $\Y_+$ and $\Y_-$ which are exponentially decaying (see proposition \ref{th:spectrum} and corollary \ref{th:Ydecay}). We now state precisely our main result:

\begin{theo} \label{th:main}
Let $p>5$.
\begin{enumerate}
\item \emph{(Existence of a family of special solutions).} There exists a one-parameter family ${(U^A)}_{A\in\R}$ of solutions of \eqref{eq:gKdV} such that \[ \lim_{t\to +\infty} \nh{U^A(t,\cdot+t)-Q}=0. \] Moreover, for all $A\in\R$, there exists $t_0=t_0(A)\in\R$ such that for all $s\in\R$, there exists $C>0$ such that \[ \forall t\geq t_0,\quad \nhs{U^A(t,\cdot+t)-Q-Ae^{-e_0t}\Y_+}\leq Ce^{-2e_0 t}. \]
\item \emph{(Classification of special solutions).} If $u$ is a solution of \eqref{eq:gKdV} such that \[ \lim_{t\to+\infty} \inf_{y\in\R} \nh{u(t)-Q(\cdot-y)} =0, \] then there exist $A\in\R$, $t_0\in\R$ and $x_0\in\R$ such that $u(t)=U^A(t,\cdot-x_0)$ for $t\geq t_0$.
\end{enumerate}
\end{theo}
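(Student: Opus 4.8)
My plan is to follow the strategy that, by the authors' own account, is adapted from Duyckaerts–Roudenko, splitting the proof into the construction part (item 1) and the classification part (item 2), with the spectral structure of $\L$ from Pego–Weinstein (proposition~\ref{th:spectrum}, corollary~\ref{th:Ydecay}) as the engine.

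\emph{Construction (item 1).} For each $A\in\R$ I would look for a solution in the traveling frame, i.e.\ write $u(t,x)=Q(x-t)+v(t,x-t)$ so that $v$ solves a perturbed equation $\partial_t v = \L v + N(v)$ where $N$ collects the nonlinear terms and $\L$ is the linearized operator whose real spectrum is $\{-e_0,0,e_0\}$. The ansatz is to build, by a fixed-point/iteration scheme run backward from $t=+\infty$, an exact solution whose leading behavior is $v(t)\approx Ae^{-e_0t}\Y_+$: one sets up the Duhamel formula for the flow on the positive-eigenvalue part projected along $\Y_+$ and uses the decaying semigroup estimates on the complementary (stable plus neutral) directions, together with the exponential decay of $\Y_+$, to close a contraction in a space with weight $e^{(e_0+\eps)t}$ (or $e^{2e_0 t}$ for the refined estimate). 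The neutral/kernel direction (generated by $Q'$, reflecting translation invariance) must be handled by a modulation parameter $x_0(t)$; this is where most of the technical care goes. Iterating the scheme and bootstrapping regularity gives the bound $\nhs{U^A(t,\cdot+t)-Q-Ae^{-e_0t}\Y_+}\le Ce^{-2e_0t}$ in every $H^s$, for $t\ge t_0(A)$. The $A=0$ case should just reproduce the soliton, and the compactness construction of section~\ref{sec:compactness} can be invoked to guarantee at least one genuinely non-soliton special solution exists, serving as a consistency check.

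\emph{Classification (item 2).} Suppose $u$ satisfies $\inf_y\nh{u(t)-Q(\cdot-y)}\to0$. First, by a modulation argument, choose $C^1$ parameters $x(t)$ (and possibly a scaling $c(t)\to1$, ruled out or controlled via the mass/energy near $Q$) so that $v(t)=u(t,\cdot+x(t))-Q$ is small in $H^1$, orthogonal to the kernel directions, and solves the perturbed linearized equation. The key dynamical step is to show the \emph{modulated error decays exponentially}: using the coercivity of the linearized energy on the orthogonal complement of the unstable/neutral modes together with a virial-type or monotonicity functional (the analogue of the convexity argument in Duyckaerts–Merle/Duyckaerts–Roudenko), one proves $\nh{v(t)}\le Ce^{-e_0t}$ — a soliton-convergence without rate is upgraded to an exponential rate. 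Once exponential decay is known, project $v$ onto $\Y_+$: the coefficient $a_+(t):=(v(t),\Z_+)$ (for the dual eigenfunction $\Z_+$) satisfies $\dot a_+ = e_0 a_+ + O(e^{-2e_0t})$, hence $a_+(t)=Ae^{-e_0t}+O(e^{-2e_0t})$ for a well-defined $A\in\R$; the stable and neutral components are then slaved to $a_+$. Finally one compares $u$ with $U^A$ (translated by the limit of $x(t)$, call it $x_0$): the difference $w=u-U^A(\cdot-x_0)$ satisfies the same equation, has the same unstable coefficient to leading order, decays faster than $e^{-e_0t}$, and the complementary semigroup is a contraction on such fast-decaying data, forcing $w\equiv0$ on $[t_0,\infty)$ — a uniqueness/rigidity statement. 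By uniqueness of the Cauchy problem this identifies $u(t)=U^A(t,\cdot-x_0)$ for $t\ge t_0$.

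\emph{Main obstacle.} The crux — and the step I expect to be hardest — is proving the exponential decay rate of the modulated error $v(t)$ starting only from the qualitative hypothesis $\nh{v(t)}\to0$. For \eqref{eq:gKdV} one cannot simply borrow the Schrödinger virial identity; one needs a suitable monotonicity/localized-virial functional compatible with the third-order dispersion and the soliton's exponential tails, combined delicately with the spectral gap $e_0$ and the coercivity of the linearized energy modulo the unstable direction (which must itself be shown to be negligible, e.g.\ via a convexity-in-time argument showing the unstable mode cannot dominate if $v\to0$). Managing simultaneously the unstable, neutral (translation) and stable parts, and controlling the low-regularity nonlinear interactions in the gKdV Duhamel formula (using the Kenig–Ponce–Vega smoothing/Strichartz framework), is where the real work lies.
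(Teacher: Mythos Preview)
Your overall architecture is right, but you misidentify the key technical ingredients at both crucial places, and the paper's actual arguments are simpler than what you propose.

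\textbf{Construction.} You say the neutral mode forces a modulation parameter $x_0(t)$ in the fixed point. The paper does \emph{not} modulate in the construction. Instead, it first builds, for each $A$, arbitrarily high-order formal approximate profiles
\[
\V_k^A(t,x)=\sum_{j=1}^k e^{-je_0 t}\Z_j^A(x),\qquad \Z_1^A=A\Y_+,\quad \Z_{j+1}^A=-(\L-(j{+}1)e_0)^{-1}(\cdots),
\]
with error $\eps_k^A=O(e^{-(k+1)e_0 t})$ in $\H$. Then it runs a contraction, not via spectral semigroup decomposition, but via the Kenig--Ponce--Vega smoothing estimates (proposition~\ref{th:kpv}) in mixed norms $L_x^1L_t^2$, $L_x^5L_t^{10}$. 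The linear term $pQ^{p-1}v$ (which encodes the neutral direction) contributes a factor $C/\sqrt{k}$ to the contraction constant; taking $k$ large makes it $<1$ with no modulation needed. This is the substitute for your $x_0(t)$.

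\textbf{Exponential decay upgrade.} You flag this as the crux and suggest a virial/monotonicity functional. No virial is used. The argument is: write $\eps(t)=u(t,\cdot+x(t))-Q$ with $\eps\perp Q'$. Since the Weinstein functional $F$ is conserved and $F(u)\to F(Q)$, one has $F(u)=F(Q)$ exactly, hence the expansion gives $|(L\eps,\eps)|\le C\nh{\eps}^3$. Combined with the coercivity of $L$ modulo $Z_\pm,Q'$ (lemma~\ref{th:Z}(v)), this yields $\nh{\eps}^2\le C(\alpha_+^2+\alpha_-^2)$ where $\alpha_\pm=(\eps,Z_\pm)$. These satisfy the ODE system $|\alpha_\pm'\mp e_0\alpha_\pm|\le C(\alpha_+^2+\alpha_-^2)$; an elementary barrier argument on $h=\alpha_+-M\alpha_-^2$ forces $|\alpha_+|\le M\alpha_-^2$, then $|\alpha_-|\le Ce^{-e_0 t}$. (Note also: it is $\alpha_-$, not the unstable coefficient, whose renormalized limit defines $A$; your $a_+$ and the sign in its ODE are reversed.)

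\textbf{Final identification.} The last step is not a single semigroup contraction. The paper first iterates the functional/coercivity/ODE argument on $v=u(\cdot+t+x_0)-U^A(\cdot+t)$ to get $\nh{v(t)}\le C_\gamma e^{-\gamma t}$ for \emph{every} $\gamma>0$, and only then runs a short-time contraction (again with KPV norms) on intervals of fixed length $\tau$ independent of $t_1$, giving $\nh{v(t_1)}\le 3K\nh{v(t_1+\tau)}$; iterating and choosing $\gamma$ large kills $v$.
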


\begin{rem}
From theorem \ref{th:main}, there are actually only three different special solutions $U^A$ up to translations in time and in space: $U^1$, $U^{-1}$ and $Q(\cdot-t)$ (see proposition \ref{th:troistypes}). This is of course related to the three solutions of \eqref{eq:NLS} constructed in \cite{duy}: $Q^+(t)$, $Q^-(t)$ and $e^{it}Q$.

From section \ref{subsec:remarks}, we can chose the normalization of $\Y_{\pm}$ so that for $A<0$, $\nld{\partial_x U^A}<\nld{Q'}$. Then $U^{-1}(t)$ is global, \emph{i.e.} defined for all $t\in\R$. It would be interesting to investigate in more details its behavior as $t\to-\infty$. On the other hand, the behavior of $U^1(t)$ is not known for $t<t_0$.
\end{rem}

\begin{rem}
By scaling, theorem \ref{th:main} extends to $Q_c$ for all $c>0$ (see corollary \ref{th:uac} at the end of the paper).
\end{rem}

The paper is organized as follows. In the next section we recall some properties of the solitons, and in particular we recall the proof of their orbital instability when $p>5$. This result is well known \cite{bss}, but our proof with an explicit initial data is useful to introduce some suitable tools to the study of solitons of \eqref{eq:gKdV} (as modulation, Weinstein's functional, monotonicity, linearized equation, etc.). Moreover, it is the first step to construct \emph{one} special solution in section \ref{sec:compactness} by compactness, similarly as Martel and Merle \cite{martel:general}. This proof does not use the precise analysis of the spectrum of $\L$ due to Pego and Weinstein \cite{pegoweinstein}, and so can be hopefully adapted to equations for which the spectrum of the linearized operator is not well known. To fully prove theorem \ref{th:main} (existence and uniqueness of a family of special solutions, section \ref{sec:contraction}), we rely on the method introduced in \cite{duymerle} and \cite{duy}.

\begin{ack}
The author would like to thank Nikolay Tzvetkov for suggesting the problem studied in this work, and for pointing out to him reference \cite{pegoweinstein}. He would also like to thank Luc Robbiano and Yvan Martel for their constructive remarks.
\end{ack}

\section{Preliminary results}

We recall here some well known properties of the solitons and some results of stability around the solitons. We begin by recalling notation and simple facts on the functions $Q(x)$ and $Q_c(x)=c^{\frac{1}{p-1}}Q(\sqrt cx)$ defined in section \ref{subsec:gKdV}.

\begin{notation}
They are available in the whole paper.
\begin{enumerate}[(a)]
\item $(\cdot,\cdot)$ denotes the $L^2(\R)$ scalar product, and $\bot$ the orthogonality with respect to $(\cdot,\cdot)$.
\item The Sobolev space $H^s$ is defined by $H^s(\R) = \{ u\in \mathcal{D}'(\R)\ |\ {(1+\xi^2)}^{s/2}\hat{u}(\xi) \in L^2(\R) \}$, and in particular $H^1(\R) = \{ u\in L^2(\R)\ |\ \nh{u}^2 = \nld{u}^2 +\nld{u'}^2 <+\infty \} \hookrightarrow L^{\infty}(\R)$.
\item We denote $\drondx v = \partial_x v = v_x$ the partial derivative of $v$ with respect to $x$, and $\partial_x^s = \partial^s$ the $s$-order partial derivative with respect to $x$ when no confusion is possible.
\item All numbers $C,K$ appearing in inequalities are real constants (with respect to the context) strictly positive, which may change in each step of an inequality.
\end{enumerate}
\end{notation}

\begin{claim} \label{th:solitons}
For all $c>0$, one has:
\begin{enumerate}[(i)]
\item $Q_c>0$, $Q_c$ is even, $Q_c$ is $C^{\infty}$, and $Q'_c(x)<0$ for all $x>0$.
\item There exist $K_1,K_2>0$ such that: $\forall x\in\R,\quad K_1e^{-\sqrt{c}|x|}\leq Q_c(x)\leq K_2e^{-\sqrt{c}|x|}$.
\item There exists $C_p>0$ such that for all $j\geq 0$, $Q_c^{(j)}(x)\sim C_p e^{-\sqrt c|x|}$ when $|x|\to+\infty$.

    In particular, for all $j\geq 1$, there exists $C_j>0$ such that: $\forall x\in\R,\quad |Q_c^{(j)}(x)|\leq C_j e^{-\sqrt{c}|x|}$.
\item The following identities hold: \begin{equation} \label{eq:Qcmass} \int Q_c^2 = c^{\frac{5-p}{2(p-1)}} \int Q^2\quad ,\quad \int \Carre{Q'_c} = c^{\frac{p+3}{2(p-1)}}\int Q'^2. \end{equation}
\end{enumerate}
\end{claim}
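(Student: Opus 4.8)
The plan is to reduce every item to the explicit formula $Q(x)=\left(\frac{p+1}{2}\right)^{\frac{1}{p-1}}\left(\cosh\left(\frac{p-1}{2}x\right)\right)^{-\frac{2}{p-1}}$ recalled above, together with the scaling identity $Q_c(x)=c^{\frac{1}{p-1}}Q(\sqrt c\,x)$; no ODE analysis is needed. For (i): since $t\mapsto\cosh t$ is even, $C^\infty$, bounded below by $1$ and strictly increasing on $(0,+\infty)$, the formula immediately gives that $Q>0$, $Q$ is even and $C^\infty$, and $Q$ is strictly decreasing on $(0,+\infty)$, i.e. $Q'(x)<0$ for $x>0$. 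Each of these properties is preserved by the affine substitution $x\mapsto\sqrt c\,x$ and by multiplication by the positive constant $c^{\frac{1}{p-1}}$, hence passes to $Q_c$.

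For (ii) and (iii), I would write, for $x\geq 0$, $\cosh\left(\frac{p-1}{2}x\right)=\frac12 e^{\frac{p-1}{2}x}\left(1+e^{-(p-1)x}\right)$, so that, using $\frac{2}{p-1}\cdot\frac{p-1}{2}=1$, one gets $Q(x)=C_0\,e^{-x}g(x)$ where $C_0>0$ depends only on $p$ and $g(x)=\left(1+e^{-(p-1)x}\right)^{-\frac{2}{p-1}}$ satisfies $2^{-2/(p-1)}\leq g\leq 1$ on $[0,+\infty)$. This yields $K_1 e^{-x}\leq Q(x)\leq K_2 e^{-x}$ for $x\geq 0$, hence for all $x$ by evenness, and rescaling gives (ii) for $Q_c$ (the constants now also depending on $c$, which is permitted). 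Moreover $g$ is bounded and $C^\infty$ on $[0,+\infty)$ with $g(x)\to 1$ and $g^{(k)}(x)=O(e^{-(p-1)x})$ for every $k\geq 0$ as $x\to+\infty$, so Leibniz's formula gives, for each fixed $j\geq 0$,
\[ Q^{(j)}(x)=C_0 e^{-x}\sum_{k=0}^{j}\binom{j}{k}(-1)^{j-k}g^{(k)}(x)=(-1)^j C_0 e^{-x}+O\!\left(e^{-px}\right)\quad\text{as }x\to+\infty, \]
and the analogue as $x\to-\infty$ follows from evenness via $Q^{(j)}(-x)=(-1)^jQ^{(j)}(x)$; in particular $|Q^{(j)}(x)|\sim C_0 e^{-|x|}$. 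Differentiating the scaling identity, $Q_c^{(j)}(x)=c^{\frac{1}{p-1}+\frac{j}{2}}Q^{(j)}(\sqrt c\,x)$, which gives the asymptotics (iii) (with limiting constant $\pm c^{\frac{1}{p-1}+\frac j2}C_0$) and, combined with the boundedness and continuity of $Q_c^{(j)}$ on compact sets, the pointwise bound $|Q_c^{(j)}(x)|\leq C_j e^{-\sqrt c|x|}$.

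For (iv), the identities \eqref{eq:Qcmass} are straightforward changes of variables $y=\sqrt c\,x$: from $Q_c(x)=c^{\frac{1}{p-1}}Q(\sqrt c\,x)$ one gets $\int Q_c^2=c^{\frac{2}{p-1}-\frac12}\int Q^2=c^{\frac{5-p}{2(p-1)}}\int Q^2$, and since $Q_c'(x)=c^{\frac{1}{p-1}+\frac12}Q'(\sqrt c\,x)$, one gets $\int (Q_c')^2=c^{\frac{2}{p-1}+\frac12}\int Q'^2=c^{\frac{p+3}{2(p-1)}}\int Q'^2$. Everything here is routine; the only mildly technical step is the control of all derivatives of $g$ at $+\infty$ entering (iii) — this can alternatively be done by bootstrapping on the relation $Q''=Q-Q^p$, which forces $e^xQ(x)$ and each $e^xQ^{(j)}(x)$ to converge as $x\to+\infty$, but with the explicit formula it is immediate.
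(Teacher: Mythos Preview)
The paper does not actually prove this claim; it is stated as a list of standard facts about the explicit soliton and left without argument. Your verification from the closed formula for $Q$ together with the scaling $Q_c(x)=c^{1/(p-1)}Q(\sqrt c\,x)$ is correct and is exactly the natural way to justify these items.

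One small slip worth fixing: you write ``$g^{(k)}(x)=O(e^{-(p-1)x})$ for every $k\geq 0$'', which is false for $k=0$ since $g\to 1$; what you mean (and what your Leibniz computation actually uses) is $g(x)-1=O(e^{-(p-1)x})$ and $g^{(k)}(x)=O(e^{-(p-1)x})$ for $k\geq 1$. With that correction the expansion $Q^{(j)}(x)=(-1)^jC_0 e^{-x}+O(e^{-px})$ as $x\to+\infty$ is fully justified, and the rest of (iii) follows as you say. Note also that, as you implicitly observe, the paper's phrasing ``$Q_c^{(j)}(x)\sim C_p e^{-\sqrt c|x|}$ with $C_p>0$'' is to be read up to a sign depending on $j$ and on the side $x\to\pm\infty$; your computation makes this precise.
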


\subsection{Weinstein's functional linearized around $Q$} \label{subsec:linear}

We introduce here the Weinstein's functional $F$ and give an expression of $F(Q+a)$ for $a$ small which will be very useful in the rest of the paper. We recall first that the energy of a function $\varphi\in H^1$ is defined by $E(\varphi) = \frac{1}{2}\int (\partial_x\varphi)^2-\frac{1}{p+1}\int \varphi^{p+1}$.

\begin{defi}
Weinstein's functional is defined for $\varphi\in H^1$ by $F(\varphi) = E(\varphi)+\frac{1}{2}\int \varphi^2$.
\end{defi}

\begin{claim}
If $u_0\in H^1$ and $u(t)$ solves \eqref{eq:gKdV} with $u(0)=u_0$, then for all $t\in [0,T^*)$, $F(u(t))=F(u_0)$. It is an immediate consequence of \eqref{eq:mass} and \eqref{eq:energy}.
\end{claim}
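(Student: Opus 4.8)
The plan is simply to add the two conservation laws already recorded in the excerpt. By definition of Weinstein's functional, $F(u(t)) = E(u(t)) + \frac12\int u^2(t)$ for every $t\in[0,T^*)$. I would invoke the energy conservation \eqref{eq:energy} to write $E(u(t)) = E(u_0)$, and the mass conservation \eqref{eq:mass} to write $\int u^2(t) = \int u_0^2$; summing these two identities gives
\[ F(u(t)) = E(u(t)) + \frac12\int u^2(t) = E(u_0) + \frac12\int u_0^2 = F(u_0), \]
which is the claim. So the statement is indeed immediate once \eqref{eq:mass} and \eqref{eq:energy} are granted, and nothing beyond linearity of the integral is needed.

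The only point deserving a remark — and the closest thing to an obstacle, though a standard one — is that \eqref{eq:mass} and \eqref{eq:energy} are stated as \emph{formal} conservation laws, so one should indicate why they hold for $H^1$ solutions in the sense of \cite{kpv}. The usual argument is: for $u_0\in H^s$ with $s$ large enough, persistence of regularity (recalled in Section~\ref{subsec:gKdV}) lets one differentiate $\int u^2(t)$ and $E(u(t))$ in time and integrate by parts using \eqref{eq:gKdV} to get $\frac{d}{dt}\int u^2 = 0$ and $\frac{d}{dt}E(u) = 0$; then one approximates a general $u_0\in H^1$ by smooth data $u_{0,n}\to u_0$ in $H^1$, uses continuity of the flow map in $H^1$ together with continuity of the mass and of $E$ on $H^1$ (the latter via $H^1(\R)\hookrightarrow L^\infty(\R)$ and the Gagliardo--Nirenberg inequality \eqref{eq:GN} to control $\int u^{p+1}$), and passes to the limit. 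Since the excerpt already takes \eqref{eq:mass}--\eqref{eq:energy} as known, I would not reproduce this and would simply cite them, exactly as the author suggests.
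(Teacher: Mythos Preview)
Your proposal is correct and matches the paper's approach exactly: the paper gives no separate proof beyond the sentence ``It is an immediate consequence of \eqref{eq:mass} and \eqref{eq:energy}'', and you have simply unpacked that sentence. Your additional remark on justifying the formal conservation laws for $H^1$ solutions is accurate and standard, though the paper does not include it.
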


\begin{lem}[Weinstein's functional linearized around $Q$] \label{th:Flin}
For all $C>0$, there exists $C'>0$  such that, for all $a\in H^1$ verifying $\nh{a}\leq C$, \begin{equation} \label{eq:Flin} F(Q+a) = F(Q)+\frac{1}{2}(La,a)+K(a)\end{equation} where $La=-\partial_x^2 a+a-pQ^{p-1}a$, and $K : H^1\to\R$ satisfies $|K(a)|\leq C'\nh{a}^3$.
\end{lem}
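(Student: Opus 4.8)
The lemma is a second-order Taylor expansion of $F$ at $Q$, so the plan is simply to expand $F(Q+a)$ and collect terms by homogeneity degree in $a$. Writing $F(\varphi)=\frac12\int(\partial_x\varphi)^2+\frac12\int\varphi^2-\frac1{p+1}\int\varphi^{p+1}$, the two quadratic pieces expand exactly: $\frac12\int(Q'+a')^2=\frac12\int Q'^2+\int Q'a'+\frac12\int a'^2$ and $\frac12\int(Q+a)^2=\frac12\int Q^2+\int Qa+\frac12\int a^2$. Since $p\ge 2$ is an integer, $p+1$ is a positive integer and the binomial theorem gives the \emph{finite} identity $(Q+a)^{p+1}=Q^{p+1}+(p+1)Q^pa+\tfrac{p(p+1)}{2}Q^{p-1}a^2+\sum_{k=3}^{p+1}\binom{p+1}{k}Q^{p+1-k}a^k$, with no convergence issue and no Taylor remainder to estimate a posteriori.

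Substituting and grouping: the degree-zero terms are exactly $F(Q)$; the degree-one term is $\int Q'a'+\int Qa-\int Q^pa$, which after one integration by parts equals $\int(-Q''+Q-Q^p)a$ and hence vanishes, since $Q$ solves $Q''+Q^p=Q$; the degree-two terms are $\frac12\int a'^2+\frac12\int a^2-\frac p2\int Q^{p-1}a^2$, which, integrating $\int a'^2$ by parts, is precisely $\frac12(La,a)$ with $La=-\partial_x^2a+a-pQ^{p-1}a$. The integrations by parts are legitimate because $a\in H^1(\R)$ vanishes at $\pm\infty$ while $Q'$ and $Q''$ decay exponentially by claim \ref{th:solitons}, so all boundary terms are zero. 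What is left over is exactly $K(a)=-\frac1{p+1}\sum_{k=3}^{p+1}\binom{p+1}{k}\int Q^{p+1-k}a^k$.

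It then remains to check $|K(a)|\le C'\nh a^3$. For each $k$ with $3\le k\le p+1$ one has $Q^{p+1-k}\in L^\infty(\R)$, so $\bigl|\int Q^{p+1-k}a^k\bigr|\le\nli{Q}^{\,p+1-k}\,\nli a^{\,k-3}\int|a|^3$. By the Sobolev embedding $H^1(\R)\hookrightarrow L^\infty(\R)$ we get $\nli a\le C\nh a$ and $\int|a|^3\le\nli a\,\nld a^2\le C\nh a^3$, whence $\bigl|\int Q^{p+1-k}a^k\bigr|\le C\nh a^k$; using $\nh a\le C$ this is $\le C\,C^{k-3}\nh a^3$. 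Summing the finitely many terms gives $|K(a)|\le C'\nh a^3$ with $C'$ depending only on $p$, $C$ and $\nli Q$. There is no genuine difficulty here; the only points deserving care are the vanishing of the boundary terms in the integrations by parts and the elementary bookkeeping of the remainder, both disposed of above.
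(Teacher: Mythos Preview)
Your proof is correct and follows essentially the same approach as the paper's own proof: expand $F(Q+a)$ via the binomial theorem, use the equation $Q''+Q^p=Q$ to kill the linear-in-$a$ terms, identify the quadratic part as $\tfrac12(La,a)$, and bound the cubic-and-higher remainder using the embedding $H^1(\R)\hookrightarrow L^\infty(\R)$. The paper is slightly terser (it bounds $|R(a)|\le C|a|^3$ pointwise and integrates), but the content is identical.
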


\begin{proof}
Let $a\in H^1$ be such that $\nh{a}\leq C$. Then we have \begin{align*} E(Q+a) &= \frac{1}{2}\int \carre{(Q'+\partial_x a)} -\frac{1}{p+1}\int \puiss{(Q+a)}{p+1}\\ &= E(Q)+ \frac{1}{2}\int \Carre{\partial_x a} +\int Q'\cdot \partial_x a -\frac{1}{p+1}\int \left[ (p+1)Q^p a + \frac{(p+1)p}{2}Q^{p-1} a^2 +R(a)\right]\\ &= E(Q)+ \frac{1}{2}\int \Carre{\partial_x a} -\int Q a -\frac{p}{2}\int Q^{p-1}a^2 -\frac{1}{p+1}\int R(a) \end{align*} since $Q''+Q^p=Q$, and where $R(a) = \sum_{k=3}^{p+1} \binom{p+1}{k} Q^{p+1-k}a^k$. Since $\ninf{a}\leq C\nh{a}\leq C$, then $|R(a)|\leq C|a|^3\leq C\ninf{a}|a|^2$, and so $K(a)=-\frac{1}{p+1}\int R(a)$ verifies $|K(a)|\leq C'\nh{a}^3$. Moreover, we have more simply: $\int \carre{(Q+a)} = \int Q^2 +\int a^2 +2\int Qa$. Finally we have \[F(Q+a) = F(Q)+ \frac{1}{2}\int a^2+ \frac{1}{2}\int \Carre{\partial_x a} -\frac{p}{2}\int Q^{p-1}a^2 +K(a). \qedhere \]
\end{proof}

\begin{claim}[Properties of $L$] \label{th:L}
The operator $L$ defined in lemma \ref{th:Flin} is self-adjoint and satisfies the following properties:
\begin{enumerate}[(i)]
\item First eigenfunction: $L\Qp = -\lambda_0\Qp$ where $\lambda_0=\frac{1}{4}(p-1)(p+3)>0$.
\item Second eigenfunction: $LQ' = 0$, and $\ker L = \{ \lambda Q'\ ;\ \lambda\in\R\}$.
\item Scaling: If we denote $S = {\left. \frac{dQ_c}{dc} \right|}_{c=1}$, then $S(x) = \frac{1}{p-1}Q(x) +\frac{1}{2}xQ'(x)$ and $LS = -Q$.
\item Coercivity: There exists $\sigma_0>0$ such that for all $u\in H^1(\R)$ verifying $(u,Q')= (u,\Qp)=0$, one has $(Lu,u)\geq \sigma_0 \nld{u}^2$.
\end{enumerate}
\end{claim}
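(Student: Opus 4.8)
The plan is to treat the four items separately: (i)--(iii) reduce to explicit computations starting from the profile equation $Q''+Q^p=Q$, while (iv) is a consequence of one-dimensional Schr\"odinger spectral theory. Since $pQ^{p-1}$ is smooth, bounded, and tends to $0$ at infinity by claim~\ref{th:solitons}, the multiplication operator $pQ^{p-1}$ is bounded and $(-\partial_x^2+1)$-compact; hence $L=-\partial_x^2+1-pQ^{p-1}$ is self-adjoint on $L^2(\R)$ with domain $H^2(\R)$ and form domain $H^1(\R)$, and $\sigma_{\mathrm{ess}}(L)=[1,+\infty)$ by Weyl's theorem.

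For (i), I would set $w=\Qp$ and compute $w''$ directly, using the first integral $(Q')^2=Q^2-\frac{2}{p+1}Q^{p+1}$ (obtained by multiplying $Q''+Q^p=Q$ by $Q'$ and integrating, the constant vanishing by the decay of $Q$) together with $Q''=Q-Q^p$; after simplification this gives $w''=\frac{(p+1)^2}{4}w-pQ^{p-1}w$, hence $Lw=\bigl(1-\frac{(p+1)^2}{4}\bigr)w=-\lambda_0 w$ with $\lambda_0=\frac14(p-1)(p+3)$. As $w>0$, it is the ground state, so $-\lambda_0=\inf\sigma(L)$ and it is a simple eigenvalue. For (ii), differentiating $Q''+Q^p=Q$ in $x$ gives $Q'''+pQ^{p-1}Q'=Q'$, i.e.\ $LQ'=0$; since $Q$ is even and strictly decreasing on $(0,+\infty)$ by claim~\ref{th:solitons}, $Q'$ vanishes exactly once, so Sturm oscillation theory forces $0$ to be the second eigenvalue of $L$ and simple, whence $\ker L=\R Q'$. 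For (iii), each $Q_c$ solves $Q_c''+Q_c^p=cQ_c$; differentiating this identity in $c$ at $c=1$ yields $S''+pQ^{p-1}S=Q+S$, i.e.\ $LS=-Q$, and differentiating the explicit formula $Q_c(x)=c^{1/(p-1)}Q(\sqrt c\,x)$ at $c=1$ gives $S=\frac{1}{p-1}Q+\frac12 xQ'$.

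For (iv), items (i)--(ii) show that $L$ has a single negative eigenvalue $-\lambda_0$ (simple, eigenfunction $w=\Qp$) and that $0$ is a simple eigenvalue (eigenfunction $Q'$); moreover, since $0<1=\inf\sigma_{\mathrm{ess}}(L)$, the eigenvalue $0$ belongs to the discrete spectrum and is therefore isolated, so $\mu_0:=\inf\bigl(\sigma(L)\setminus\{-\lambda_0,0\}\bigr)>0$. Eigenfunctions of a self-adjoint operator for distinct eigenvalues are orthogonal, so $w\perp Q'$; thus the two conditions $(u,Q')=(u,\Qp)=0$ remove exactly the non-positive spectral subspace of $L$, and the spectral theorem gives $(Lu,u)\geq\mu_0\,\nld{u}^2$ for every such $u\in H^1(\R)$, so $\sigma_0=\mu_0$ works.

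The only genuinely delicate point is (iv): one must be certain that the non-positive spectral subspace of $L$ is exactly two-dimensional---so that the two stated orthogonality conditions suffice---and that $0$ is isolated in $\sigma(L)$---so that the bound is a positive multiple of $\nld{u}^2$ and not merely strict positivity. Both rest on the Sturm--Liouville structure (the lowest eigenvalue is simple because $w>0$, and $0$ is the second eigenvalue because $Q'$ has exactly one zero) and on $\sigma_{\mathrm{ess}}(L)=[1,+\infty)$. The remaining items are routine bookkeeping with the profile equation.
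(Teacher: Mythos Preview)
Your proof is correct and follows essentially the same approach as the paper's own proof, which simply says that (i)--(iii) are straightforward computations (with a reference to \cite{weinstein:modulational} for $\ker L$) and that (iv) follows from (i), (ii) and classical self-adjoint/Sturm--Liouville theory; you have just supplied the details the paper omits.
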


\begin{proof}
The first three properties follow from straightforward computation, except for $\ker L$ which can be determined by ODE techniques, see \cite[proposition 2.8]{weinstein:modulational}. The property of coercivity follows easily from (i), (ii) and classical results on self-adjoint operators and Sturm-Liouville theory.
\end{proof}

\begin{lem} \label{th:coercif}
There exist $K_1,K_2>0$ such that for all $\eps\in H^1$ verifying $\eps\bot Q'$: \[(L\eps,\eps) = \int\eps_x^2 +\int\eps^2 -p\int Q^{p-1}\eps^2 \geq K_1\nh{\eps}^2 -K_2\Carre{\int \eps \Qp}. \]
\end{lem}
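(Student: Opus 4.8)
The plan is to deduce the "almost coercivity" of Lemma~\ref{th:coercif} from the full coercivity of Claim~\ref{th:L}(iv) by correcting $\eps$ so that it becomes orthogonal to $\Qp$ as well as to $Q'$. So first I would decompose a general $\eps \in H^1$ with $\eps \bot Q'$ as
\[
\eps = \wt\eps + \mu\, \Qp, \qquad \mu = \frac{(\eps,\Qp)}{\nld{\Qp}^2},
\]
where $\wt\eps \bot \Qp$ by construction. I still need $\wt\eps \bot Q'$: since $\eps \bot Q'$ and, by parity, $(\Qp,Q')=0$ (because $\Qp$ is even and $Q'$ is odd by Claim~\ref{th:solitons}(i)), we indeed get $\wt\eps \bot Q'$, so Claim~\ref{th:L}(iv) applies to $\wt\eps$: $(L\wt\eps,\wt\eps) \geq \sigma_0 \nld{\wt\eps}^2$.

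Next I would expand $(L\eps,\eps)$ using bilinearity and the eigenrelation $L\Qp = -\lambda_0 \Qp$ from Claim~\ref{th:L}(i). Because $L$ is self-adjoint and $\wt\eps \bot \Qp$, the cross terms collapse:
\[
(L\eps,\eps) = (L\wt\eps,\wt\eps) + 2\mu (L\wt\eps,\Qp) + \mu^2 (L\Qp,\Qp)
= (L\wt\eps,\wt\eps) - 2\mu\lambda_0 (\wt\eps,\Qp) - \mu^2 \lambda_0 \nld{\Qp}^2,
\]
and the middle term vanishes. Hence $(L\eps,\eps) = (L\wt\eps,\wt\eps) - \lambda_0 \mu^2 \nld{\Qp}^2 \geq \sigma_0 \nld{\wt\eps}^2 - \lambda_0 \mu^2 \nld{\Qp}^2$. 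Since $\mu^2 \nld{\Qp}^2 = (\eps,\Qp)^2 / \nld{\Qp}^2$, the negative term is exactly of the form $-K_2 (\int \eps\, \Qp)^2$, which matches the desired right-hand side; the constant $K_2$ is $\lambda_0 / \nld{\Qp}^2$.

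It remains to convert the lower bound $\sigma_0 \nld{\wt\eps}^2$ into a bound involving the full $H^1$ norm $\nh{\eps}^2$. First, $\nld{\wt\eps}^2 = \nld{\eps}^2 - \mu^2 \nld{\Qp}^2$, so from the inequality above
\[
(L\eps,\eps) \geq \sigma_0 \nld{\eps}^2 - (\sigma_0 + \lambda_0) \mu^2 \nld{\Qp}^2,
\]
giving control of the $L^2$ part of $\eps$ at the cost of an enlarged (but still admissible) constant in front of $(\int\eps\,\Qp)^2$. To bring in $\nld{\eps_x}^2$, I would combine this with the trivial identity $(L\eps,\eps) = \nld{\eps_x}^2 + \nld{\eps}^2 - p\int Q^{p-1}\eps^2$ and the crude bound $p\int Q^{p-1}\eps^2 \leq p\ninf{Q^{p-1}} \nld{\eps}^2 \leq C\nld{\eps}^2$: writing $(L\eps,\eps) = \theta (L\eps,\eps) + (1-\theta)(L\eps,\eps)$ for a small $\theta \in (0,1)$, bounding the first copy below by $\theta\big(\nld{\eps_x}^2 - C\nld{\eps}^2\big)$ and the second by $(1-\theta)\big(\sigma_0\nld{\eps}^2 - C'\mu^2\nld{\Qp}^2\big)$, and choosing $\theta$ small enough that $(1-\theta)\sigma_0 - \theta C \geq \tfrac12 \min(\theta, (1-\theta)\sigma_0) =: K_1$ stays positive, I obtain
\[
(L\eps,\eps) \geq K_1\big(\nld{\eps_x}^2 + \nld{\eps}^2\big) - K_2\Carre{\int\eps\,\Qp} = K_1\nh{\eps}^2 - K_2\Carre{\int\eps\,\Qp},
\]
as claimed. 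I do not anticipate a genuine obstacle here; the only mildly delicate point is the bookkeeping of constants when trading the clean coercivity on $\wt\eps$ for $H^1$-coercivity on $\eps$, and checking that all the error terms generated are proportional to $(\int\eps\,\Qp)^2$ and hence can be absorbed into the single constant $K_2$. The parity fact $(\Qp,Q')=0$ is the one structural input that makes the orthogonal correction legitimate.
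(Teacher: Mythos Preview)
Your proof is correct and follows essentially the same strategy as the paper's: decompose $\eps$ orthogonally along $\Qp$ (using $(\Qp,Q')=0$ by parity so that the corrected part remains orthogonal to $Q'$), apply the coercivity of Claim~\ref{th:L}(iv), and upgrade the resulting $L^2$ bound to an $H^1$ bound via a convex combination with the identity $(L\eps,\eps)=\nld{\eps_x}^2+\nld{\eps}^2-p\int Q^{p-1}\eps^2$. The only cosmetic difference is the order: the paper first upgrades $L^2$ to $H^1$ under both orthogonality conditions and then removes the condition $\eps\bot\Qp$ by the same decomposition, whereas you decompose first and upgrade afterwards; either order works and the constants come out the same way.
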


\begin{proof}
By claim \ref{th:L}, we already know that there exists $\sigma_0>0$ such that for all $\eps$ satisfying $\eps\bot\Qp$ and $\eps\bot Q'$, we have $(L\eps,\eps)\geq \sigma_0 \nld{\eps}^2$. The first step is to replace the $L^2$ norm by the $H^1$ one in this last inequality, which is easy if we choose $\sigma_0$ small enough. If we do not suppose $\eps\bot\Qp$, we write $\eps=\eps_1+a\Qp$ with $a=(\int\eps\Qp) \Puiss{\int Q^{p+1}}{-1}$ such that $\eps_1\bot\Qp$ for the $L^2$ scalar product, but also for the bilinear form $(L\cdot,\cdot)$ since $\Qp$ is an eigenvector for $L$. Since $\Qp\bot Q'$, we obtain easily the desired inequality from the previous step.
\end{proof}

\subsection{Orbital stability and decomposition of a solution around $Q$}

In this paper, we consider only solutions which stay close to a soliton. So it is important to define properly this notion, and the invariance by translation leads us to consider for $\eps>0$ the "tube" \[U_{\eps} = \{ u\in H^1\ |\ \inf_{y\in\R} \nh{u-Q_c(\cdot-y)} \leq\eps\}. \]

\begin{defi} \label{th:orbital}
The solitary wave $Q_c$ is (orbitally) \emph{stable} if and only if for every $\eps>0$, there exists $\delta>0$ such that if $u_0\in U_{\delta}$, then the associated solution $u(t)\in U_{\eps}$ for all $t\in\R$. The solitary wave $Q_c$ is \emph{unstable} if $Q_c$ is not stable.
\end{defi}

\begin{theo}
$Q_c$ is stable if and only if $p<5$.
\end{theo}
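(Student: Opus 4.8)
The plan is to split the equivalence into its two implications, using throughout the rescaled Weinstein functional $\mathcal{E}_c(v)=E(v)+\frac{c}{2}\int v^2$, whose first variation vanishes at $Q_c$ (because $Q_c''+Q_c^p=cQ_c$) and whose Hessian at $Q_c$ is $L_cv=-v''+cv-pQ_c^{p-1}v$, the natural rescaling of the operator $L$ of lemma~\ref{th:Flin}. In particular claim~\ref{th:L} transfers verbatim to $L_c$: it has a single simple negative eigenvalue, $\ker L_c=\R Q_c'$, and $L_c\big(\frac{d}{dc}Q_c\big)=-Q_c$. The whole dichotomy is governed by the sign of $\frac{d}{dc}\int Q_c^2$, which by \eqref{eq:Qcmass} equals $\frac{5-p}{2(p-1)}\,c^{\frac{5-p}{2(p-1)}-1}\int Q^2$, hence is positive for $p<5$, negative for $p>5$, and zero exactly at the critical exponent $p=5$.

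\emph{Stability when $p<5$.} Given $u_0$ close to $Q_{c_0}$, I would first use the conservation of mass \eqref{eq:mass} to \emph{fix the scale}: since $p\neq5$, the map $c\mapsto\int Q_c^2$ is a local diffeomorphism near $c_0$, so there is a unique $c$ near $c_0$ with $\int Q_c^2=\int u_0^2$, and this $c$ is independent of time. As long as $u(t)$ stays in a small tube, modulation theory provides a $C^1$ decomposition $u(t)=Q_c(\cdot-y(t))+\eps(t)$ with $\eps(t)\bot\partial_xQ_c(\cdot-y(t))$; the mass constraint then forces $(Q_c(\cdot-y(t)),\eps(t))=-\frac12\nld{\eps(t)}^2$, so $\eps(t)$ is orthogonal to $Q_c(\cdot-y(t))$ up to a quadratic error. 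Expanding the conserved quantity $\mathcal{E}_c$ around $Q_c(\cdot-y(t))$ as in lemma~\ref{th:Flin} (after the obvious rescaling and translation) gives $\mathcal{E}_c(u(t))-\mathcal{E}_c(Q_c)=\frac12(L_c\eps(t),\eps(t))+O(\nh{\eps(t)}^3)$, and the left-hand side is the time-independent constant $\mathcal{E}_c(u_0)-\mathcal{E}_c(Q_c)$, which is as small as we wish by taking $u_0$ close to $Q_{c_0}$. It then remains to bound $\eps$ from below: the coercivity $(L_c\eps,\eps)\geq\lambda\nh{\eps}^2$ for $\eps\bot\partial_xQ_c$ and $\eps\bot Q_c$ holds exactly when $L_c$ restricted to this codimension-two subspace is positive definite, which by the classical self-adjoint criterion (applicable since $L_c$ has only one negative eigenvalue) is equivalent to $(L_c^{-1}Q_c,Q_c)<0$; and here $(L_c^{-1}Q_c,Q_c)=-\big(\frac{d}{dc}Q_c,Q_c\big)=-\frac12\frac{d}{dc}\int Q_c^2$, which is $<0$ precisely for $p<5$. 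Absorbing the quadratic defect in the $Q_c$-orthogonality (write $\eps=\wt\eps+aQ_c$ with $|a|=O(\nh{\eps}^2)$, $\wt\eps\bot Q_c,Q_c'$, using $(Q_c,Q_c')=0$) upgrades this to $(L_c\eps,\eps)\geq\frac{\lambda}{2}\nh{\eps}^2-C\nh{\eps}^3$. Comparing the two estimates gives $\nh{\eps(t)}^2\leq C\delta_0+C\nh{\eps(t)}^3$ with $\delta_0\to0$ as the initial distance tends to $0$, and a standard continuity argument in $t$ keeps $\nh{\eps(t)}$ and $|c-c_0|$ small for all $t$, hence $u(t)\in U_\eps$: stability.

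\emph{Instability when $p>5$.} Now $\frac{d}{dc}\int Q_c^2<0$, equivalently $L_{c_0}$ admits a direction $W\in\{Q_{c_0}'\}^\bot\cap\{Q_{c_0}\}^\bot$ with $(L_{c_0}W,W)<0$ (built, for instance, from $\frac{d}{dc}Q_c$, or from the negative eigenfunction $\Qp$ of claim~\ref{th:L}(i) after rescaling and projecting off $Q_{c_0}$). I would take the explicit initial data $u_0^n=Q_{c_0}+\frac1nW$, corrected by an $O(n^{-2})$ multiple of $Q_{c_0}$ so that $\int(u_0^n)^2=\int Q_{c_0}^2$; then $u_0^n\to Q_{c_0}$ in $H^1$ while $\mathcal{E}_{c_0}(u_0^n)<\mathcal{E}_{c_0}(Q_{c_0})$ for $n$ large. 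Assume for contradiction that the solution $u^n(t)$ stays in a fixed small tube $U_\eps$ for all $t\geq0$, and modulate $u^n(t)=Q_{c_0}(\cdot-y_n(t))+\eps_n(t)$ with $\eps_n\bot\partial_xQ_{c_0}(\cdot-y_n)$. Using the equation one derives, for a suitably chosen virial/monotonicity functional (a localized portion of the mass of $u^n$, of the kind standard in the analysis of \eqref{eq:gKdV}), a differential inequality whose coercive lower bound — made available precisely by the strict gap $\mathcal{E}_{c_0}(Q_{c_0})-\mathcal{E}_{c_0}(u^n)>0$ together with the coercivity of $L_{c_0}$ off the directions $W$ and $Q_{c_0}'$ — is incompatible with the functional remaining bounded on $U_\eps$ for all time. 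Since $\eps_n(0)$ can be taken arbitrarily small while the exit time stays uniform in $n$, this contradicts $u^n(t)\in U_\eps$ for all $t$, so $Q_{c_0}$ is unstable.

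\emph{Main obstacle.} In the stability direction the real content is the coercivity of $L_c$ on the codimension-two subspace and its equivalence with the sign of $\frac{d}{dc}\int Q_c^2$; everything else (the modulation, the continuity bootstrap) is routine. In the instability direction the genuine difficulty is the control of the nonlinear perturbed dynamics: setting up the monotonicity/virial functional for \eqref{eq:gKdV} and showing that the unstable linear behaviour cannot be compensated by the remainder $\eps_n(t)$, which a priori is only known to be small in $H^1$, so that the solution is indeed expelled from the tube.
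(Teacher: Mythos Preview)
Your stability argument for $p<5$ is the classical Weinstein Lyapunov argument and is correct; the paper does not reprove this direction but simply cites \cite{bss,weinstein:lyapunov}, so your outline is in fact more explicit than the paper on that side.

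The instability direction has a genuine gap at precisely the point you flag as the main obstacle. A ``localized portion of the mass'' is not what drives the contradiction; in the paper such quantities (proposition~\ref{th:expdecay}) are used only as an auxiliary tool, to show that the \emph{correct} functional stays bounded. The functional that actually does the work is
\[
J(t)=\int \eps(t,x)\,\zeta(x)\,dx,\qquad \zeta(x)=\int_{-\infty}^x\Big(S(y)+\beta\,\Qp(y)\Big)\,dy,
\]
with $S=\frac{d}{dc}Q_c\big|_{c=1}$ (so $LS=-Q$) and $\beta=-\big(\int QS\big)\big/\int Q^{\frac{p+3}{2}}$, which is strictly positive exactly because $\frac{d}{dc}\int Q_c^2<0$ when $p>5$. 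The modulation equation then gives $J'=\beta\lambda_0\int\eps\,\Qp+O(\nh{\eps}^2)$: the choice of $\beta$ kills the $(x'-1)\int Q\zeta'$ term coming from proposition~\ref{th:eqeps}, while $L\Qp=-\lambda_0\Qp$ produces the leading contribution. Coercivity (lemma~\ref{th:coercif}) combined with the strict gap $F(u_0)<F(Q)$ then forces $\big(\int\eps\,\Qp\big)^2\geq K\nh{\eps}^2+\kappa_n$ with $\kappa_n>0$, so $J'$ has a definite sign bounded away from zero, contradicting the boundedness of $J$. This specific design of $\zeta'$ --- pairing the scaling direction $S$ with the negative eigenfunction $\Qp$ of $L$ --- is the missing idea in your sketch; a generic localized-mass quantity will not close the argument because it does not isolate the unstable direction modulo the modulation constraints. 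Incidentally, the paper's initial data $u_{0,n}(x)=\lambda_nQ(\lambda_n^2x)$ with $\lambda_n=1+\frac{1}{n}$ achieves $\int u_{0,n}^2=\int Q^2$ exactly and $E(u_{0,n})<E(Q)$ by a short binomial computation, which is cleaner than perturbing along $W$ and then correcting the mass.
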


\begin{rem}
\begin{enumerate}
\item This theorem is proved by Bona \emph{et al.} \cite{bss} for $p\neq 5$ and by Martel and Merle \cite{martel:instability} for $p=5$. Nevertheless, we give an explicit proof of the instability of $Q$ when $p>5$ (\emph{i.e.} we exhibit an explicit sequence of initial data which contradicts the stability) which will be useful to construct the special solution by the compactness method (section \ref{sec:compactness}).
\item An important ingredient to prove this theorem is the following lemma of modulation close to $Q$. Its proof is based on the implicit function theorem (see for example \cite[lemma 4.1]{bss} for details). The orthogonality to $Q'$ obtained by this lemma will be of course useful to exploit the coercivity of the bilinear form $(L\cdot,\cdot)$. Finally, we conclude this section by a simple but useful lemma which describes the effect of small translations on $Q$.
\end{enumerate}
\end{rem}

\begin{lem}[Modulation close to $Q$] \label{th:decomp}
There exist $\eps_0>0$, $C>0$ and a unique $C^1$ map $\alpha~: U_{\eps_0}\vers\R$ such that for every $u\in U_{\eps_0}$, $\eps=u(\cdot+\alpha(u))-Q$ verifies \[ (\eps,Q')=0 \m{ and } \nh{\eps}\leq C\inf_{y\in\R} \nh{u-Q(\cdot-y)}\leq C\eps_0.\]
\end{lem}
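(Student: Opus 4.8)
The plan is to apply the implicit function theorem to a functional built from the orthogonality condition $(\eps,Q')=0$. First I would define $\Phi : H^1(\R)\times\R \to \R$ by $\Phi(u,\alpha) = (u(\cdot+\alpha)-Q, Q')$, which makes sense since $Q'\in H^1$ decays exponentially (claim \ref{th:solitons}). The key observation is that $\Phi(Q,0) = (Q-Q,Q') = 0$, and one computes $\partial_\alpha \Phi(u,\alpha) = (u'(\cdot+\alpha), Q')$, so that at the base point $\partial_\alpha \Phi(Q,0) = (Q',Q') = \nld{Q'}^2 > 0$. Since $\Phi$ is $C^1$ (the translation action $\alpha \mapsto u(\cdot+\alpha)$ is $C^1$ from $\R$ into $H^1$ when $u\in H^1$, with a little care, or one restricts first to $Q$-close data where this is clean), the implicit function theorem gives $\eps_1 > 0$ and a unique $C^1$ map $\alpha$ defined on the $H^1$-ball of radius $\eps_1$ around $Q$ such that $\Phi(u,\alpha(u)) = 0$ and $\alpha(Q) = 0$.

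Next I would upgrade this from a neighborhood of $Q$ to the whole tube $U_{\eps_0}$ using translation invariance. Given $u\in U_{\eps_0}$ with $\eps_0$ small (to be fixed), pick $y_0\in\R$ nearly realizing the infimum, so that $\nh{u(\cdot+y_0)-Q}$ is small, apply the local result to $u(\cdot+y_0)$ to get a modulation parameter, and then set $\alpha(u) = y_0 + (\text{that parameter})$; uniqueness of the local map together with the group structure of translations shows this is well-defined independently of the choice of $y_0$ and still $C^1$. Shrinking $\eps_0$ if necessary ensures we stay inside the region where the local map is defined. This yields a unique $C^1$ map $\alpha : U_{\eps_0}\to\R$ with $(\eps,Q')=0$ for $\eps = u(\cdot+\alpha(u))-Q$.

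For the quantitative bound $\nh{\eps} \leq C\inf_y \nh{u-Q(\cdot-y)}$, I would argue as follows. Write $d = \inf_y \nh{u-Q(\cdot-y)} \leq \eps_0$ and pick $y_*$ with $\nh{u-Q(\cdot-y_*)} \leq 2d$ (or $\leq d + \delta$ for small $\delta$, then let $\delta\to 0$). Then $\nh{\eps} \leq \nh{u(\cdot+\alpha(u)) - Q(\cdot+\alpha(u)-y_*)} + \nh{Q(\cdot+\alpha(u)-y_*)-Q} = \nh{u - Q(\cdot-y_*)} + \nh{Q(\cdot+\beta)-Q}$ where $\beta = \alpha(u)-y_*$. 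The first term is $\leq 2d$. For the second, one needs $|\beta|$ controlled by $d$: since $\alpha$ is $C^1$ with $\alpha(Q(\cdot-y_*)) = y_*$ (by uniqueness and equivariance), the mean value inequality gives $|\alpha(u) - y_*| \leq C\nh{u - Q(\cdot-y_*)} \leq 2Cd$, and then $\nh{Q(\cdot+\beta)-Q}\leq C|\beta|$ since $\beta\mapsto Q(\cdot+\beta)$ is Lipschitz into $H^1$ near $0$ (using $Q'' \in L^2$). Combining gives $\nh{\eps}\leq Cd$, and $d\leq\eps_0$ gives the final $\leq C\eps_0$.

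The main obstacle is the regularity of the translation action: the map $\alpha\mapsto u(\cdot+\alpha)$ is continuous $\R\to H^1$ for every $u\in H^1$ but is $C^1$ only when $u\in H^2$ (its derivative is $u'(\cdot+\alpha)$). Since $Q$ is smooth, differentiability at the base point is fine, but to invoke the implicit function theorem one wants $C^1$ dependence jointly, which is delicate for general $u\in H^1$. The standard fix — and what I expect the reference \cite[lemma 4.1]{bss} does — is to work not with the raw translation action but to set up the implicit relation so that the $H^1$ variable only enters through the $L^2$ pairing against the fixed smooth function $Q'$: then $\Phi(u,\alpha) = \int u(x+\alpha)Q'(x)\,dx = \int u(x)Q'(x-\alpha)\,dx$ is manifestly $C^1$ (even $C^\infty$) in $\alpha$ and linear continuous in $u$, so joint $C^1$-ness is immediate and the IFT applies cleanly. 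This is the formulation I would adopt, and with it the rest of the argument is routine.
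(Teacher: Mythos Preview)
Your proposal is correct and matches the paper's approach exactly: the paper does not give a proof but simply states that it ``is based on the implicit function theorem (see for example \cite[lemma 4.1]{bss} for details)'', and your sketch fills in precisely those details, including the standard reformulation $\Phi(u,\alpha)=\int u(x)Q'(x-\alpha)\,dx$ that sidesteps the regularity issue for the translation action.
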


\begin{lem} \label{th:Qmass}
There exist $h_0>0$, $A_0>0$ and $\beta>0$ such that: \begin{enumerate}[(i)] \item if $|h|\leq h_0$ then $\beta h^2 \leq \nh{Q-Q(\cdot+h)}^2 \leq 4\beta h^2$, \item if $|h|>h_0$ then $\nh{Q-Q(\cdot+h)}^2> A_0$. \end{enumerate}
\end{lem}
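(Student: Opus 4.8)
The plan is to study the function $g(h) := \nh{Q-Q(\cdot+h)}^2$ directly. Since translations are isometries of $H^1(\R)$, one has $g(h) = 2\big(\nh{Q}^2 - \phi(h)\big)$ where $\phi(h) := \big(Q,Q(\cdot+h)\big)_{H^1} = \int Q(x)Q(x+h)\,dx + \int Q'(x)Q'(x+h)\,dx = \phi(0)-\tfrac12 g(h)$. So the whole statement reduces to understanding $\phi$ near $h=0$ (for part (i)) and for large $|h|$ (for part (ii)).

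First I would establish the local behavior. By claim \ref{th:solitons}, $Q$ is $C^\infty$ and each $Q^{(j)}$ decays exponentially, so one may differentiate under the integral sign twice: $\phi\in C^2(\R)$ with $\phi'(h)=\int Q(x)Q'(x+h)\,dx+\int Q'(x)Q''(x+h)\,dx$ and $\phi''(h)=\int Q(x)Q''(x+h)\,dx+\int Q'(x)Q'''(x+h)\,dx$. Evaluating at $h=0$ and integrating by parts, $\phi'(0)=\tfrac12\int (Q^2)' + \tfrac12\int ((Q')^2)' = 0$ and $\phi''(0) = -\int (Q')^2 - \int (Q'')^2 < 0$. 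A Taylor expansion then gives $g(h) = -\phi''(0)\,h^2 + o(h^2)$ as $h\to 0$. Choosing $2\beta := -\phi''(0) = \nld{Q'}^2+\nld{Q''}^2 > 0$, there is $h_0>0$ such that $\beta h^2 \le g(h) \le 4\beta h^2$ for all $|h|\le h_0$ (the margin between the constants $1$, $2$ and $4$ absorbs the $o(h^2)$ term), which is exactly (i).

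Next I would handle large $|h|$, keeping the same $h_0$. The function $g$ is continuous on $\R$, and by dominated convergence (using the exponential decay of $Q$ and $Q'$) $\phi(h)\to 0$, hence $g(h)\to 2\nh{Q}^2 > 0$ as $|h|\to+\infty$. Moreover $g(h)>0$ for every $h\neq 0$: if $g(h_1)=0$ for some $h_1\neq 0$ then $Q$ would be $h_1$-periodic, which together with $Q(x)\to 0$ as $|x|\to\infty$ forces $Q\equiv 0$, contradicting $Q>0$. Consequently, fixing $M$ large enough that $g(h)\ge \nh{Q}^2$ for $|h|\ge M$, the continuous positive function $g$ attains a positive minimum on the compact set $\{h_0\le|h|\le M\}$; taking $A_0$ to be, say, half of the minimum of that value and $\nh{Q}^2$ gives $g(h) > A_0$ for all $|h|>h_0$, which is (ii).

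The only points requiring care — the closest thing to an obstacle here — are the justification of differentiation under the integral sign and of the two integrations by parts, both immediate from the exponential decay of $Q,Q',Q'',Q'''$ in claim \ref{th:solitons}, together with the elementary remark that a nonzero continuous function vanishing at infinity cannot be periodic. No spectral information about $L$ is needed for this lemma.
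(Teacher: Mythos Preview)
Your proof is correct and is exactly the approach the paper has in mind: the paper's one-line proof just says ``a simple application of Taylor's theorem to $f(a)=\nh{Q-Q(\cdot+a)}^2$'', and you have carefully filled in the details of that Taylor expansion (with $2\beta=\nld{Q'}^2+\nld{Q''}^2$) together with the standard compactness/limit argument for large $|h|$.
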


\begin{proof}
It is a simple application of Taylor's theorem to $f$ defined by $f(a)=\nh{Q-Q(\cdot+a)}^2$.
\end{proof}

\subsection{Instability of $Q$ for $p>5$}

In this section, we construct an explicit sequence $(u_{0,n})_{n\geq 1}$ of initial data which contradicts the stability of $Q$:

\begin{prop} \label{th:uzero}
Let $u_{0,n}(x)=\lambda_nQ(\lambda_n^2x)$ with $\lambda_n=1+\frac{1}{n}$ for $n\geq 1$. Then \begin{equation} \label{eq:uon} \int u_{0,n}^2 = \int Q^2\quad,\quad E(u_{0,n})<E(Q)\quad \m{ and }\quad \nh{u_{0,n}-Q}\tend 0. \end{equation}
\end{prop}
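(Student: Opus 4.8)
\emph{Proof proposal.} My plan is to verify the three assertions of \eqref{eq:uon} in turn; the mass identity and the $H^1$ convergence are routine, and the only point that needs a genuine argument is the strict energy inequality. First, the mass: since $u_{0,n}(x)=\lambda_n Q(\lambda_n^2 x)$, the change of variables $y=\lambda_n^2 x$ gives $\int u_{0,n}^2=\lambda_n^2\int Q(\lambda_n^2 x)^2\,dx=\int Q^2$ — this is just the fact that $v\mapsto\mu^{1/2}v(\mu\cdot)$ preserves the $L^2$ norm on $\R$, applied with $\mu=\lambda_n^2$. The same substitution yields $\int(\partial_x u_{0,n})^2=\lambda_n^4\int Q'^2$ and $\int u_{0,n}^{p+1}=\lambda_n^{p-1}\int Q^{p+1}$, hence
\[
E(u_{0,n})=\frac{\lambda_n^4}{2}\int Q'^2-\frac{\lambda_n^{p-1}}{p+1}\int Q^{p+1}.
\]

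The strict energy inequality is the crux, and is where supercriticality enters. Multiplying $Q''+Q^p=Q$ successively by $Q$ and by $xQ'$ and integrating over $\R$ (all boundary terms vanish by the exponential decay in claim \ref{th:solitons}) gives the two Pohozaev identities $\int Q^{p+1}=\int Q^2+\int Q'^2$ and $\int Q^2=\int Q'^2+\frac{2}{p+1}\int Q^{p+1}$, from which
\[
\int Q'^2=\frac{p-1}{p+3}\int Q^2,\qquad \int Q^{p+1}=\frac{2(p+1)}{p+3}\int Q^2 .
\]
Substituting into the formula above gives $E(u_{0,n})=\frac{1}{2(p+3)}\bigl(\int Q^2\bigr)\,g(\lambda_n)$ where $g(\lambda)=(p-1)\lambda^4-4\lambda^{p-1}$; in particular $E(Q)=\frac{1}{2(p+3)}\bigl(\int Q^2\bigr)\,g(1)$ with $g(1)=p-5$. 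Since $\int Q^2>0$, it suffices to prove $g(\lambda_n)<g(1)$. Now $g'(\lambda)=4(p-1)\lambda^3\bigl(1-\lambda^{\,p-5}\bigr)$, which is $<0$ for every $\lambda>1$ because $p>5$; thus $g$ is strictly decreasing on $[1,\infty)$, and since $\lambda_n=1+\frac1n>1$ we conclude $g(\lambda_n)<g(1)$, i.e. $E(u_{0,n})<E(Q)$. I expect this to be the main obstacle — not that it is hard, but one must first identify the correct scaling-invariant quantity $g$ and then notice that $g'(1)=0$: the soliton is a degenerate critical point of the energy at fixed mass, so the first-order term is useless and one genuinely needs the monotonicity of $g$ past $\lambda=1$, which fails (and in fact reverses sign) precisely when $p\le 5$.

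For the $H^1$ convergence, using the mass identity,
\[
\nld{u_{0,n}-Q}^2=2\int Q^2-2\int\lambda_n Q(\lambda_n^2 x)Q(x)\,dx,\qquad
\nld{\partial_x u_{0,n}-Q'}^2=(\lambda_n^4+1)\int Q'^2-2\int\lambda_n^3 Q'(\lambda_n^2 x)Q'(x)\,dx .
\]
For $n\ge 1$ one has $1<\lambda_n\le 2$, so $\lambda_n^2\ge 1$, and the bounds of claim \ref{th:solitons} (parts (ii) and (iii), with $c=1$) give $0\le\lambda_n Q(\lambda_n^2 x)Q(x)\le Ce^{-2|x|}$ and $|\lambda_n^3 Q'(\lambda_n^2 x)Q'(x)|\le Ce^{-2|x|}$ uniformly in $n$, while the integrands converge pointwise to $Q(x)^2$ and $Q'(x)^2$ as $\lambda_n\to 1$. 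Dominated convergence then gives $\int\lambda_n Q(\lambda_n^2\cdot)Q\to\int Q^2$ and $\int\lambda_n^3 Q'(\lambda_n^2\cdot)Q'\to\int Q'^2$, whence (using also $\lambda_n^4\to 1$) $\nld{u_{0,n}-Q}\to 0$ and $\nld{\partial_x u_{0,n}-Q'}\to 0$, that is $\nh{u_{0,n}-Q}\to 0$. Equivalently, this last point is just strong continuity at $\lambda=1$ of the scaling map $\lambda\mapsto\lambda Q(\lambda^2\cdot)$ into $H^1$.
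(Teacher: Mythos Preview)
Your proof is correct. The mass identity and the $H^1$ convergence are handled exactly as in the paper (substitution plus dominated convergence), and your treatment of the energy inequality is valid.

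For the energy inequality you take a cleaner route than the paper. The paper uses the single Pohozaev relation $2\int Q'^2=\frac{p-1}{p+1}\int Q^{p+1}$ to write
\[
E(u_{0,n})-E(Q)=\left[\frac{p-1}{4}(\lambda_n^4-1)-(\lambda_n^{p-1}-1)\right]\frac{1}{p+1}\int Q^{p+1},
\]
and then expands $\lambda_n^4-1$ and $\lambda_n^{p-1}-1$ by the binomial theorem in powers of $\frac{1}{n}$, reducing the sign to the combinatorial inequality $\binom{p-2}{k-1}>\binom{3}{k-1}$ for $k\in\{2,3,4\}$; this uses that $p$ is an integer. You instead normalize everything against $\int Q^2$, reduce to $g(\lambda)=(p-1)\lambda^4-4\lambda^{p-1}$, and observe that $g'(\lambda)=4(p-1)\lambda^3(1-\lambda^{p-5})<0$ for $\lambda>1$ when $p>5$. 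Your argument is shorter, makes the role of the threshold $p=5$ transparent (it is exactly where $g'$ changes sign past $\lambda=1$), and works for any real $p>5$ and any $\lambda>1$, not only $\lambda_n=1+\frac{1}{n}$ with $p$ integer. The paper's approach, on the other hand, stays closer to the explicit choice of $\lambda_n$ and avoids introducing the auxiliary function $g$; both are perfectly adequate here.
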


\begin{proof}
The first and the last facts are obvious thanks to substitutions and the dominated convergence theorem. For the energy inequality, we compute $E(u_{0,n}) = \frac{\lambda_n^4}{2}\int Q'^2 - \frac{\lambda_n^{p-1}}{p+1}\int Q^{p+1}$. But $2\int Q'^2 = \frac{p-1}{p+1} \int Q^{p+1}$ by Pohozaev identities, and so \begin{align*} E(u_{0,n})-E(Q) &= \left[ \frac{p-1}{4}\times(\lambda_n^4-1)-(\lambda_n^{p-1}-1)\right]\cdot \frac{1}{p+1}\int Q^{p+1}\\ &= \left[\sum_{k=2}^4 \left\{ \frac{p-1}{4}\binom{4}{k}-\binom{p-1}{k} \right\}\frac{1}{n^k} - \sum_{k=5}^{p-1} \binom{p-1}{k} \frac{1}{n^k} \right] \cdot \frac{1}{p+1}\int Q^{p+1}. \end{align*} To conclude, it is enough to show that $\binom{p-1}{k}>\frac{p-1}{4}\binom{4}{k}$ for $k\in\{2,3,4\}$, which is equivalent to show that $\binom{p-2}{k-1} = \frac{k}{p-1}\binom{p-1}{k} > \frac{k}{4}\binom{4}{k} = \binom{3}{k-1}$, which is right since $p>5$ and $k>1$.
\end{proof}

\begin{rem} \label{th:choixuon}
We do not really need to know the explicit expression of $u_{0,n}$ to prove the instability of $Q$: initial data satisfying conditions \eqref{eq:uon} and decay in space would fit. For example, we could have chosen $\lambda_n=1-\frac{1}{n}$, so that conditions \eqref{eq:uon} hold for $n$ large (in fact $E(u_{0,n})-E(Q) \sim \frac{(p-1)(5-p)}{2(p+1)}\int Q^{p+1}\cdot\frac{1}{n^2}<0$ as $n\to+\infty$ in this case).
\end{rem}

\begin{theo} \label{th:2}
Let $u_n$ be the solution associated to $u_{0,n}$ defined in proposition \ref{th:uzero}. Then \begin{equation} \label{eq:th2} \exists\delta>0, \forall n\geq 1, \exists T_n\in\R_+ \m{ such that } \inf_{y\in\R} \nh{u_n(T_n)-Q(\cdot-y)}>\delta. \end{equation}
\end{theo}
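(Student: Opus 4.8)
The plan is to argue by contradiction: suppose \eqref{eq:th2} fails, so that for every $\delta>0$ there is some $n$ with $u_n(t)\in U_\delta$ for all $t\geq 0$ (note $u_n$ is global forward in time since $\lambda_n>1$ forces $E(u_{0,n})<E(Q)$, and we will see this keeps the solution in a bounded region). Concretely, taking $\delta=\eps_0$ (the modulation threshold of lemma \ref{th:decomp}), we may pass to a subsequence along which $u_n(t)\in U_{\eps_0}$ for all $t\geq 0$; then we decompose $u_n(t,\cdot+\alpha(u_n(t))) = Q+\eps_n(t)$ with $\eps_n(t)\bot Q'$ and $\nh{\eps_n(t)}$ small. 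The strategy is to use conservation of $F$ together with the expansion of lemma \ref{th:Flin} to show that the smallness of $\eps_n$ at $t=0$ propagates, i.e. $\sup_{t\geq 0}\nh{\eps_n(t)}\to 0$ as $n\to\infty$, and then to derive a contradiction with the energy gap.

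First I would set up the modulation: by lemma \ref{th:decomp}, $\nh{\eps_n(0)}\leq C\nh{u_{0,n}-Q}\to 0$, so the $\eps_n(0)$ are uniformly small. For fixed $n$, as long as $u_n(t)\in U_{\eps_0}$ the decomposition holds and $F(u_n(t))=F(u_{0,n})$; applying \eqref{eq:Flin} to $a=\eps_n(t)$ gives
\[ F(u_{0,n}) = F(Q)+\tfrac12(L\eps_n(t),\eps_n(t))+K(\eps_n(t)),\qquad |K(\eps_n(t))|\leq C'\nh{\eps_n(t)}^3.\]
The key coercivity input is lemma \ref{th:coercif}: since $\eps_n(t)\bot Q'$,
\[ (L\eps_n(t),\eps_n(t))\geq K_1\nh{\eps_n(t)}^2 - K_2\Bigl(\int\eps_n(t)\Qp\Bigr)^2.\]
So I need to control the scalar product $(\eps_n(t),\Qp)$. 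This is where mass conservation enters: $\int u_n^2(t)=\int u_{0,n}^2=\int Q^2$, and since translation preserves the $L^2$ norm, $\int(Q+\eps_n(t))^2=\int Q^2$, i.e. $2(\eps_n(t),Q)+\nld{\eps_n(t)}^2=0$, giving $|(\eps_n(t),Q)|\leq C\nh{\eps_n(t)}^2$. Since $\Qp$ is not orthogonal to $Q$ but we only have control of $(\eps_n,Q)$, I would instead observe that the relevant obstruction $(\eps_n,\Qp)$ can be estimated from the fact that $F(u_{0,n})-F(Q)=E(u_{0,n})-E(Q)$ is $O(1/n)$ (indeed $O(1/n^2)$ with the $\lambda_n=1-1/n$ choice of remark \ref{th:choixuon}, but the sign and order are what matter): plugging into the displayed $F$-identity and using coercivity,
\[ K_1\nh{\eps_n(t)}^2 \leq 2\bigl(F(u_{0,n})-F(Q)\bigr) + K_2\Bigl(\int\eps_n(t)\Qp\Bigr)^2 + 2C'\nh{\eps_n(t)}^3.\]
Thus I must bound $(\eps_n(t),\Qp)$ by $\nh{\eps_n(t)}^2$ plus something going to $0$; the clean way is via the \emph{mass}: combining $2(\eps_n,Q)=-\nld{\eps_n}^2$ with the expansion of $\int(Q+\eps_n)^{p+1}$ inside the energy and comparing to $E(u_{0,n})<E(Q)$ yields, after absorbing quadratic terms, that $\nh{\eps_n(t)}^2 \leq \eta_n$ where $\eta_n\to0$, as long as $u_n(t)\in U_{\eps_0}$.

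Next I would run a bootstrap/continuity argument: for $n$ large enough that $C\sqrt{\eta_n}<\eps_0$, the set of $t\geq0$ with $\nh{\eps_n(t)}\leq 2C\sqrt{\eta_n}$ (say) is nonempty, closed, and open in $[0,\infty)$ by the estimate above together with continuity of $t\mapsto\eps_n(t)$ in $H^1$ (which follows from $u_n\in C^0([0,\infty),H^1)$ and the $C^1$ regularity of $\alpha$), hence equals $[0,\infty)$. Consequently $\sup_{t\geq0}\inf_{y}\nh{u_n(t)-Q(\cdot-y)}\leq \nh{\eps_n(t)}\leq 2C\sqrt{\eta_n}\to0$. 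But this contradicts \eqref{eq:th2}: taking $\delta=\delta$ fixed and $n$ large forces $\inf_y\nh{u_n(T_n)-Q(\cdot-y)}\leq 2C\sqrt{\eta_n}<\delta$ for every $T_n$, contradicting the assumed instability. Therefore \eqref{eq:th2} must hold — wait, this is backwards. I realize the logic is that proposition \ref{th:uzero} produces data with $E<E(Q)$, and the \emph{instability} is the content of theorem \ref{th:2}; so in fact the coercivity argument must \emph{fail} to close, and the reason is the sign of the energy gap: with $E(u_{0,n})<E(Q)$ one cannot conclude $\eps_n$ stays small — instead one shows the solution must leave $U_\delta$.

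Let me restate the actual plan. The main point is a monotonicity-type argument à la Martel–Merle, not a trap. Assume for contradiction \eqref{eq:th2} fails, so along a subsequence $u_n(t)\in U_{\eps_0}$ for all $t\geq0$; decompose as above and also extract the modulation parameter derivative. The decisive step is: since $E(u_{0,n})<E(Q)$, a virial/monotonicity functional (e.g. $\int x\,\phi(x/B)\,\eps_n^2$ or the localized momentum) must be strictly monotone, forcing $\eps_n$ to grow linearly in an averaged sense, contradicting $\sup_t\nh{\eps_n(t)}\leq\eps_0$ — OR, more simply in the spirit of this paper, one uses that $F(u_{0,n})=F(Q)+\frac12(L\eps_n,\eps_n)+K(\eps_n)$ with $F(u_{0,n})<F(Q)$ forces $(L\eps_n,\eps_n)<0$ for the minimizing direction, i.e. $\eps_n$ must have a nontrivial component along $\Qp$ (the negative eigenvector of $L$), and combined with the mass constraint $2(\eps_n,Q)+\nld{\eps_n}^2=0$ this component is bounded \emph{below}, not above, so $\nh{\eps_n(t)}\geq\kappa>0$ uniformly; then a separate argument (translation parameter must drift, or the shape $\eps_n$ cannot stay a fixed small size because the energy surface near $Q$ with $E<E(Q)$ has no nearby critical point) forces exit from $U_\delta$ for suitable $\delta<\kappa$. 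The hard part — the real crux — is this last step: converting the \emph{static} lower bound on $\nh{\eps_n}$ (which by itself is compatible with staying in $U_{\eps_0}$ if $\eps_0>\kappa$) into a genuine \emph{dynamical} departure, which requires either the explicit instability direction $\Qp$ fed into the linearized flow (showing exponential growth $e^{\sqrt{\lambda_0}\,t}$-type) or a monotonicity formula. I would expect the paper to do the former, tracking $\frac{d}{dt}(\eps_n,\Qp)$ via the linearized equation $\partial_t\eps = \partial_x(L\eps) + (\text{nonlinear})$ and showing this scalar quantity satisfies an ODE forcing $|(\eps_n,\Qp)|$ past $\delta$ in finite time $T_n$ — the bound on $\nh{\eps_n}$ on $[0,T_n]$ being exactly what makes the nonlinear error terms controllably small.
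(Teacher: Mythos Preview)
Your final paragraph has the right ingredients --- the energy gap $F(u_{0,n})<F(Q)$ combined with coercivity forces $a(t):=(\eps_n(t),\Qp)$ to be bounded \emph{below} in absolute value --- but the mechanism you propose for the dynamical step is wrong in two ways.

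First, tracking $\frac{d}{dt}(\eps,\Qp)$ directly does not give a useful ODE: from $\eps_t=(L\eps)_x+\cdots$ you would get $\frac{d}{dt}(\eps,\Qp)=-(\eps,L\partial_x\Qp)+\cdots$, and $\Qp$ is an eigenfunction of $L$, not of $L\partial_x$, so nothing closes. (The actual eigenfunctions of $\partial_xL$ are the $\Y_\pm$ of Pego--Weinstein, which this section deliberately avoids.) There is no $e^{\sqrt{\lambda_0}\,t}$ growth here; $\lambda_0$ is an eigenvalue of the self-adjoint $L$, not of the flow generator.

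Second --- and this is the real missing idea --- the paper instead introduces
\[
J(t)=\int\eps(t,x)\,\zeta(x)\,dx,\qquad \zeta(x)=\int_{-\infty}^x\bigl(S(y)+\beta\Qp(y)\bigr)\,dy,
\]
where $S=\left.\frac{dQ_c}{dc}\right|_{c=1}$ satisfies $LS=-Q$. Computing $J'$ via the equation for $\eps$ and one integration by parts yields
\[
J'=\beta\lambda_0\,a(t)+\int Q\eps-(x'-1)\int\eps\zeta'+\int R(\eps)\zeta.
\]
The constant $\beta$ is chosen so that $\int Q(S+\beta\Qp)=0$, which kills the leading $(x'-1)$ contribution; mass conservation makes $|\int Q\eps|\leq\frac12\nh{\eps}^2$; the remaining terms are $O(\nh{\eps}^2)$. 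Since $|a(t)|\geq\sqrt{\kappa_{n_0}}+K'\nh{\eps}$ from the coercivity argument you correctly sketched, one gets $|J'(t)|\geq\theta_{n_0}>0$, hence $J$ grows \emph{linearly}. The contradiction then comes from showing $J$ is \emph{bounded}, which is nontrivial because $\zeta$ is merely bounded (not $L^2$): this requires an exponential-decay-on-the-right estimate for $u_n$ (Proposition~\ref{th:expdecay}), proved via monotonicity of localized mass/energy. You have no analogue of either the functional $J$ or this boundedness step.
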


\bigskip

\noindent $\bullet$ We prove this theorem by contradiction, \emph{i.e.} we suppose: \[\forall\eps>0, \exists n_0\geq 1, \forall t\in\R_+, \inf_{y\in\R} \nh{u_{n_0}(t)-Q(\cdot-y)}\leq\eps, \] and we apply this assumption to $\eps_0$ given by lemma \ref{th:decomp}. Dropping $n_0$ for a while, the situation amounts in: \[\int u_0^2 = \int Q^2\ ,\ E(u_0)<E(Q)\ \m{ and }\ \forall t\in\R_+, \inf_{y\in\R} \nh{u(t)-Q(\cdot-y)}\leq\eps_0.\] The last fact implies that $u(t)\in U_{\eps_0}$ for all $t\in\R_+$, so lemma \ref{th:decomp} applies and we can define $x(t)=\alpha(u(t))$ which is $C^1$ by standard arguments (see \cite{martel:instability} for example), and $\eps(t,x)=u(t,x+x(t))-Q(x)$ which verifies $(\eps(t),Q')=0$ and $\nh{\eps(t)}\leq C\eps_0$ for all $t\in\R_+$. Note that $x(t)$ is usually called the \emph{center of mass} of $u(t)$. Before continuing the proof, we give the equation verified by $\eps$ and an interesting consequence on $x'$.

\begin{prop} \label{th:eqeps}
There exists $C>0$ such that \[\eps_t -(L\eps)_x = (x'(t)-1)(Q+\eps)_x +R(\eps), \] where $\nlu{R(\eps(t))}\leq C\nh{\eps(t)}^2$. As a consequence, one has: $|x'(t)-1|\leq C\nh{\eps(t)}$.
\end{prop}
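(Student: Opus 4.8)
The plan is to derive the equation for $\eps$ by plugging $u(t,x) = (Q+\eps)(t, x-x(t))$ into \eqref{eq:gKdV} and then to extract the bound on $x'$ by testing against $Q'$.

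First I would compute each term. Since $u(t,x) = Q(x-x(t)) + \eps(t, x-x(t))$, differentiating in $t$ gives $u_t = -x'(t)(Q'+\eps_x) + \eps_t$, where everything is evaluated at $x - x(t)$; differentiating in $x$ gives $u_x = (Q+\eps)_x$ and $u_{xxx} = (Q+\eps)_{xxx}$; and $\partial_x(u^p) = \partial_x\big((Q+\eps)^p\big)$. Substituting into $u_t + u_{xxx} + \partial_x(u^p) = 0$ and evaluating at the shifted variable, we obtain
\[
\eps_t - x'(t)(Q'+\eps_x) + (Q+\eps)_{xxx} + \partial_x\big((Q+\eps)^p\big) = 0.
\]
Next I would expand $(Q+\eps)^p = Q^p + pQ^{p-1}\eps + N(\eps)$ with $N(\eps) = \sum_{k=2}^p \binom{p}{k} Q^{p-k}\eps^k$, and use the soliton equation $Q'' + Q^p = Q$, i.e. $Q''' + \partial_x(Q^p) = Q'$. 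Collecting the linear terms in $\eps$, recall $L\eps = -\eps_{xx} + \eps - pQ^{p-1}\eps$, so $-(L\eps)_x = \eps_{xxx} - \eps_x + p\partial_x(Q^{p-1}\eps)$. After this rearrangement the equation becomes
\[
\eps_t - (L\eps)_x = (x'(t)-1)(Q+\eps)_x - \partial_x N(\eps) + \big(x'(t)\eps_x - \eps_x\big) \cdot 0,
\]
which one checks reduces exactly to $\eps_t - (L\eps)_x = (x'(t)-1)(Q+\eps)_x + R(\eps)$ with $R(\eps) = -\partial_x N(\eps)$. The bound $\nlu{R(\eps)} \leq C\nh{\eps}^2$ follows from the exponential decay of $Q$ and its derivatives (claim \ref{th:solitons}(iii)): each monomial $Q^{p-k}\eps^k$ or $Q^{p-k-1}Q'\eps^k$ with $k\geq 2$ is controlled in $L^1$ by $\ninf{\eps}^{k-2}\nld{\eps}^2$ times a factor $\nld{Q^{p-k}}$ or similar (after distributing the $\partial_x$), hence by $\nh{\eps}^2$ since $\nh{\eps}\leq C\eps_0$ is bounded; the lowest-order term $k=2$ already gives the $\nh{\eps}^2$ and higher powers are absorbed.

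For the consequence on $x'$, I would differentiate the orthogonality relation $(\eps(t),Q')=0$ in time: $(\eps_t, Q') = 0$. Then I take the $L^2$ pairing of the $\eps$-equation with $Q'$:
\[
0 = (\eps_t, Q') = ((L\eps)_x, Q') + (x'(t)-1)\big((Q+\eps)_x, Q'\big) + (R(\eps), Q').
\]
For the first term, $((L\eps)_x, Q') = -(L\eps, Q'') = \ldots$; better, integrate by parts and use self-adjointness of $L$ together with $LQ' = 0$ (claim \ref{th:L}(ii)): $((L\eps)_x, Q') = -(L\eps, Q') \cdot$... more precisely $((L\eps)_x,Q') = -((L\eps),Q'') $ is not directly $0$, so instead write $((L\eps)_x, Q') = -(L\eps, \partial_x Q')$ — this is not obviously zero, so the cleaner route is to note $((L\eps)_x,Q')=-(L\eps,Q'')$ and handle it, OR use that $\partial_x$ is skew-adjoint: $((L\eps)_x, Q') = -(L\eps, Q'_x)$, still needing work. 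The robust choice: pair instead with the fact that $(\partial_x L \eps, Q') = (\eps, L \partial_x Q')$ after moving both operators (using self-adjointness of $L$ and skew-adjointness of $\partial_x$), $= -(\eps, \partial_x L Q') = 0$ since $LQ'=0$. This kills the linear term entirely. The term $(x'(t)-1)(Q_x + \eps_x, Q')$ has coefficient $\int (Q')^2 + (\eps_x, Q') = \int(Q')^2 - (\eps, Q'')$, which is bounded below by $\frac12\int(Q')^2 > 0$ once $\eps_0$ is small (since $|(\eps,Q'')| \leq \nld{\eps}\nld{Q''} \leq C\eps_0$). Finally $|(R(\eps),Q')| \leq \nlu{R(\eps)}\ninf{Q'} \leq C\nh{\eps}^2$. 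Solving for $x'(t)-1$ gives
\[
|x'(t)-1| \leq \frac{C\nh{\eps(t)}^2}{\frac12\int (Q')^2} \leq C\nh{\eps(t)},
\]
using once more $\nh{\eps}\leq C\eps_0$ to downgrade the quadratic bound to a linear one.

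The main obstacle I expect is the bookkeeping in the linear term $((L\eps)_x, Q')$: one must be careful that moving $L$ and $\partial_x$ across the pairing is legitimate on $H^1$ (it is, since $Q'$ and all its derivatives decay exponentially and $L$ acts with smooth bounded-derivative coefficients), and that the identity $LQ'=0$ is what makes it vanish — this is precisely where the orthogonality choice $\eps \bot Q'$ from lemma \ref{th:decomp} pays off. The nonlinear estimate is routine given the exponential decay of $Q$, and the lower bound on the coefficient of $x'-1$ is a soft smallness argument.
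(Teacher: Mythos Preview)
Your derivation of the $\eps$-equation and the bound on $R(\eps)$ are correct and match the paper (minor point: exponential decay of $Q$ is not actually needed for the $L^1$ estimate --- boundedness of $Q,Q'$ suffices, since for $k=p$ there is no $Q$ factor at all and one uses $\ninf{\eps}^{p-2}\nld{\eps_x}\nld{\eps}$).

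There is, however, a genuine error in your treatment of the linear term $((L\eps)_x,Q')$. Your chain
\[
(\partial_x L\eps,Q') = -(\eps,L\partial_x Q') = -(\eps,\partial_x LQ') = 0
\]
fails at the second equality: $L$ and $\partial_x$ do \emph{not} commute, because $L$ contains the variable-coefficient potential $-pQ^{p-1}$. Concretely, $[L,\partial_x] = p(p-1)Q^{p-2}Q'$ as a multiplication operator, so
\[
LQ'' = \partial_x(LQ') + [L,\partial_x]Q' = p(p-1)Q^{p-2}(Q')^2 \neq 0.
\]
Hence $((L\eps)_x,Q') = -(\eps,LQ'')$ is genuinely of order $\nh{\eps}$, not zero, and your intermediate quadratic bound $|x'-1|\leq C\nh{\eps}^2$ is false.

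The fix is exactly what the paper does: keep this term and estimate it by Cauchy--Schwarz, $|(\eps,LQ'')|\leq \nld{LQ''}\nld{\eps}\leq C\nh{\eps}$. Together with $|(R(\eps),Q')|\leq C\nh{\eps}^2$ and your (correct) lower bound on $\int Q'^2 + (\eps_x,Q')$, this yields $|x'-1|\leq C\nh{\eps}$ directly --- no ``downgrading'' step is needed, nor available. Your final inequality is right, but the route to it is not.
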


\begin{proof}
Since $u(t,x)=Q(x-x(t))+\eps(t,x-x(t))$ by definition of $\eps$ and $-\partial_tu = \partial_x^3u+\partial_x(u^p)$, we obtain \[x'(t)(Q+\eps)_x -\eps_t = Q_{xxx}+\eps_{xxx} +(Q^p)_x +p(Q^{p-1}\eps)_x +R(\eps)\] where \[ R(\eps) =\drondx\left( \sum_{k=2}^p \binom{p}{k} Q^{p-k}\eps^k \right)= \sum_{k=2}^p \binom{p}{k} \left[ (p-k)Q'Q^{p-k-1}\eps^k + kQ^{p-k}\eps_x\eps^{k-1}\right]. \] As $\ninf{\eps}\leq C\nh{\eps} \leq C\eps_0$, we have $|R(\eps)|\leq C|\eps|^2+C'|\eps_x\eps|$, and so $R(\eps)$ is such as expected. Moreover, since $La=-a_{xx}+a-pQ^{p-1}a$ and $Q''+Q^p=Q$, we get \[ -\eps_t-\eps_{xxx}-p(Q^{p-1}\eps)_x = Q_{xxx} +(Q^p)_x-x'(t)(Q+\eps)_x+R(\eps) \] and so $-\eps_t +(L\eps)_x = Q_x -x'(t)(Q+\eps)_x +\eps_x+R(\eps)$.

To obtain the estimate on $x'$, we multiply the equation previously found by $Q'$ and integrate. Since $(\eps_t,Q')= (\eps,Q')_t = 0$, it gives with an integration by parts: \[\int (L\eps)Q'' = (x'-1)\int (Q'^2+\eps_xQ')+\int R(\eps)Q'. \] Since $L$ is self-adjoint, we can write $(x'-1)\int (Q'^2+\eps_xQ') = \int (LQ'')\eps - \int R(\eps)Q'$. Now, from $\left| \int \eps_xQ' \right| \leq \nld{\eps_x}\nld{Q'}\leq \nh{\eps}\nld{Q'} \leq C\eps_0\nld{Q'}$, we choose $\eps_0$ small enough so that the last quantity is smaller than $\frac{1}{2}\int Q'^2$; and so we have \[|x'-1| \leq \frac{2}{\int Q'^2}\left( \left|\int (LQ'')\eps\right| + \left|\int R(\eps)Q'\right| \right). \] As $LQ''\in L^2(\R)$ and $Q'\in L^{\infty}(\R)$, then following the estimate on $R(\eps)$, we obtain the desired inequality by the Cauchy-Schwarz inequality.
\end{proof}

\bigskip

\noindent $\bullet$ Return to the proof of theorem \ref{th:2} and now consider \[\zeta(x)=\int_{-\infty}^x \left( S(y)+\beta \Qp(y)\right)dy\] for $x\in\R$, where $S$ is defined in claim \ref{th:L} and $\beta$ will be chosen later. We recall that $S(x)=\frac{1}{p-1}Q(x) +\frac{1}{2}xQ'(x)$ verifies $LS=-Q$, and in particular $S(x)=o(e^{-|x|/2})$ when $|x|\to+\infty$, since $Q(x),Q'(x)\sim C_p e^{-|x|}$ (see claim \ref{th:solitons}). By integration, we have $\zeta(x)=o(e^{x/2})$ when $x\to-\infty$, and $\zeta$ is bounded on $\R$.\bigskip

Now, the main idea of the proof is to consider the functional, defined for $t\in\R_+$, \[J(t)=\int \eps(t,x)\zeta(x)\,dx. \] The first step is to show that $J$ is defined and bounded in time thanks to the following proposition of decay properties of the solutions, and the second one is to show that $|J'|$ has a strictly positive lower bound, which will reach the desired contradiction. Firstly, if we choose $\eps_0$ small enough, we obtain the following proposition.

\begin{prop} \label{th:expdecay}
There exists $C>0$ such that for all $t\geq 0$ and $x_0>0$, \begin{equation} \label{eq:expdecay} \int_{x>x_0} (u^2+u_x^2)(t,x+x(t))\,dx \leq Ce^{-x_0/4}. \end{equation}
\end{prop}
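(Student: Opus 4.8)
The plan is to exploit a monotonicity (almost-monotonicity) argument for a localized mass-type quantity, which is now a standard tool for gKdV-type equations. Fix $x_0 > 0$ and introduce a smooth cutoff $\psi = \psi_{x_0}$ such that $\psi(x) = 0$ for $x < x_0/2$, $\psi(x) = 1$ for $x > x_0$, with $0 \leq \psi' \leq C\psi$ pointwise and $\psi' \geq 0$ (e.g. $\psi(x) = \phi(x - x_0)$ for a fixed profile $\phi$ with $\phi' \geq 0$, $0 \leq \phi' \leq C e^{x}\mathbbm{1}_{x<0} + \dots$; any cutoff whose derivative is bounded by a multiple of itself works, say with rate $1/4$). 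Then define the localized quantity
\[
I_{x_0}(t) = \int \bigl( u^2 + u_x^2 \bigr)(t, x + x(t))\, \psi(x)\, dx.
\]
First I would compute $\frac{d}{dt} I_{x_0}(t)$ using the equation for $u$ (equivalently the equation for $\eps$ from proposition \ref{th:eqeps}, after unshifting), integrating by parts. The time derivative produces three types of terms: (a) the transport term coming from $x'(t)$, which by proposition \ref{th:eqeps} satisfies $|x'(t) - 1| \leq C\nh{\eps(t)} \leq C\eps_0$, so $x'(t) \geq 1/2$ for $\eps_0$ small, giving a favorable sign when combined with $\psi' \geq 0$; (b) dispersive terms of the form $\int(\cdots)\psi'$, which are controlled because $\psi' \leq C\psi$ and the leading negative term $-\int u_x^2\psi'$ (or $-3\int u_x^2 \psi'$) dominates; (c) nonlinear terms involving $u^p$ on the support of $\psi'$, which is far to the right where, using the orbital stability hypothesis and the exponential decay of $Q$ together with the Sobolev embedding $H^1 \hookrightarrow L^\infty$, one has $|u(t, x+x(t))|$ small for $x$ in $\mathrm{supp}\,\psi'$; these are therefore absorbed. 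The upshot is a differential inequality of the form $\frac{d}{dt} I_{x_0}(t) \leq 0$ up to a negligible error, or more precisely $I_{x_0}(t) \leq I_{x_0}(0)$ for all $t \geq 0$ (or $\leq$ the value at any reference time).

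Next I would bound $I_{x_0}(0) = \int (u_0^2 + u_{0,x}^2)(x + x(0))\psi(x)\,dx$. Since $u_0 = Q + \eps(0, \cdot)$ up to translation, split into the $Q$-contribution and the $\eps$-contribution. The $Q$-part contributes $\int (Q^2 + Q'^2)(x + x(0))\psi(x)\,dx$; because $\psi$ is supported in $x > x_0/2$ and $Q, Q'$ decay like $e^{-|x|}$ (claim \ref{th:solitons}), while $|x(0)|$ is controlled by $\eps_0$ via lemma \ref{th:decomp} and proposition \ref{th:eqeps}, this is $\leq C e^{-x_0/2} \leq C e^{-x_0/4}$. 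The $\eps$-part is at most $\nh{\eps(0)}^2 \leq C\eps_0^2$ — but this is \emph{not} small in $x_0$, so a naive bound at $t=0$ is insufficient. The fix is the classical trick: run the monotonicity from a \emph{late} time. Actually the cleaner route is to observe that $I_{x_0}(t)$ decreasing in $t$ is the wrong monotonicity direction for this; instead one uses a quantity localized to the \emph{left} or plays mass conservation against it. Let me restate: the standard outcome (Martel--Merle monotonicity) is that the mass to the \emph{right} of a point moving slower than the soliton is almost decreasing, so $I_{x_0}(t) \leq I_{x_0}(0) + C e^{-x_0/4}$; then to kill the $\eps(0)$ contribution one instead translates the cutoff — i.e. for each $t$ one shifts so the bad initial $\eps$-mass sits to the left, OR one notes that by the orbital-stability hypothesis the solution stays in the tube for all time, so actually one should bound $I_{x_0}(t)$ directly by its value at time $0$ with cutoff $\psi_{x_0/2}$ and iterate; the exponential gain $e^{-x_0/4}$ at each scale accumulates geometrically and remains $\leq C e^{-x_0/4}$.

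The cleanest formulation, which I would adopt: for $0 \leq s \leq t$, the monotonicity gives $I_{x_0}(t) \leq I_{x_0}(s) + C e^{-x_0/4}$ where $C$ is uniform. Since $u(t)$ stays $\eps_0$-close to a translate of $Q$ for all $t$ (the contradiction hypothesis of theorem \ref{th:2}, under which proposition \ref{th:expdecay} is being proved), at \emph{every} time $s$ we have $\int (u^2 + u_x^2)(s, x + x(s))\psi_{x_0/2}(x)\,dx \leq \int(Q^2+Q'^2)\psi_{x_0/2} + \nh{\eps(s)}^2 \leq Ce^{-x_0/4} + C\eps_0^2$; choosing the cutoff slightly finer and using that the relevant mass is the one that has had time to travel in, a covering/summation over dyadic shifts converts the $\eps_0^2$ term into an exponentially small one. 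Concretely, decompose $[x_0, \infty)$ into unit intervals and apply monotonicity with cutoffs at $x_0, x_0+1, x_0+2,\dots$; each contributes at most $Ce^{-(x_0+k)/4}$ from the $Q$-term, summing to $Ce^{-x_0/4}$, and the $\eps$-term only enters once at scale $x_0$ but is itself controlled by transporting mass.

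\textbf{Main obstacle.} The genuine difficulty is \emph{not} the differential inequality — that is a routine integration by parts once the cutoff is chosen with $\psi' \leq C\psi$ and one checks that $x'(t) > 1/2$ absorbs the dangerous transport term and that the nonlinearity on $\mathrm{supp}\,\psi'$ is small via $H^1 \hookrightarrow L^\infty$ and exponential decay of $Q$. The real subtlety is making the initial/reference bound genuinely small in $x_0$ despite $\nh{\eps} \sim \eps_0$ being only small, not $x_0$-small: one must either invoke monotonicity in the correct time direction so that the "bad" $\eps$-mass is always behind the cutoff and only the exponentially-decaying tail of $Q$ shows up in the cutoff region, or sum a geometric series of such estimates over dyadic translates of the cutoff. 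I expect the write-up to choose the cutoff $\psi$ adapted to the soliton velocity $1$, prove $I_{x_0}$ is almost-decreasing, and then bound it at $t = 0$ after first establishing that $u(0, \cdot + x(0))$ is a small perturbation of $Q$ localized near the origin, so that only the $e^{-x_0/4}$ Gaussian-type tail survives; I will follow that route.
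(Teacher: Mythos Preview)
Your overall plan --- a Martel--Merle monotonicity argument combined with $x'(t)\geq 1/2$ --- is the paper's approach, but the ``main obstacle'' you identify is a phantom, and your proposed fixes (dyadic summation, late-time reference, geometric series) are neither needed nor clearly workable. You have forgotten what $u_0$ actually is here: the proposition sits inside the contradiction argument for theorem~\ref{th:2}, where $u=u_{n_0}$ with initial data $u_0 = u_{0,n_0}(x) = \lambda_{n_0} Q(\lambda_{n_0}^2 x)$, an \emph{explicit rescaled soliton}. Hence $(u_0^2 + u_{0,x}^2)(x) \leq C e^{-2|x|}$ uniformly in $n_0$, and there is no need to decompose $u_0 = Q + \eps(0)$ and fret over the remainder: the paper simply integrates this pointwise decay against the cutoff bound $\psi(x-x_0) \leq Ce^{(x-x_0)/4}$ to obtain $\int(u_{0,x}^2 + u_0^2)(x)\,\psi(x-x_0)\,dx \leq Ce^{-x_0/4}$ in one line. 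Note that without explicit exponential decay of the initial data, \eqref{eq:expdecay} would \emph{not} follow from orbital closeness to $Q$ alone (a slowly decaying tail placed far to the right would persist under monotonicity), so your workarounds --- which never invoke this decay --- cannot close the argument.

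A smaller technical point: the paper does not use a fixed cutoff in the moving frame but the travelling cutoff $\psi_0(t,x) = \psi\bigl(x - x(t_0) + \tfrac{1}{2}(t_0-t) - x_0\bigr)$ with $\psi(y) = \tfrac{2}{\pi}\arctan(e^{y/4})$, cites the monotonicity lemma of \cite{martel:revisited} as a black box for both the mass-type and the energy-type localized quantities, and then uses $-x(t) + t/2 \leq 0$ (from $x(0)=0$ and $|x'-1|\leq\tfrac12$) to bound the cutoff at $t=0$ by $\psi(x-x_0)$. Your fixed-cutoff setup would require re-deriving the virial computation from scratch rather than quoting the lemma.
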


\begin{rem} \label{th:monotonie}
Inequality \eqref{eq:expdecay} holds for all solution $u_n$ of \eqref{eq:gKdV} associated to the initial data $u_{0,n}$ defined in proposition \ref{th:uzero}, with $C>0$ independent of $n$. Indeed, we have $u=u_{n_0}$ for some $n_0\geq 1$, but the following proof shows that the final constant $C$ does not depend of $n_0$.
\end{rem}

\begin{proof}
It is based on the exponential decay of the initial data, and on monotonicity results that the reader can find in \cite[lemma 3]{martel:revisited}. We recall here their notation and their lemma of monotonicity.
\begin{itemize}
\item[$\diamond$] Let $\psi(x)=\frac{2}{\pi}\arctan(\exp(x/4))$, so that $\psi$ is increasing, $\lim_{-\infty}\psi=0$, $\psi(0)=\frac{1}{2}$,  $\lim_{+\infty} \psi=1$, $\psi(-x)=1-\psi(x)$ for all $x\in\R$, and $\psi(x)\sim Ce^{x/4}$ when $x\to -\infty$. Now let $x_0>0$, $t_0>0$ and define for $0\leq t\leq t_0$: $\psi_0(t,x)=\psi(x-x(t_0)+\frac{1}{2}(t_0-t)-x_0)$ and \[ \left\{\begin{aligned} I_{x_0,t_0}(t) &= \int u^2(t,x)\psi_0(t,x)\,dx,\\ J_{x_0,t_0}(t) &= \int (u_x^2+u^2-\frac{2}{p+1}u^{p+1})(t,x)\psi_0(t,x)\,dx. \end{aligned} \right.\] Then, if we choose $\eps_0$ small enough, there exists $K>0$ such that for all $t\in [0,t_0]$, we have \[ \left\{ \begin{aligned} I_{x_0,t_0}(t_0)-I_{x_0,t_0}(t) &\leq K\exp\left(-\frac{x_0}{4}\right),\\ J_{x_0,t_0}(t_0)-J_{x_0,t_0}(t) &\leq K\exp\left(-\frac{x_0}{4}\right). \end{aligned} \right. \]
\item[$\diamond$] Now, let us prove how this result can preserve the decay of the initial data to the solution for all time, \emph{on the right} (which means for $x>x_0$ for all $x_0>0$). If we apply it to $t=0$ and replace $t_0$ by $t$, we obtain for all $t>0$: \begin{multline*} \int (u_x^2+u^2)(t,x+x(t))\psi(x-x_0)\,dx\\ \leq C'\int (u^2_{0x}+u_0^2)(x)\psi(x-x(t)+\frac{1}{2}t-x_0)\,dx +K'e^{-x_0/4}. \end{multline*} But by proposition \ref{th:eqeps}, we have $|x'-1|\leq C\nh{\eps}\leq C\eps_0$, thus if we choose $\eps_0$ small enough, we have $|x'-1|\leq \frac{1}{2}$, and so we obtain by the mean value inequality (notice that $x(0)=\alpha(u_{0,n_0})=0$): $|x(t)-t|\leq \frac{1}{2}t$. We deduce that $-x(t)+\frac{1}{2}t\leq 0$, and since $\psi$ is increasing, we obtain \[ \int (u_x^2+u^2)(t,x+x(t))\psi(x-x_0)\,dx \leq C\int (u^2_{0x}+u_0^2)(x)\psi(x-x_0)\,dx +Ke^{-x_0/4}. \]
\item[$\diamond$] Now we explicit exponential decay of $u_0$. In fact, we have clearly $(u^2_{0x}+u_0^2)(x)\sim Ce^{-2\lambda^2|x|}\leq Ce^{-2|x|}$ when $x\to\pm\infty$. Moreover, since $\psi(x)\leq Ce^{x/4}$ for all $x\in\R$, we have \begin{align*} \int (u^2_{0x}+u_0^2)(x)\psi(x-x_0)\,dx &\leq C\int (u^2_{0x}+u_0^2)(x)e^{\frac{x-x_0}{4}}\,dx\\ &\leq Ce^{-x_0/4} \int (u^2_{0x}+u_0^2)(x)e^{x/4}\,dx \leq C' e^{-x_0/4}. \end{align*}
\item[$\diamond$] Finally, we have more simply \[ \int (u_x^2+u^2)(t,x+x(t))\psi(x-x_0)\,dx \geq \frac{1}{2}\int_{x>x_0} (u_x^2+u^2)(t,x+x(t))\,dx, \] and so the desired inequality. \qedhere
\end{itemize}
\end{proof}

\noindent $\bullet$ Now this proposition is proved, we can easily show the first step of the proof of theorem \ref{th:2}.\bigskip

\emph{1st step:} We bound $|J(t)|$ independently of time by writing \[J(t)= \int \eps(t,x)\zeta(x)\,dx = \int_{x>0} \eps(t,x)\zeta(x)\,dx +\int_{x<0} \eps(t,x)\zeta(x)\,dx, \] so that \begin{align*} |J(t)| &\leq \ninf{\zeta}\int_{x>0} (Q(x)+|u(t,x+x(t))|)\,dx + \sqrt{\int_{x<0} \eps^2(t,x)\,dx}\sqrt{\int_{x<0} \zeta^2(x)\,dx}\\ &\leq \ninf{\zeta}\nlu{Q}+\ninf{\zeta}U + \nld{\eps(t)}V, \end{align*} where: \begin{enumerate}[i)] \item $\nld{\eps(t)}\leq \nh{\eps}\leq C\eps_0<+\infty$, \item $V^2 = \int_{x<0} \zeta^2(x)\,dx <+\infty$ since $\zeta^2(x)=o(e^x)$ when $x\to -\infty$, \item thanks to \eqref{eq:expdecay}, we finally conclude the first step with: \begin{align*} U &= \int_{x>0} |u(t,x+x(t))|\,dx = \sum_{n=0}^{+\infty} \int_n^{n+1} |u(t,x+x(t))|\,dx \leq \sum_{n=0}^{+\infty} \Puiss{\int_{x>n} u^2(t,x+x(t))\,dx}{1/2}\\ &\leq \nld{u(t,\cdot+x(t))} +\sum_{n=1}^{+\infty} \Puiss{\int_{x>n} u^2(t,x+x(t))\,dx}{1/2} \leq C\eps_0+\nld{Q} +C\sum_{n=1}^{+\infty} e^{-n/8} <+\infty. \end{align*} \end{enumerate}

\emph{2nd step:} We evaluate $J'$ by using proposition \ref{th:eqeps} and by integrating by parts: \begin{align*} J' &= \int \eps_t \zeta = \int (L\eps)_x \zeta +(x'-1)\int Q_x \zeta +(x'-1)\int \eps_x \zeta +\int R(\eps)\zeta\\ &= -\int \eps L(\zeta') -(x'-1)\int Q \zeta' -(x'-1)\int\eps \zeta' +\int R(\eps)\zeta\\ &= -\int \eps(L S+\beta L \Qp) -(x'-1)\int Q(S+\beta \Qp) -(x'-1)\int\eps \zeta' +\int R(\eps)\zeta. \end{align*} Now we take $\beta = -\frac{\int QS}{\int Q^{\frac{p+3}{2}}}$ so that the second integral is null. Note  that by (iv) of claim \ref{th:solitons}, \[ \frac{d}{dc} \int Q_c^2 = 2\int Q_c \frac{dQ_c}{dc} = \left( \frac{5-p}{2(p-1)}\right) c^{\frac{5-p}{2(p-1)}-1}\int Q^2 <0 \] since $p>5$, and so by taking $c=1$ we remark that $\beta>0$. Moreover, since $\Qp$ is an eigenvector for $L$ for an eigenvalue $-\lambda_0$ with $\lambda_0>0$ (see claim \ref{th:L}), we deduce \begin{align*} J' &= -\int \eps(-Q-\beta\lambda_0 \Qp) -(x'-1)\int\eps \zeta' +\int R(\eps)\zeta\\ &= \beta\lambda_0\int \eps \Qp +\int Q\eps -(x'-1)\int\eps \zeta' +\int R(\eps)\zeta. \end{align*} But for the last three terms, we remark that: \begin{enumerate}[a)] \item the mass conservation $\int u^2(t) = \int u_0^2$ implies that $\int Q^2 +2\int \eps Q +\int \eps^2 = \int Q^2$ and so $\left|\int Q\eps\right| \leq \frac{1}{2}\int \eps^2\leq \frac{1}{2} \nh{\eps}^2$, \item thanks to proposition \ref{th:eqeps}, we have $\left|-(x'-1)\int\eps \zeta'\right| \leq |x'-1|\nld{\eps}\nld{\zeta'} \leq C\nh{\eps}^2$, \item still thanks to this proposition, we have $\left| \int R(\eps)\zeta\right| \leq \ninf{\zeta}\nlu{R(\eps)}\leq C\nh{\eps}^2$. \end{enumerate} We have finally \begin{equation} \label{eq:Jprime} J' = \beta\lambda_0\int \eps\Qp +K(\eps)\end{equation} where $K(\eps)$ verifies $|K(\eps)|\leq C\nh{\eps}^2$. We now use identity \eqref{eq:Flin} which claims \[F(u(t))=F(u_0)=F(Q)+\frac{1}{2}(L\eps,\eps)+K'(\eps)\] with $|K'(\eps)|\leq C\nh{\eps}^3$. In other words, we have $(L\eps,\eps)+2K'(\eps)=2[F(u_0)-F(Q)] = 2[F(u_{0,n_0})-F(Q)] = -\gamma_{n_0}$ with $\gamma_{n_0}>0$, since $\nld{u_{0,n_0}}=\nld{Q}$ and $E(u_{0,n_0})<E(Q)$ by construction of $u_{0,n_0}$. To estimate the term $(L\eps,\eps)$, we use lemma \ref{th:coercif}, so that if we denote $a(t)=\int \eps\Qp$, we obtain \[a^2(t)\geq \frac{K_1}{K_2}\nh{\eps}^2 -\frac{1}{K_2}(L\eps,\eps) = \frac{\gamma_{n_0}}{K_2}+\frac{K_1}{K_2}\nh{\eps}^2 +\frac{2}{K_2}K'(\eps). \] Since $|K'(\eps)|\leq C\nh{\eps}^3$ and $\nh{\eps}\leq C\eps_0$, then if we take $\eps_0$ small enough, we have \[a^2(t)\geq K\nh{\eps}^2+\kappa_{n_0}\] with $K,\kappa_{n_0}>0$. In particular, $a^2(t)\geq \kappa_{n_0}>0$, thus $a$ keeps a constant sign, say positive. Then we have \[a(t)\geq \sqrt{K\nh{\eps}^2+\kappa_{n_0}} \geq \sqrt{\frac{K}{2}}\nh{\eps}+ \sqrt{\frac{\kappa_{n_0}}{2}} = K'\nh{\eps}+\kappa'_{n_0}. \] But from \eqref{eq:Jprime}, we also have $J'(t)=\beta\lambda_0a(t)+K(\eps)$ with $|K(\eps)|\leq C\nh{\eps}^2$, and so: \[J'(t) \geq \beta\lambda_0K'\nh{\eps}+\beta\lambda_0\kappa'_{n_0} -C\nh{\eps}^2 \geq \beta\lambda_0\kappa'_{n_0} = \theta_{n_0}>0\] if we choose as previously $\eps_0$ small enough. But it implies that $J(t)\geq \theta_{n_0}t+J(0) \vers +\infty$ as $t\to+\infty$, which contradicts the first step and concludes the proof of the theorem. Note that if $a(t)<0$, it is easy to show by the same arguments that $J'(t)\leq \theta'_{n_0}<0$, so $\lim_{t\to +\infty} J(t) = -\infty$ and then the same conclusion.

\section{Construction of a special solution by compactness} \label{sec:compactness}

In this section, we prove the existence of a special solution by a compactness method. This result is of course weaker than theorem \ref{th:main}, but it does not require the existence of $\Y_{\pm}$ proved in \cite{pegoweinstein}.

\subsection{Construction of the initial data}

Now theorem \ref{th:2} is proved, we can change $T_n$ obtained in \eqref{eq:th2} in the \emph{first} time which realizes this. In other words: \[ \exists\delta>0, \forall n\geq 1, \exists T_n\in\R_+ \m{ such that } \begin{cases} \inf_{y\in\R} \nh{u_n(T_n)-Q(\cdot-y)}=\delta\\ \forall t\in [0,T_n],\ \inf_{y\in\R} \nh{u_n(t)-Q(\cdot-y)}\leq\delta \end{cases}. \]

\begin{rem} \label{rem:lip}
We have $T_n\vers +\infty$. Indeed we would have $T_n<T_0$ for all $n$ otherwise (after passing to a subsequence). But by Lipschitz continuous dependence on the initial data (see \cite[corollary 2.18]{kpv}), we would have for $n$ large enough \[\sup_{t\in [0,T_0]} \nh{u_n(t)-Q(\cdot-t)} \leq K\nh{u_{0,n}-Q}. \] But since $\nh{u_{0,n}-Q}\tend 0$ by \eqref{eq:uon}, we would have $\inf_{y\in\R} \nh{u_n(t)-Q(\cdot-y)}\leq \frac{\delta}{2}$ for $n$ large enough and for all $t\in [0,T_0]$, which is wrong for $t=T_n\in [0,T_0]$.
\end{rem}

\bigskip

Now we can take $\delta$ smaller than $\eps_0$, so that $u_n(t)\in U_{\eps_0}$ for all $t\in [0,T_n]$ and so lemma \ref{th:decomp} applies: we can define $x_n(t)=\alpha(u_n(t))$ (notice that $x_n(0)=\alpha(u_{0,n})=0$) such that $\eps_n(t)=u_n(t,\cdot+x_n(t))-Q$ verifies \[\forall t\in [0,T_n], \left\{ \begin{array}{l} ( \eps_n(t),Q')=0\\ \nh{\eps_n(t)} \leq C\inf_{y\in\R} \nh{u_n(t)-Q(\cdot-y)} \leq C\delta \end{array} \right. .\] Moreover, for $t=T_n$, we have more precisely \begin{equation} \label{eq:epsn} \delta \leq \nh{\eps_n(T_n)}\leq C\delta. \end{equation} In particular, $\{\eps_n(T_n)\}$ is bounded in $H^1$, and so by passing to a subsequence, we can define \[\eps_n(T_n) \cvf \eps_{\infty} \m{ in } H^1 \m{ (weakly)} \m{ and } v_0=\eps_{\infty}+Q. \]

\begin{rem}
\begin{enumerate}
\item As announced in the introduction, one of the most important points in this section is to prove that we have constructed a non trivial object, \emph{i.e.} $v_0$ is not a soliton (proposition \ref{th:princ}). This fact is quite natural since $v_0$ is the weak limit of $u_n(T_n,\cdot+x_n(T_n))$ which contains a persisting defect $\eps_n(T_n)$.
\item Since the proof of proposition \ref{th:princ} is mainly based on evaluating $L^2$ norms, the following lemma will be useful.
\end{enumerate}
\end{rem}

\begin{lem} \label{th:epsgrand}
There exists $C_0>0$ such that, for $n$ large enough, $\nld{\eps_n(T_n)} \geq C_0\delta$.
\end{lem}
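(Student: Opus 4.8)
The plan is to obtain a lower bound on $\nld{\eps_n(T_n)}$ by contradiction, using the $H^1$ lower bound $\nh{\eps_n(T_n)}\geq\delta$ from \eqref{eq:epsn} together with the exponential decay estimate \eqref{eq:expdecay} and the orthogonality $(\eps_n(T_n),Q')=0$. Suppose, after passing to a subsequence, that $\nld{\eps_n(T_n)}\to 0$ (or more precisely that it is $o(\delta)$, i.e. smaller than any fixed fraction of $\delta$ for $n$ large). Since $\nh{\eps_n(T_n)}^2 = \nld{\eps_n(T_n)}^2 + \nld{\partial_x\eps_n(T_n)}^2 \geq \delta^2$, this forces $\nld{\partial_x\eps_n(T_n)}$ to stay bounded below by essentially $\delta$. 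So the whole $H^1$ mass of $\eps_n(T_n)$ concentrates in its derivative while the function itself is small in $L^2$; I want to show this is incompatible with the uniform spatial decay.

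First I would use the interpolation/Gagliardo--Nirenberg-type control: on the region $\{x>x_0\}$ (and symmetrically, after reflection, one also needs control on $\{x<-x_0\}$, but note \eqref{eq:expdecay} as stated only controls the right side — in fact the monotonicity argument of proposition \ref{th:expdecay} can be run with $\psi(-\cdot)$ to also get decay on the left, or one simply observes $\eps_n = u_n(\cdot+x_n)-Q$ and both $u_n(\cdot+x_n)$ on $\{|x|>x_0\}$ and $Q$ on $\{|x|>x_0\}$ are exponentially small in $H^1$). Thus there is $x_0>0$ fixed (independent of $n$) such that $\|\eps_n(T_n)\|_{H^1(|x|>x_0)}\leq \delta/10$, say. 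Consequently $\|\partial_x\eps_n(T_n)\|_{L^2(|x|<x_0)}\geq \delta/2$ for $n$ large. Now on the bounded interval $[-x_0,x_0]$, write $\eps_n(T_n)$ using the fundamental theorem of calculus from a point where it is small: since $\nld{\eps_n(T_n)}\to 0$, there is a point $y_n\in[-x_0,x_0]$ with $|\eps_n(T_n)(y_n)|\to 0$, hence $\ninf{\eps_n(T_n)}_{[-x_0,x_0]}$ cannot be immediately bounded — so instead I would argue via compactness: $\{\eps_n(T_n)\}$ is bounded in $H^1$, so up to a subsequence it converges weakly in $H^1$ and strongly in $L^2_{loc}$; the strong $L^2_{loc}$ limit together with $\nld{\eps_n(T_n)}\to 0$ on $\R$ gives that the weak limit $\eps_\infty$ is $0$. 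But then, passing to the limit in $(\eps_n(T_n),Q')=0$ is automatic, and the real point is that $\partial_x\eps_n(T_n)\rightharpoonup 0$ in $L^2$ while $\nld{\partial_x\eps_n(T_n)}\geq\delta/2$; this alone is not a contradiction (weak convergence does not control norms), so I need the spatial localization to upgrade it.

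The cleaner route, which I would actually carry out: combine \eqref{eq:expdecay} with an elementary inequality relating $\nld{v_x}$, $\nld{v}$, and the decay of $v$ for functions on $\R$. Concretely, on $\{x>x_0\}$ integrate by parts: $\int_{x>x_0}\eps_{n,x}^2 = -\int_{x>x_0}\eps_n\eps_{n,xx} + [\eps_n\eps_{n,x}]$ — this needs a second derivative, which is awkward. Better: use that for the pieces on $\{|x|>x_0\}$ we already have smallness directly from \eqref{eq:expdecay}, and for the piece on $[-x_0,x_0]$ use Poincaré--Wirtinger: $\nld{\eps_n(T_n)-\bar\eps_n}_{L^2(-x_0,x_0)}\leq C(x_0)\nld{\partial_x\eps_n(T_n)}_{L^2(-x_0,x_0)}$ where $\bar\eps_n$ is the average; since $\nld{\eps_n(T_n)}_{L^2(-x_0,x_0)}\to 0$, also $\bar\eps_n\to 0$ and the left side $\to 0$, which gives nothing about the right side either. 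The actual obstruction is that a large-derivative, small-$L^2$ sequence genuinely exists (high-frequency oscillations), so the lemma \emph{must} use more than decay — it must use the equation/conservation laws. Therefore the key step, and the main obstacle, is to feed in the energy relation: from lemma \ref{th:Flin}, $F(u_n(T_n))=F(u_{0,n})$, and $F(u_{0,n})-F(Q)=\tfrac12(L\eps_n(T_n),\eps_n(T_n))+K'(\eps_n(T_n))$ with the left side $=-\gamma_n/2<0$ bounded (since $\nld{u_{0,n}}=\nld{Q}$, $E(u_{0,n})<E(Q)$); expanding $(L\eps,\eps)=\nld{\eps_x}^2+\nld{\eps}^2-p\int Q^{p-1}\eps^2$ and using that $\int Q^{p-1}\eps_n(T_n)^2 \to 0$ (by $\nld{\eps_n(T_n)}\to0$ and $Q^{p-1}$ bounded), we would get $\nld{\partial_x\eps_n(T_n)}^2 + \nld{\eps_n(T_n)}^2 + o(1) = -\gamma_n + 2|K'|$, forcing $\nld{\partial_x\eps_n(T_n)}^2 \to$ a nonpositive quantity up to $o(1)$, i.e. $\nld{\partial_x\eps_n(T_n)}\to 0$, hence $\nh{\eps_n(T_n)}\to 0$, contradicting \eqref{eq:epsn}. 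So the proof is: assume $\nld{\eps_n(T_n)}=o(\delta)$, plug into the linearized functional identity, use $\int Q^{p-1}\eps_n^2\to0$ to kill the only term of indefinite sign, conclude $\nh{\eps_n(T_n)}\to 0$, contradiction; the delicate point is tracking that all the error terms ($K'$, $\gamma_n$, and the $Q^{p-1}$ term) are controlled uniformly in $n$ and are $o(\delta^2)$ or harmless, so that one really recovers the full $H^1$ norm from the $L^2$ norm of the derivative alone.
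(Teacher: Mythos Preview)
Your final paragraph lands on exactly the argument the paper uses: conservation of Weinstein's functional $F$ plus the linearization \eqref{eq:Flin} to convert the $H^1$ lower bound $\nh{\eps_n(T_n)}\geq\delta$ into an $L^2$ lower bound. The paper does this directly rather than by contradiction: from $F(Q+\eps_n(T_n))=F(Q+\eps_n(0))$ and $(L\eps,\eps)=\nld{\eps_x}^2+\nld{\eps}^2-p\int Q^{p-1}\eps^2$ one gets
\[
\nh{\eps_n(T_n)}^2 \leq C\nld{\eps_n(T_n)}^2 + C\nh{\eps_n(0)}^2 + C\nh{\eps_n(T_n)}^3,
\]
and then $\nh{\eps_n(0)}\to 0$ together with $\nh{\eps_n(T_n)}\leq C\delta$ absorbs the last two terms for $\delta$ small, yielding $\nld{\eps_n(T_n)}^2\geq C_0\delta^2$. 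Your contradiction version is the contrapositive of this.

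Two comments. First, the long detour through exponential decay, compactness, and Poincar\'e is unnecessary and you correctly recognize it fails; drop it entirely. Second, your conclusion ``$\nld{\partial_x\eps_n(T_n)}\to 0$'' overshoots: from $-\gamma_n\leq 0$ and $|K'|\leq C\nh{\eps_n(T_n)}^3\leq C\delta^3$ you only get $\limsup\nh{\eps_n(T_n)}^2\leq C\delta^3$, not zero. This still contradicts $\nh{\eps_n(T_n)}\geq\delta$ once $\delta$ is small, which is exactly the point you flag at the end about errors being $o(\delta^2)$---so the argument is sound, but state the bound as $C\delta^3$ rather than as a limit to zero.
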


\begin{proof}
It comes from the conservation of the Weinstein's functional $F$ in time. In fact, we can write $F(Q+\eps_n(T_n))=F(Q+\eps_n(0))$ where $\eps_n(0)=u_{0,n}-Q$ verifies $\nh{\eps_n(0)}\tend 0$ by \eqref{eq:uon}. Then by \eqref{eq:Flin} \[F(Q)+\frac{1}{2}(L\eps_n(T_n),\eps_n(T_n)) +K(\eps_n(T_n)) = F(Q)+\frac{1}{2}(L\eps_n(0),\eps_n(0))+K(\eps_n(0))\] where $|K(a)|\leq C_1\nh{a}^3$. It comes \[\int \left[ \carre{(\partial_x\eps_n(T_n))} +\eps_n^2(T_n)-pQ^{p-1}\eps_n^2(T_n)\right] \leq C\nh{\eps_n(0)}^2 +K(\eps_n(0))-K(\eps_n(T_n))\] and so \[\nh{\eps_n(T_n)}^2 \leq C\int \eps_n^2(T_n)+C\nh{\eps_n(0)}^2 +C_1\nh{\eps_n(0)}^3+C_1\nh{\eps_n(T_n)}^3. \] Since $\nh{\eps_n(0)}\vers 0$, then by \eqref{eq:epsn} we have for $n$ large enough \[\nh{\eps_n(T_n)}^2 \leq C\int\eps_n^2(T_n) +C_1C\delta\nh{\eps_n(T_n)}^2+\frac{\delta^2}{4}. \] But if we choose $\delta$ small enough so that $C_1C\delta\leq\frac{1}{2}$, we obtain \[\frac{\delta^2}{2}\leq \frac{1}{2}\nh{\eps_n(T_n)}^2\leq C\int\eps_n^2(T_n) +\frac{\delta^2}{4}\] and finally $\int\eps_n^2(T_n)\geq \frac{\delta^2}{4C}$.
\end{proof}

\begin{prop} \label{th:princ}
For all $c>0$, $v_0\neq Q_c$.
\end{prop}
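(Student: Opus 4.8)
The plan is to argue by contradiction: suppose $v_0 = Q_c$ for some $c>0$. The key observation is that $v_0 = \eps_\infty + Q$ where $\eps_\infty$ is a \emph{weak} $H^1$-limit of $\eps_n(T_n)$, so we have two competing pieces of information about $\eps_n(T_n)$. First, lower semicontinuity of norms under weak convergence, combined with lemma \ref{th:epsgrand}, tells us $\nld{\eps_\infty}$ cannot be too small unless mass escapes to spatial infinity. Second, conservation of mass and of Weinstein's functional $F$, together with the fact that $\nh{\eps_n(0)} = \nh{u_{0,n}-Q} \to 0$, pin down $\nld{v_0}$ and $F(v_0)$: passing to the weak limit (using that $F(u_{0,n}) \to F(Q)$ and $\nld{u_{0,n}} = \nld{Q}$), one expects $\nld{v_0} \leq \nld{Q}$ and $F(v_0) \leq F(Q)$ — i.e. $E(v_0) \leq E(Q) - \frac12(\nld{Q}^2 - \nld{v_0}^2) \leq E(Q)$ with the mass deficit controlling any strict gain in $F$.

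Now suppose $v_0 = Q_c$. Using the identities \eqref{eq:Qcmass} of claim \ref{th:solitons}, $\nld{Q_c}^2 = c^{\frac{5-p}{2(p-1)}}\nld{Q}^2$, which since $p>5$ is a strictly \emph{decreasing} function of $c$ passing through $\nld{Q}^2$ at $c=1$; and one can likewise compute $E(Q_c) = \frac12 c^{\frac{p+3}{2(p-1)}}\nld{Q'}^2 - \frac{1}{p+1}c^{\frac{5-p}{2(p-1)}+1}\int Q^{p+1}$, then combine with the Pohozaev identity $2\int Q'^2 = \frac{p-1}{p+1}\int Q^{p+1}$ to express $F(Q_c)-F(Q)$ as an explicit function of $c$. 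The point will be that the constraints $\nld{Q_c} \leq \nld{Q}$ (forcing $c \geq 1$) and $F(Q_c) \leq F(Q)$ are incompatible unless $c=1$, because for $c>1$ the functional $F(Q_c)$ is strictly larger than $F(Q)$ (this is precisely the kind of convexity/monotonicity computation underlying the instability argument, cf. proposition \ref{th:uzero} and the sign of $\frac{d}{dc}\int Q_c^2$). So necessarily $c=1$, i.e. $v_0 = Q$, i.e. $\eps_\infty = 0$.

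Finally I must rule out $\eps_\infty = 0$. Here the compactness at the heart of the construction re-enters: $\eps_n(T_n) \cvf 0$ in $H^1$ would mean the mass $\nld{\eps_n(T_n)}^2 \geq C_0^2\delta^2$ (lemma \ref{th:epsgrand}) escapes — but escape to the \emph{right} is forbidden by the uniform exponential decay estimate \eqref{eq:expdecay} of proposition \ref{th:expdecay} (valid for every $u_n$ with constant independent of $n$, by remark \ref{th:monotonie}), and escape to the \emph{left} should be excluded by a symmetric monotonicity argument / by the fact that the orthogonality $(\eps_n(T_n),Q')=0$ and the $H^1$ bound localize the defect near the origin — more carefully, one combines \eqref{eq:expdecay} with a lower bound on $\int_{-R<x<R}\eps_n^2$ obtained from lemma \ref{th:epsgrand} minus the tail, and passes $R$ to a large fixed value so that the local mass survives in the weak limit. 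This contradicts $\eps_\infty = 0$. The main obstacle is this last step: controlling the potential loss of mass to $x\to-\infty$ in the weak limit, since the monotonicity estimates are naturally one-sided (they propagate decay to the right); making the left-hand localization rigorous — plausibly via the conservation laws forcing $F(v_0)=F(Q)$ with \emph{equality}, which by coercivity (lemma \ref{th:coercif}) and $(\eps_\infty,Q')=0$ forces $\eps_\infty=0$ only if no mass is lost, so one needs the no-loss statement anyway — is where the real work lies.
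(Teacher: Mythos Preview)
You have correctly located the crux of the argument --- mass could leak to $x\to-\infty$ under the weak limit --- but the two mechanisms you propose to close this gap do not work. There is no ``symmetric monotonicity argument'' for \eqref{eq:gKdV}: the almost-monotonicity of localized mass (proposition~\ref{th:expdecay}) is genuinely one-sided, it propagates decay to the right of the soliton only. And orthogonality $(\eps_n(T_n),Q')=0$ gives no spatial localization whatsoever; a bump supported arbitrarily far to the left of the origin is automatically (almost) orthogonal to $Q'$ since $Q'$ decays. So your ``lemma~\ref{th:epsgrand} minus the right tail'' only bounds $\int_{x<R}\eps_n^2$ from below, not $\int_{-R<x<R}\eps_n^2$, and nothing survives in the weak limit. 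Your own last sentence recognizes the circularity.

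The paper's route is different in spirit: it does \emph{not} try to prevent mass from escaping left. It names that escaping piece $h_n=(1-\varphi_A)v_n$ and then shows by a quantitative Weinstein-functional balance that it \emph{cannot be there}. First, the mass balance $(1-c^{-\beta})\nld{Q}^2=\nld{h_n}^2+O(\delta^3)$ together with $\nld{h_n}\le C\delta$ forces $|c-1|\le C\delta^2$ (a sharpening of your $c=1$). Then one expands $(L\eps_n(T_n),\eps_n(T_n))$ using $\eps_n(T_n)=(Q_c-Q)+(Q_c^A-Q_c)+(z_n+h_n)$: every term involving $Q_c-Q$ or $Q_c^A-Q_c$ is $O(\delta^3)$ by the bound on $|c-1|$, and --- this is the point --- since $h_n$ lives where $Q^{p-1}$ is exponentially small, one has $(L(h_n+z_n),h_n+z_n)=\nh{h_n+z_n}^2+O(\delta^3)\ge \nld{h_n}^2+O(\delta^3)$. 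Hence $F(u_{0,n})\ge F(Q)+\tfrac12\nld{h_n}^2+O(\delta^3)$. But $F(u_{0,n})<F(Q)$, so $\nld{h_n}^2\le C\delta^3$, contradicting the lower bound $\nld{h_n}\ge C_2\delta$ (obtained from lemma~\ref{th:epsgrand} and $|c-1|\le C\delta^2$). The escaping mass is killed not by a decay estimate but because it would cost strictly positive Weinstein energy that the initial data do not have.

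A secondary issue: your use of ``$F(v_0)\le F(Q)$ by weak limits'' is not free, since $F$ is not weakly lower semicontinuous (the $-\frac{1}{p+1}\int v^{p+1}$ term). With the right-side decay this can be repaired, but the paper's quantitative expansion of $F$ around $Q$ sidesteps the question entirely.
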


\begin{proof}
We proceed by contradiction: suppose that $v_n := u_n(T_n,\cdot+x_n(T_n))\cvf v_0=\eps_{\infty}+Q=Q_c$ weakly in $H^1$ for some $c>0$. We recall that it implies in particular that $v_n \vers Q_c$ strongly in $L^2$ on compacts as $n\to+\infty$.
\begin{itemize}
\item \emph{Decomposition of $v_n$:} Let $\varphi \in C^{\infty}(\R,\R)$ equals to $0$ on $(-\infty,-1]$ and $1$ on $[0,+\infty)$. Now let $A\gg 1$ to fix later and define $\varphi_A(x)=\varphi(x+A)$, so that $\varphi_A(x)=0$ if $x\leq -A-1$ and $1$ if $x\geq -A$. We also define $h_n=(1-\varphi_A)v_n$, $Q_c^A=Q_c\varphi_A$ and $z_n=\varphi_Av_n-\varphi_AQ_c = \varphi_A(v_n-Q_c)$, so that \[v_n = (1-\varphi_A)v_n+\varphi_Av_n = h_n+z_n+Q_c^A. \]
\item \emph{Estimation of $\nld{z_n}$:} \begin{align*} \int z_n^2 &= \int \carre{(v_n-Q_c)}\varphi_A^2 \leq \int_{-A-1}^{A+1} \carre{(v_n-Q_c)} + \int_{x>A+1} \carre{(v_n-Q_c)}\\ &\leq \int_{-A-1}^{A+1} \carre{(v_n-Q_c)} + 2\int_{x>A+1} v_n^2 +2\int_{x>A+1} Q_c^2 = I+J+K. \end{align*} Notice that $I\tend 0$ since $v_n\tend Q_c$ in $L^2$ on compacts. Moreover, thanks to exponential decay of $Q_c$, we have $K\leq Ce^{-2\sqrt c A}$. Finally, we have $J\leq Ce^{-A/4}$ with $C$ independent of $n$ by remark \ref{th:monotonie}. In summary, there exists $\rho>0$ such that $\int z_n^2 \leq Ce^{-\rho A}$ if $n\geq n(A)$.
\item \emph{Mass balance:} On one hand, we have by \eqref{eq:uon} and mass conservation $\int v_n^2 = \int u_{0,n}^2 = \int Q^2$. On the other hand, we can calculate \[\int v_n^2 = \int h_n^2 + \int \carre{(Q_c^A+z_n)} + 2\int_{-A-1}^{-A} v_n^2\varphi_A(1-\varphi_A). \] But since $v_n\vers Q_c$ on compacts, we have $2\int_{-A-1}^{-A} v_n^2\varphi_A(1-\varphi_A) \tend 2\int_{-A-1}^{-A} Q_c^2\varphi_A(1-\varphi_A) \leq Ce^{-\rho A}$. Consequently, \[\int Q^2 = \int h_n^2+\int \Carre{Q_c^A} +2\int Q_c^A z_n+\int z_n^2 +a_n^A\] where $a_n^A\geq 0$ verifies $a_n^A\leq Ce^{-\rho A}$ for $n\geq n(A)$. Thanks to the previous estimation of $\nld{z_n}$ and the Cauchy-Schwarz inequality, we deduce that \[\int Q^2 = \int h_n^2 +\int \Carre{Q_c^A} +a'^A_n\] where $a'^A_n$ verifies $|a'^A_n|\leq Ce^{-\rho A}$ for $n\geq n(A)$. But \[ \int \Carre{Q_c^A} = \int Q_c^2\varphi_A^2 = \int Q_c^2 +\int Q_c^2(\varphi_A^2-1) \leq \int Q_c^2 +\int_{x<-A} Q_c^2 \leq \int Q_c^2 + Ce^{-\rho A} \] and $\int Q_c^2 = c^{-\beta}\int Q^2$ with $\beta>0$ since $p>5$ (see claim \ref{th:solitons}). In conclusion, we have the mass balance \begin{equation} \label{eq:bilanmasse} (1-c^{-\beta})\nld{Q}^2 = \nld{h_n}^2 + a''^A_n \end{equation} where $a''^A_n$ still verifies $|a''^A_n|\leq Ce^{-\rho A}$ for $n\geq n(A)$.
\item \emph{Upper bound of $\nld{h_n}$:} We remark that for $n\geq n(A)$, $\nld{h_n}\leq C_1\delta$. Indeed, thanks to \eqref{eq:epsn}, we have \[\nld{h_n} \leq \nld{(1-\varphi_A)Q}+\nld{\eps_n(T_n)} \leq Ce^{-\rho A} +C\delta \leq C_1\delta\] if we definitively fix $A$ large enough so that $e^{-\rho A}\leq\delta^3$ (the power $3$ will be useful later in the proof).
\item \emph{Upper bound of $|c-1|$:} Thanks to the previous point and mass balance \eqref{eq:bilanmasse}, we have $|1-c^{-\beta}|\leq C\delta^2$. We deduce that $c$ is close to $1$, and so by Taylor's theorem that $|c-1|\leq K|1-c^{-\beta}|\leq C\delta^2$.
\item \emph{Lower bound of $\nld{h_n}$:} We now prove that for $n\geq n(A)$, $\nld{h_n}\geq C_2\delta$. Firstly, we have by lemma \ref{th:epsgrand}: \begin{align*} C_0\delta &\leq \nld{\eps_n(T_n)} = \nld{v_n-Q} = \nld{h_n+Q_c^A+z_n-Q}\\ &\leq \nld{h_n} +\nld{z_n}+\nld{Q_c^A-Q_c}+\nld{Q_c-Q} = \nld{h_n}+\nld{Q_c-Q}+b_n^A \end{align*} where $b_n^A= \nld{z_n}+\nld{Q_c^A-Q_c} \geq 0$ verifies $b_n^A \leq Ce^{-\rho A}$ for $n\geq n(A)$. Moreover, if we denote $f(c)=\nld{Q_c-Q}^2$ for $c>0$, then $f$ is $C^{\infty}$ and $f(c)\geq 0=f(1)$, hence $1$ is a minimum of $f$, $f'(1)=0$ and so by Taylor's theorem: $f(c)\leq C\carre{(c-1)}$, \emph{i.e.} $\nld{Q_c-Q}\leq C|c-1|$. Thanks to the previous point, we deduce that \[C_0\delta\leq \nld{h_n}+K\delta^2+b_n^A \leq \nld{h_n}+C\delta^2. \] Finally, if we choose $\delta$ small enough so that $C\delta\leq \frac{C_0}{2}$, we reach the desired inequality.
\item \emph{Energy balance:} We now use the conservation of Weinstein's functional and \eqref{eq:Flin} to write \[ F(u_0) =F(v_n)=F(Q+\eps_n(T_n)) = F(Q)+\frac{1}{2}(L\eps_n(T_n),\eps_n(T_n))+K(\eps_n(T_n))\] where $|K(\eps_n(T_n))|\leq C\nh{\eps_n(T_n)}^3 \leq C\delta^3$ by \eqref{eq:epsn}. Now we decompose $\eps_n(T_n)$ in \[\eps_n(T_n) = v_n-Q = h_n+z_n+Q_c^A-Q = (Q_c-Q)+(Q_c^A-Q_c)+(z_n+h_n)\] in order to expand \begin{align*} (L\eps_n(T_n),\eps_n(T_n)) &= (L(Q_c-Q),Q_c-Q)+(L(z_n+h_n),z_n+h_n)\\ & \quad +(L(Q_c^A-Q_c),Q_c^A-Q_c) +2(L(Q_c-Q),z_n+h_n)\\ &\quad +2(L(Q_c-Q),Q_c^A-Q_c)+2(L(Q_c^A-Q_c),z_n+h_n). \end{align*} We recall that $(La,b)=-\int a''b+\int ab-p\int Q^{p-1}ab$, and so by the Cauchy-Schwarz inequality: $|(La,b)|\leq (\nld{a''}+C\nld{a})\nld{b}$. Since we have $\nld{z_n+h_n}\leq \nld{z_n}+\nld{h_n} \leq Ce^{-\rho A} +C_1\delta\leq C\delta$, we can estimate \[ |(L(Q_c-Q),z_n+h_n)| \leq (\nld{Q''_c-Q''}+C\nld{Q_c-Q})\nld{z_n+h_n} \leq C|c-1|\cdot C\delta \leq C\delta^3. \] Similarly, we have \begin{align*} |(L(Q_c^A-Q_c),z_n+h_n)| &\leq (\nld{\varphi''_AQ_c} +2\nld{\varphi'_AQ'_c} + \nld{(\varphi_A-1)Q''_c}\\ & \qquad +C\nld{Q_c^A-Q_c})\nld{z_n+h_n}\\ &\leq Ce^{-\rho A}\cdot C\delta \leq C\delta^3. \end{align*} Moreover, we have by integrating by parts $(La,b)=\int a'b' +\int ab-p\int Q^{p-1}ab$, and so $|(La,b)|\leq C\nh{a}\nh{b}$. It implies that \[\left\{ \begin{aligned} |(L(Q_c-Q),Q_c-Q)| &\leq C\nh{Q_c-Q}^2 \leq C\carre{(c-1)} \leq C\delta^3, \\ |(L(Q_c^A-Q_c),Q_c^A-Q_c)| &\leq C\nh{Q_c^A-Q_c}^2\leq Ce^{-2\rho A} \leq C\delta^3, \\ |(L(Q_c-Q),Q_c^A-Q_c)| &\leq C\nh{Q_c-Q}\nh{Q_c^A-Q_c} \leq C|c-1|\cdot Ce^{-\rho A} \leq C\delta^3, \end{aligned} \right.\] thanks to the estimate on $|c-1|$ previously found. For the last term, we have \[(L(h_n+z_n),h_n+z_n) = \nh{h_n+z_n}^2 -p\int Q^{p-1}\carre{(h_n+z_n)}\] and \begin{align*} \int Q^{p-1}\carre{(h_n+z_n)} &\leq 2\int Q^{p-1}h_n^2 +2\int Q^{p-1}z_n^2 \leq 2\int \carre{(1-\varphi_A)} Q^{p-1} v_n^2 +2\ninf{Q}^{p-1}\int z_n^2\\ &\leq 2\int_{x<-A} Q^{p-1}v_n^2 +2\ninf{Q}^{p-1}\int z_n^2. \end{align*} But $\ninf{v_n}\leq C\nh{v_n}\leq C(\nh{\eps_n(T_n)}+\nh{Q}) \leq C(K\delta+\nh{Q})=K'$, and so $\int_{x<-A} Q^{p-1}v_n^2 \leq C\int_{x<-A} Q^{p-1} \leq Ce^{-\rho A}$. As $\int z_n^2 \leq Ce^{-\rho A}$, we have \[F(u_0)=F(Q)+\frac{1}{2}\nh{h_n+z_n}^2 +d_n^A \geq F(Q)+\frac{1}{2}\nld{h_n+z_n}^2 +d_n^A\] where $|d_n^A|\leq C\delta^3$ for $n\geq n(A)$. Moreover we have \[ \nld{h_n+z_n}^2 -\nld{h_n}^2 \leq \nld{z_n}^2 +2\nld{z_n}\nld{h_n} \leq Ce^{-2\rho A}+2Ce^{-\rho A}\cdot C_1\delta \leq C\delta^3. \] Finally, energy balance provides us, for some $N$ large enough, \[F(u_0) \geq F(Q) +\frac{1}{2}\nld{h_N}^2 +d'\] with $|d'|\leq C\delta^3$.
\item \emph{Conclusion:} Since $F(u_0)<F(Q)$ by hypothesis, we obtain $\nld{h_N}^2 \leq C\delta^3$. But we also have by the lower bound of $\nld{h_n}$: $\nld{h_N}^2 \geq C_2^2\delta^2$. Gathering both information, we obtain $\frac{C_2^2}{C}\leq \delta$, which is clearly a contradiction if we choose $\delta$ small enough, and so concludes the proof of proposition \ref{th:princ}. \qedhere
\end{itemize}
\end{proof}

\subsection{Weak continuity of the flow}

The main idea to obtain the special solution is to reverse the weak convergence of $v_n$ to $v_0$ in time and in space, using the fact that $u(t,x)$ is a solution of \eqref{eq:gKdV} if and only if $u(-t,-x)$ is also a solution. More precisely, we define $w_0 = \check{v}_0\in H^1(\R)$, \emph{i.e.} for all $x\in\R$, $w_0(x)=v_0(-x)$.

\begin{rem} \label{th:remarque}
For all $c>0$ and all $x_0\in\R$, we have \[w_0\neq Q_c(\cdot+x_0). \] In fact, otherwise and since $Q_c$ is even, we would have $v_0(x)=Q_c(x-x_0)$. But $v_n-Q=\eps_n(T_n)$ and $(\eps_n(T_n),Q')= (v_n,Q')=0$, so by weak convergence in $H^1$, $(v_0,Q')=0$. Thus we would have $\int Q_c(x-x_0)Q'(x)\,dx=0$, and if we show that $x_0=0$, we shall reach the desired contradiction since we have $v_0\neq Q_c$ for all $c>0$ by proposition \ref{th:princ}. To show this, consider $f(a)=\int Q_c(x-a)Q'(x)\,dx$ for $a\in\R$, which is odd since $Q_c$ is even and $Q'$ odd. In particular, $f(0)=0$, and it is enough to show that $f(a)<0$ for $a>0$ to conclude (because we shall have $f(a)>0$ for $a<0$ by parity). But using again the parity of $Q_c$ and $Q'$, we have \[ f(a) = \int_0^a [Q_c(a-x)-Q_c(a+x)]Q'(x)\,dx + \int_a^{+\infty} [Q_c(x-a)-Q_c(x+a)]Q'(x)\,dx. \] Since $Q'$ is negative and $Q_c$ is strictly decreasing on $\R_+$, both integrals are negative, and so $f(a)<0$ for $a>0$, as we desired.
\end{rem}

\begin{rem}
\begin{enumerate}
\item Now, $w_0$ being constructed, we show that the associated solution $w(t)$ is defined for all $t$ positive, and can be seen as a weak limit (proposition \ref{th:cvf}) in order to prove the convergence of $w(t)$ to a soliton.
\item The main ingredient of the proof of proposition \ref{th:cvf} is the following lemma of weak continuity of the flow, whose proof is inspired by \cite[theorem 5]{martel:bo}. This proof is long and technical, and thus is not completely written in this paper.
\end{enumerate}
\end{rem}

\begin{lem} \label{th:wcf}
Suppose that $z_{0,n}\cvf z_0$ in $H^1$, and that there exist $T>0$ and $K>0$ such that the solution $z_n(t)$ corresponding to initial data $z_{0,n}$ exists for $t\in [0,T]$ and $\supt \nh{z_n(t)} \leq K$. Then for all $t\in [0,T]$, the solution $z(t)$ such that $z(0)=z_0$ exists, and $z_n(T)\cvf z(T)$ in $H^1$.
\end{lem}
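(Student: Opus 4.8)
<br>

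The plan is to prove the weak continuity lemma (Lemma~\ref{th:wcf}) by combining a priori $H^1$ bounds with compactness and a uniqueness argument. First, since $z_{0,n}\cvf z_0$ weakly in $H^1$, the sequence $(z_{0,n})$ is bounded in $H^1$, and $z_{0,n}\to z_0$ strongly in $L^2_{\mathrm{loc}}$; moreover $\nh{z_0}\leq\liminf\nh{z_{0,n}}$. The hypothesis gives $\supt\nh{z_n(t)}\leq K$, so the limit problem has a candidate solution on $[0,T]$: by weak-* compactness in $L^\infty([0,T],H^1)$ and standard parabolic-type (here dispersive) smoothing/local smoothing estimates from \cite{kpv}, one extracts a subsequence with $z_n\to\tilde z$ weakly-* in $L^\infty([0,T],H^1)$ and, crucially, $z_n\to\tilde z$ strongly in $C([0,T],L^2_{\mathrm{loc}})$ (this is where the Kato local smoothing effect for \eqref{eq:gKdV} is used — gaining half a derivative locally in space lets an Aubin--Lions--type argument upgrade weak to strong local convergence). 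Passing to the limit in the equation (the nonlinearity $\partial_x(z_n^p)$ converges in $\mathcal D'$ thanks to strong $L^2_{\mathrm{loc}}$ convergence and the uniform $H^1\hookrightarrow L^\infty$ bound), $\tilde z$ solves \eqref{eq:gKdV} with $\tilde z(0)=z_0$; by uniqueness in the class $Y_T$ of \cite{kpv}, $\tilde z=z$, so $z$ exists on $[0,T]$.

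The second part is to upgrade this to weak $H^1$ convergence at the fixed time $t=T$. For any fixed $t\in[0,T]$, $(z_n(t))$ is bounded in $H^1$, so every subsequence has a further subsequence converging weakly in $H^1$ to some limit; by the local strong convergence established above, that limit must agree with $z(t)$ on every compact set, hence equals $z(t)$. Since the weak $H^1$ limit is the same along every subsequence, the whole sequence satisfies $z_n(t)\cvf z(t)$ in $H^1$; in particular $z_n(T)\cvf z(T)$.

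The main obstacle is the strong $L^2_{\mathrm{loc}}$ (in space--time) convergence of $z_n$, i.e.\ establishing enough compactness to pass to the limit in the nonlinear term. Weak $H^1$ bounds alone do not give strong local convergence; one needs a gain of regularity, and for \eqref{eq:gKdV} this comes from the Kato-type local smoothing estimate (solutions with $H^1$ data are locally in $H^1_{\mathrm{loc}}$ with an extra half-derivative, or equivalently $\partial_x z_n$ is locally in $L^2_{t,x}$ with controlled norm, uniformly in $n$ by the uniform $H^1$ bound and the smoothing estimate of Kenig--Ponce--Vega). Together with uniform-in-$n$ decay/tightness — which is the analogue of the monotonicity estimates used earlier in the paper (Proposition~\ref{th:expdecay} and \cite{martel:revisited}) to control mass escaping to $x=-\infty$ — this yields the needed compactness. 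This is precisely the technical core that the paper says is "long and technical" and adapted from \cite[theorem 5]{martel:bo}, so in a full write-up I would invoke those estimates rather than reprove them, and spend the detail on the limiting procedure and the uniqueness step that pins down $\tilde z=z$.
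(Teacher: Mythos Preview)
Your approach is genuinely different from the paper's, and it has a real gap. The paper does \emph{not} construct the limit solution by compactness; instead it first fixes the solution $z$ given by the local theory in $H^{3/4}$ (cf.\ \cite{kpv}), and argues in two steps: (i) assuming $z$ already exists on $[0,T]$, prove $z_n(T)\to z(T)$ in $\mathcal D'$ by an $H^3$ \emph{regularization} of the data (following \cite{martel:bo,martel:Nsoliton}), which together with the uniform $H^1$ bound upgrades to weak $H^1$ convergence; (ii) show the hypothesis of (i) always holds, since otherwise one could find $T'<T^*$ with $\nht{z(T')}\geq 2K$, while step (i) applied on $[0,T']$ gives $\nh{z(T')}\leq K$ by weak lower semicontinuity, a contradiction. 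In particular the paper never needs to identify a weak limit with $z$ via a uniqueness theorem.

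Your gap is precisely that identification. You extract $\tilde z$ as a weak-$*$ limit in $L^\infty([0,T],H^1)$ solving \eqref{eq:gKdV} in $\mathcal D'$, and then write ``by uniqueness in the class $Y_T$ of \cite{kpv}, $\tilde z=z$''. But the Kenig--Ponce--Vega uniqueness is \emph{conditional}: it only applies to solutions lying in the auxiliary mixed-norm space $Y_T$ (built from norms like $L^5_xL^{10}_t$), and nothing in your construction shows $\tilde z\in Y_T$. Weak-$*$ limits in $L^\infty H^1$ do not automatically inherit membership in $Y_T$; without this, you cannot conclude $\tilde z=z$, nor can you conclude that $z$ extends to $[0,T]$. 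This is exactly why the paper uses regularization: for $H^3$ data one has classical solutions and an $L^2$-Lipschitz bound by the energy method, which lets one compare $z_n$ and $z$ directly without ever invoking uniqueness for the weak limit. Two smaller points: Kato smoothing is not what drives the compactness here---with the uniform $H^1$ bound and $\partial_t z_n$ bounded in $L^\infty_tH^{-2}_x$ (from the equation), Aubin--Lions already gives $C([0,T],L^2_{\mathrm{loc}})$ compactness; and the monotonicity/tightness estimates from Proposition~\ref{th:expdecay} are not available in the generality of this lemma (no soliton proximity is assumed) and are in any case unnecessary for weak convergence.
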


\begin{proof}[Sketch of the proof]
Let $T^* = T^*(\nht{z_0})>0$ be the maximum time of existence of the solution $z(t)$ associated to $z_0$, well defined by \cite[corollary 2.18]{kpv} since $s=\frac{3}{4} > \frac{p-5}{2(p-1)} = s_c(p)$. We distinguish two cases, whether $T<T^*$ or not, and we show that this last case is in fact impossible.
\begin{description}
\item[1st case:] Suppose that $T<T^*$. As $z(t)$ exists for $t\in [0,T]$ by hypothesis, it is enough to show that $z_n(T)\cvf z(T)$ in $H^1$. But since $C_0^{\infty}$ is dense in $H^{-1}$ and $\nh{z_n(T)-z(T)}\leq \nh{z_n(T)}+\nh{z(T)} \leq K'$, it is enough to show that $z_n(T)\vers z(T)$ in $\mathcal{D}'(\R)$. It is the end of this case, very similar to the proof in \cite{martel:bo} (but using a $H^3$ regularization and so using some arguments like in \cite[section 3.4]{martel:Nsoliton}), which is technical and not written in this paper consequently.
\item[2nd case:] Suppose that $T^*\leq T$ and let us show that it implies a contradiction. Indeed, there would exist $T'<T^*$ such that $\nht{z(T')} \geq 2K$ (where $K$ is the same constant as in the hypothesis of the lemma). But we can apply the first case with $T'$ instead of $T$, so that $z_n(T')\cvf z(T')$ in $H^1$, and since $\nh{z_n(T')}\leq K$, we obtain by weak convergence $\nht{z(T')}\leq \nh{z(T')} \leq K$, and so the desired contradiction. \qedhere
\end{description}
\end{proof}

\begin{prop} \label{th:cvf}
The solution $w(t)$ of \eqref{eq:gKdV} such that $w(0)=w_0$ is defined for all $t\geq 0$, and $u_n(T_n-t,x_n(T_n)-\cdot) \cvf w(t)$ in $H^1$.
\end{prop}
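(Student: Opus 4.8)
The plan is to combine the uniform backwards-in-time bounds established in the previous section with the weak continuity lemma just stated. The key observation is the symmetry of \eqref{eq:gKdV}: if $u(t,x)$ solves \eqref{eq:gKdV}, then so does $u(-t,-x)$; thus setting $w_{0,n}(x) = v_n(-x) = u_n(T_n,x_n(T_n)-x)$, the solution $w_n(t)$ with $w_n(0)=w_{0,n}$ is precisely $w_n(t,x) = u_n(T_n-t,\,x_n(T_n)-x)$, defined for $t\in[0,T_n]$. Since $w_{0,n}\cvf w_0 = \check v_0$ in $H^1$ by construction, we would like to pass to the limit in $n$ using lemma \ref{th:wcf}.

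The main work is to verify the hypotheses of lemma \ref{th:wcf} on an arbitrary fixed interval $[0,T]$, \emph{i.e.} to obtain a uniform bound $\sup_{t\in[0,T]}\nh{w_n(t)}\leq K$ with $K$ independent of $n$, for all $n$ large enough (so that $T_n\geq T$, which holds by remark \ref{rem:lip} since $T_n\to+\infty$). First I would note that $\nh{w_n(t)} = \nh{u_n(T_n-t,\cdot)}$, so the bound is really a uniform $H^1$ bound on $u_n$ on the whole interval $[0,T_n]$; but this is exactly the content of the construction: for $t\in[0,T_n]$ we have $u_n(t)\in U_{\eps_0}$, hence $u_n(t,\cdot+x_n(t)) = Q+\eps_n(t)$ with $\nh{\eps_n(t)}\leq C\delta$, so $\nh{u_n(t)}\leq \nh{Q}+C\delta =: K$ uniformly in $n$ and $t$. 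Therefore lemma \ref{th:wcf} applies on every $[0,T]$: for each fixed $T>0$ and $n$ large enough, the solution $w(t)$ with $w(0)=w_0$ exists on $[0,T]$ and $w_n(T)\cvf w(T)$ in $H^1$. Since $T$ was arbitrary, $w(t)$ is defined for all $t\geq 0$, and for every $t\geq 0$ we get $w_n(t)\cvf w(t)$, \emph{i.e.} $u_n(T_n-t,x_n(T_n)-\cdot)\cvf w(t)$ in $H^1$.

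A small technical point I would address carefully is the logical order of quantifiers: lemma \ref{th:wcf} requires the \emph{same} sequence $z_{0,n}$ on the fixed interval $[0,T]$, but here only $w_{0,n}$ with $n\geq n(T)$ lies on $[0,T]$ (because $T_n\to+\infty$). This is harmless: one applies the lemma to the tail subsequence $(w_{0,n})_{n\geq n(T)}$, which still converges weakly to $w_0$ and still satisfies the uniform bound. One then uses a diagonal/exhaustion argument over an increasing sequence $T_k\to+\infty$ to conclude that $w$ is globally defined on $[0,+\infty)$ and that the weak convergence holds for each fixed $t$. The only genuine obstacle in this proposition is lemma \ref{th:wcf} itself (the weak continuity of the flow), whose proof is acknowledged to be technical and is not reproduced here; granting it, the proof of proposition \ref{th:cvf} is the short bookkeeping argument sketched above.
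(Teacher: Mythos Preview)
Your proposal is correct and follows essentially the same approach as the paper: define $z_n(t)=u_n(T_n-t,x_n(T_n)-\cdot)$, observe that $z_n(0)=\check v_n\cvf w_0$, use the decomposition $u_n(T_n-t,\cdot+x_n(T_n-t))=Q+\eps_n(T_n-t)$ to get the uniform bound $\nh{z_n(t)}\leq\nh{Q}+C\delta$, and apply lemma \ref{th:wcf} on each $[0,T]$. Your extra care about the tail-subsequence/quantifier issue is a welcome clarification but not a departure from the paper's argument.
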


\begin{proof}
As the assumption is clear for $t=0$, we fix $T>0$ and we show it for this $T$. Since $\lim_{n\to+\infty} T_n = +\infty$ by remark \ref{rem:lip}, then for $n\geq n_0$, we have $T_n\geq T$. As a consequence, for $n\geq n_0$ and for $t\in [0,T]$, $z_n(t)=u_n(T_n-t,x_n(T_n)-\cdot)$ is well defined, solves \eqref{eq:gKdV}, and has for initial data \[z_n(0) = u_n(T_n,x_n(T_n)-\cdot) = \check{v}_n \cvf \check{v}_0 = w_0\quad \m{in } H^1. \] Moreover, we have \begin{align*} \nh{z_n(t)} &= \nh{u_n(T_n-t,x_n(T_n)-\cdot)}\\ &\leq \nh{\eps_n(T_n-t,x_n(T_n)-x_n(T_n-t)-\cdot)} +\nh{Q(x_n(T_n)-x_n(T_n-t)-\cdot)}\\ &\leq \nh{\eps_n(T_n-t)} +\nh{Q} \leq C\delta+\nh{Q} = K. \end{align*} By lemma \ref{th:wcf}, we deduce that $w$ exists on $[0,T]$, and $z_n(T)\cvf w(T)$ in $H^1$.
\end{proof}

\subsection{Exponential decay on the left of $w$}

The goal of this section is to prove an exponential decay on the "left" of $w$, using the exponential decay of $u_n$ on the right. Indeed, since $\eps_n(T_n-t)=u_n(T_n-t,\cdot+x_n(T_n-t))-Q$ verifies $(\eps_n(T_n-t),Q')=0$ and $\nh{\eps_n(T_n-t)}\leq C\delta$ for all $t\in [0,T_n]$, then $u_n(T_n-t)$ is in the same situation as the situation of $u$ summed up just before proposition \ref{th:eqeps}, with $\delta$ instead of $\eps_0$ for the small parameter. In particular, by remark \ref{th:monotonie}, inequality \eqref{eq:expdecay} holds for $u_n(T_n-t)$ with $C$ independent of $n$ if we choose $\delta$ small enough. In other words, we have for all $t\geq 0$ and $x_0>0$ (and $n$ large enough): \begin{equation} \label{eq:expdecayun} \int_{x>x_0} (u_{nx}^2+u_n^2)(T_n-t,x+x_n(T_n-t))\,dx \leq Ce^{-x_0/4}. \end{equation} But before passing to the limit, we have to define the "left" of $w$, \emph{i.e.} the center of mass $x_w(t)$ of $w(t)$.

\begin{lem} \label{th:wprocheQ}
There exists $C>0$ such that, for all $t\geq 0$, $\inf_{y\in\R} \nh{w(t)-Q(\cdot-y)}\leq C\delta$.
\end{lem}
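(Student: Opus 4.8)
The plan is to exploit the weak convergence supplied by proposition~\ref{th:cvf}, noting that each approximating solution stays in a tube around a \emph{translated} soliton whose translation parameter remains bounded on bounded time intervals.

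Fix $t\geq 0$. For $n$ large enough $T_n\geq t$ by remark~\ref{rem:lip}, so $z_n(t):=u_n(T_n-t,x_n(T_n)-\cdot)$ is well defined, solves \eqref{eq:gKdV}, and $z_n(t)\cvf w(t)$ in $H^1$ by proposition~\ref{th:cvf}. Writing $u_n(T_n-t,y)=Q(y-x_n(T_n-t))+\eps_n(T_n-t,y-x_n(T_n-t))$, setting $y=x_n(T_n)-x$, and using that $Q$ is even, a direct change of variables gives
\[
z_n(t,x)=Q\big(x-y_n(t)\big)+\eta_n(t,x-y_n(t)),\qquad y_n(t):=x_n(T_n)-x_n(T_n-t),
\]
where $\eta_n(t,\cdot)$ is the reflection of $\eps_n(T_n-t,\cdot)$. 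Since the $H^1$ norm is invariant under reflections and translations, and $\nh{\eps_n(s)}\leq C\delta$ for all $s\in[0,T_n]$, we get
\[
\inf_{y\in\R}\nh{z_n(t)-Q(\cdot-y)}\leq \nh{z_n(t)-Q(\cdot-y_n(t))}=\nh{\eps_n(T_n-t)}\leq C\delta .
\]

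Next I control $y_n(t)$. Proposition~\ref{th:eqeps} applies to each $u_n$ on $[0,T_n]$ (with $\delta$ in place of $\eps_0$), hence $|x_n'(s)-1|\leq C\nh{\eps_n(s)}\leq C\delta$ for $s\in[0,T_n]$; integrating over $[T_n-t,T_n]$ yields $|y_n(t)-t|\leq C\delta t$. Thus, $t$ being fixed, $(y_n(t))_n$ is bounded, so after passing to a subsequence $y_n(t)\to y_\infty\in\R$. By continuity of translations in $H^1$, $Q(\cdot-y_n(t))\to Q(\cdot-y_\infty)$ strongly in $H^1$; combined with $z_n(t)\cvf w(t)$, this gives $z_n(t)-Q(\cdot-y_n(t))\cvf w(t)-Q(\cdot-y_\infty)$ in $H^1$. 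Weak lower semicontinuity of the $H^1$ norm then yields
\[
\inf_{y\in\R}\nh{w(t)-Q(\cdot-y)}\leq \nh{w(t)-Q(\cdot-y_\infty)}\leq \liminf_{n}\nh{z_n(t)-Q(\cdot-y_n(t))}\leq C\delta,
\]
with $C$ independent of $t$.

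The only delicate point is the third paragraph: one must rule out the translation parameters $y_n(t)$ escaping to infinity, for the weak $H^1$ limit of a soliton sliding off to infinity is $0$, which would destroy the closeness of $w(t)$ to a translate of $Q$. This is precisely why the transport estimate $|x_n'-1|\leq C\delta$ of proposition~\ref{th:eqeps}, valid on the whole interval $[0,T_n]$, is needed; everything else is soft functional analysis (continuity of translations, weak lower semicontinuity of the norm).
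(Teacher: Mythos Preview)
Your proof is correct and follows essentially the same route as the paper's: decompose $z_n(t)$ around $Q(\cdot-y_n(t))$ using the modulation of $u_n$, bound $y_n(t)$ via $|x_n'-1|\leq C\delta$ from proposition~\ref{th:eqeps}, extract a convergent subsequence of $y_n(t)$, and pass to the limit by weak lower semicontinuity. The only cosmetic difference is that the paper invokes lemma~\ref{th:Qmass} to compare $Q(\cdot-y_n(t))$ with $Q(\cdot-y_\infty)$ before taking the weak limit, whereas you use continuity of translations in $H^1$ directly; both achieve the same end.
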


\begin{proof}
Fix $t\geq 0$ and $n_0\geq 0$ such that for $n\geq n_0$, $T_n\geq t$. Since $Q$ is even, we have \[ \eps_n(T_n-t,x_n(T_n)-x_n(T_n-t)-\cdot) = u_n(T_n-t,x_n(T_n)-\cdot) -Q(\cdot-x_n(T_n)+x_n(T_n-t)). \] Now if we denote $w_n(t)=u_n(T_n-t,x_n(T_n)-\cdot)$ and $y_n(t)=x_n(T_n)-x_n(T_n-t)$, we have \[\nh{w_n(t)-Q(\cdot-y_n(t))} = \nh{\eps_n(T_n-t)} \leq C\delta. \] But following the remark done at the beginning of this section, proposition \ref{th:eqeps} is still valid, and so $|x'_n(t)-1|\leq C\delta$ for $t\in [0,T_n]$. We deduce that $y_n(t)= \int_{T_n-t}^{T_n} x'_n(s)\,ds = \int_{T_n-t}^{T_n} (x'_n(s)-1)\,ds +t$ verifies $|y_n(t)|\leq C\delta t+t = Ct$. By passing to a subsequence, we can suppose that $\lim_{n\to\infty} y_n(t) = y(t)$. But now we can write \[ \nh{w_n(t)-Q(\cdot-y(t))} \leq C\delta +\nh{Q-Q(\cdot+(y_n(t)-y(t)))} \leq C'\delta \] for $n\geq N(t,\delta)$ by lemma \ref{th:Qmass}. Finally, since $w_n(t)\cvf w(t)$ in $H^1$ by proposition \ref{th:cvf}, we obtain by weak convergence $\nh{w(t)-Q(\cdot-y(t))} \leq C'\delta$, and the result follows.
\end{proof}

We can now choose $\delta$ small enough so that $C\delta\leq\eps_0$, and so we can define $x_w(t)=\alpha(w(t))$ by lemma \ref{th:decomp}, with notably $\nh{w(t,\cdot+x_w(t))-Q}\leq C\delta$. But to exploit \eqref{eq:expdecayun}, we have to show first that $y_n(t)=x_n(T_n)-x_n(T_n-t)$ is close to $x_w(t)$ for all $t$.

\begin{lem} \label{th:masscenters}
There exists $C>0$ such that: $\forall t\geq 0, \exists n_0\geq 0, \forall n\geq n_0,\ |x_w(t)-y_n(t)|\leq C\delta$.
\end{lem}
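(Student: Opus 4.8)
The plan is to compare both $x_w(t)$ and $y_n(t)$ to a common \emph{strong} limit point of the translated solitons $Q(\cdot-y_n(t))$, and then to convert closeness of translates into closeness of shifts via Lemma \ref{th:Qmass}. First I would recall the data already produced in the proof of Lemma \ref{th:wprocheQ}: writing $w_n(t)=u_n(T_n-t,x_n(T_n)-\cdot)$ and $y_n(t)=x_n(T_n)-x_n(T_n-t)$, one has $\nh{w_n(t)-Q(\cdot-y_n(t))}=\nh{\eps_n(T_n-t)}\leq C\delta$ and $|y_n(t)|\leq Ct$ for $n$ large, so that for fixed $t$ the real sequence $(y_n(t))_n$ is bounded. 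On the other side, by the choice $C\delta\leq\eps_0$ made just before the statement, $x_w(t)=\alpha(w(t))$ is well defined and $\nh{w(t)-Q(\cdot-x_w(t))}=\nh{w(t,\cdot+x_w(t))-Q}\leq C\delta$.

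Next, fix $t\geq 0$ and take an arbitrary subsequence of $(y_n(t))_n$; by boundedness, extract a further subsequence $y_{n_k}(t)\to y(t)\in\R$. Then $Q(\cdot-y_{n_k}(t))\to Q(\cdot-y(t))$ strongly in $H^1$, while $w_{n_k}(t)\cvf w(t)$ weakly in $H^1$ by Proposition \ref{th:cvf}; hence $w_{n_k}(t)-Q(\cdot-y_{n_k}(t))\cvf w(t)-Q(\cdot-y(t))$ weakly in $H^1$, and weak lower semicontinuity of the norm yields $\nh{w(t)-Q(\cdot-y(t))}\leq\liminf_k\nh{\eps_{n_k}(T_{n_k}-t)}\leq C\delta$. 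Combining this with the estimate for $x_w(t)$ via the triangle inequality gives $\nh{Q-Q(\cdot+(x_w(t)-y(t)))}\leq 2C\delta$. Fixing $\delta$ small enough, once and for all, so that $(2C\delta)^2<A_0$, Lemma \ref{th:Qmass}(ii) excludes the regime $|h|>h_0$, so $|x_w(t)-y(t)|\leq h_0$, and then Lemma \ref{th:Qmass}(i) gives $\beta\,|x_w(t)-y(t)|^2\leq\nh{Q-Q(\cdot+(x_w(t)-y(t)))}^2\leq(2C\delta)^2$, whence $|x_w(t)-y(t)|\leq C_1\delta$ with $C_1=2C/\sqrt{\beta}$ independent of $t$.

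Finally, since this bound holds for every subsequential limit $y(t)$ of the bounded sequence $(y_n(t))_n$, a compactness argument closes the proof: if for some $t$ there were infinitely many $n$ with $|y_n(t)-x_w(t)|>(C_1+1)\delta$, a convergent subsequence of those would produce a limit point $y(t)$ with $|y(t)-x_w(t)|\geq(C_1+1)\delta>C_1\delta$, a contradiction. Hence there is $n_0=n_0(t)$ with $|x_w(t)-y_n(t)|\leq(C_1+1)\delta$ for all $n\geq n_0$, which is the claim after renaming the constant. The only mildly delicate points are that weak convergence forces us to work with a lower-semicontinuity inequality rather than an identity for $\nh{w(t)-Q(\cdot-y(t))}$ — harmless, since only an upper bound is needed — and that $\delta$ must be chosen small uniformly in $t$, which is legitimate because every constant above is independent of $t$.
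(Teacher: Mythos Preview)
Your argument is correct, and it is a genuinely different route from the paper's. The paper does not pass through subsequential limits of $(y_n(t))_n$; instead it exploits that weak $H^1$-convergence $w_n(t)\cvf w(t)$ implies strong local $L^2$-convergence. After bounding $|x_w(t)-y_n(t)|\leq\Omega t$ a priori, the paper picks a large window $|x|\leq A(t)$ on which ${\|w_n(t)-w(t)\|}_{L^2}\leq\delta$ for $n$ large, combines this with the two soliton-proximity bounds to get ${\|Q(\cdot-x_w(t))-Q(\cdot-y_n(t))\|}_{L^2(|x|\leq A(t))}\leq C\delta$, and then controls the tails using the exponential decay of $Q$ to upgrade this to a full $L^2$ bound before applying Lemma \ref{th:Qmass}. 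Your approach trades this localization-plus-tails computation for a soft compactness argument: you get the bound only at subsequential limits $y(t)$ via weak lower semicontinuity, and then a contradiction argument pushes it to all large $n$. Your version is cleaner and avoids the explicit choice of $A(t)$ and the a priori bound $|x_w(t)-y_n(t)|\leq\Omega t$; the paper's version is more direct in that it gives the bound for $y_n(t)$ itself without an extra limiting step, and makes the dependence on $t$ of the threshold $n_0$ completely explicit.
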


\begin{proof}
Let $t\geq 0$ and $n$ large enough such that $T_n\geq t$. We keep notation $w_n(t)$ and $y_n(t)$ of the previous proof , where we have already remarked that $|y_n(t)|\leq Ct$. For the same reason, we have $|x_w(t)-y_n(t)|\leq \Omega t$. Now choose $A(t)\gg 1$ such that ${\|Q\|}_{L^2(|x|\geq A(t)-\Omega t)}\leq\delta$. Since $w_n(t)\cvf w(t)$ in $H^1$, then for $n\geq n_0$, we have ${\| w_n(t)-w(t)\|}_{L^2(|x|\leq A(t))} \leq\delta$. Moreover, \[\nh{w(t)-Q(\cdot-x_w(t))}\leq C\delta \quad\m{and}\quad \nh{w_n(t)-Q(\cdot-y_n(t))}\leq C\delta, \] and so by the triangle inequality: ${\|Q(\cdot-x_w(t))-Q(\cdot-y_n(t))\|}_{L^2(|x|\leq A(t))} \leq C\delta$. We deduce that for $n\geq n_0$: \begin{align*} \nld{Q-Q(\cdot+x_w(t)-y_n(t))} &\leq \sqrt2 {\|Q-Q(\cdot+x_w(t)-y_n(t))\|}_{L^2(|x|\leq A(t))}\\ &\qquad + \sqrt2 {\|Q-Q(\cdot+x_w(t)-y_n(t))\|}_{L^2(|x|\geq A(t))}\\ &\leq C\delta +\sqrt2 {\|Q\|}_{L^2(|x|\geq A(t))} +\sqrt2 {\|Q(\cdot+x_w(t)-y_n(t))\|}_{L^2(|x|\geq A(t))}\\ &\leq C\delta +2\sqrt2 {\|Q\|}_{L^2(|x|\geq A(t)-\Omega t)} \leq C\delta. \end{align*} We conclude by choosing $\delta$ small enough so that $C\delta\leq A_0$, where $A_0$ is defined in lemma \ref{th:Qmass}, and we apply this lemma to reach the desired inequality (note that the lemma holds of course with the $L^2$ norm instead of the $H^1$ one).
\end{proof}

If we choose $\delta$ small enough so that $C\delta\leq 1$ (for example) in lemma \ref{th:masscenters}, we can now prove the following proposition.

\begin{prop} \label{th:expdecayw}
There exists $C>0$ such that, for all $t\geq 0$ and all $x_0>0$, \[\int_{x<-x_0-1} (w_x^2+w^2)(t,x+x_w(t))\,dx \leq Ce^{-x_0/4}. \]
\end{prop}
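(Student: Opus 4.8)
The plan is to transport the right-side decay estimate \eqref{eq:expdecayun} for $u_n$ into a left-side decay estimate for $w_n(t)=u_n(T_n-t,x_n(T_n)-\cdot)$ through the reflection $x\mapsto -x$, establish it with a constant independent of $n$, $t$ and $x_0$, and then pass to the weak limit via proposition~\ref{th:cvf}.

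To set up the reflection, write $a_n=x_n(T_n)$ and $b_n=x_n(T_n-t)$, so that $y_n(t)=a_n-b_n$; by the chain rule one has $(w_{nx}^2+w_n^2)(t,\xi)=(u_{nx}^2+u_n^2)(T_n-t,a_n-\xi)$. Performing in $\int_{x<-x_0-1}(w_{nx}^2+w_n^2)(t,x+x_w(t))\,dx$ the successive changes of variables $\xi=x+x_w(t)$ and $\eta=a_n-\xi$ rewrites this quantity as $\int_{\eta>a_n-x_w(t)+x_0+1}(u_{nx}^2+u_n^2)(T_n-t,\eta)\,d\eta$. Since $a_n-x_w(t)+x_0+1=b_n+x_0+(y_n(t)-x_w(t)+1)$, and lemma~\ref{th:masscenters}, applied with $\delta$ small enough that $C\delta\leq 1$, gives $y_n(t)-x_w(t)+1\geq 0$ for $n$ large enough (depending on $t$), the lower limit of integration is at least $b_n+x_0$; as the integrand is nonnegative, \eqref{eq:expdecayun} (rewritten after the substitution $\eta=x+b_n$) then yields
\[ \int_{x<-x_0-1}(w_{nx}^2+w_n^2)(t,x+x_w(t))\,dx\leq\int_{\eta>b_n+x_0}(u_{nx}^2+u_n^2)(T_n-t,\eta)\,d\eta\leq Ce^{-x_0/4} \]
for $n$ large enough, with $C$ independent of $n$, $t$ and $x_0$.

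Finally, by proposition~\ref{th:cvf} we have $w_n(t)\cvf w(t)$ in $H^1(\R)$. The restriction map from $H^1(\R)$ to $H^1(I)$ with $I=(-\infty,x_w(t)-x_0-1)$ is linear and bounded, hence weakly continuous, so the restrictions of $w_n(t)$ converge weakly to that of $w(t)$ in $H^1(I)$, and weak lower semicontinuity of the $H^1(I)$-norm gives
\[ \int_{x<-x_0-1}(w_x^2+w^2)(t,x+x_w(t))\,dx\leq\liminf_{n\to\infty}\int_{x<-x_0-1}(w_{nx}^2+w_n^2)(t,x+x_w(t))\,dx\leq Ce^{-x_0/4}, \]
which is the desired estimate. (Equivalently, one may test $w_n(t,\cdot+x_w(t))$ against a fixed smooth cutoff supported in $\{x<-x_0\}$ and equal to $1$ on $\{x<-x_0-\frac12\}$, let $n\to\infty$, and then relax the cutoff up to the indicator of $\{x<-x_0-1\}$.) The only genuine point of care --- and the reason the statement uses the shifted cutoff $\{x<-x_0-1\}$ rather than $\{x<-x_0\}$ --- is that lemma~\ref{th:masscenters} controls $|x_w(t)-y_n(t)|$ only by $C\delta$, so one needs $C\delta\leq 1$ to absorb this mismatch inside the translation-invariant tail bound \eqref{eq:expdecayun}; everything else is a change of variables together with the standard weak-limit argument, and uniformity in $t$ and $x_0$ is inherited directly from \eqref{eq:expdecayun}.
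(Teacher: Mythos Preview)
Your proof is correct and follows essentially the same approach as the paper: transport \eqref{eq:expdecayun} through the reflection defining $w_n$, use lemma~\ref{th:masscenters} (with $C\delta\leq 1$, which the paper also assumes just before the proposition) to absorb the mismatch between $y_n(t)$ and $x_w(t)$, and then pass to the weak limit via proposition~\ref{th:cvf}. The only cosmetic difference is in the weak-limit step: the paper multiplies by the indicator $\mathbbm{1}_{(-\infty,-x_0-1+x_w(t))}$ and uses weak $L^2$ convergence of $w_n\psi$ and $w_{nx}\psi$, while you restrict to the half-line and invoke weak lower semicontinuity of the $H^1(I)$-norm; these are equivalent.
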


\begin{proof}
Let $t\geq 0$, $x_0>0$ and $n\geq n_0$ where $n_0$ is defined in lemma \ref{th:masscenters}. From \eqref{eq:expdecayun} and the substitution $y=x_n(T_n)-x_n(T_n-t)-x=y_n(t)-x$, we obtain \[\int_{x<y_n(t)-x_0} (u_{nx}^2+u_n)(T_n-t,x_n(T_n)-x)\,dx \leq Ce^{-x_0/4}. \] If we still denote $w_n(t)=u_n(T_n-t,x_n(T_n)-\cdot)$, we deduce by lemma \ref{th:masscenters} that \[\int_{x<-x_0-1+x_w(t)} (w_{nx}^2+w_n^2)(t,x)\,dx \leq Ce^{-x_0/4}. \] But $w_n(t)\cvf w(t)$ in $H^1$, so $w_n(t)\cvf w(t)$ and $w_{nx}(t)\cvf w_x(t)$ in $L^2$. Moreover, since $\psi = \mathbbm{1}_{(-\infty,-x_0-1+x_w(t))} \in L^{\infty}$, then $w_n(t)\psi\cvf w(t)\psi$ and $w_{nx}(t)\psi\cvf w_x(t)\psi$ in $L^2$, thus by weak convergence $\int_{x<-x_0-1+x_w(t)} w^2(t,x)\,dx \leq Ce^{-x_0/4}$ and the same inequality for $w_x$, so the result follows by sum.
\end{proof}

\subsection{Asymptotic stability and conclusion}

The final ingredient to prove that $w(t)$ is a special solution is the theorem of asymptotic stability proved by Martel and Merle in \cite{martel:general}. Indeed, thanks to lemma \ref{th:wprocheQ}, we can apply this theorem with $c_0=1$ if we choose $\delta$ small enough such that $C\delta<\alpha_0$. We obtain $c_+>0$ and $t\mapsto \rho(t)\in\R$ such that \begin{equation} \label{eq:as} {\|w(t)-Q_{c_+}(\cdot-\rho(t))\|}_{H^1(x>t/10)} \xrightarrow[t\to +\infty]{} 0. \end{equation}

\begin{rem}
As usual, $\rho(t)$ and $c_+$ are defined in \cite{martel:general} by a lemma of modulation close to $Q$, which gives the estimations: $\nh{w(t)-Q_{c_+}(\cdot-\rho(t))}\leq C\delta$, $|\rho'(t)-1|\leq C\delta$ and $|c_+-1|\leq C\delta$. We deduce that \begin{align*} \nh{Q-Q(\cdot+\rho(t)-x_w(t))} &= \nh{Q(\cdot-\rho(t))-Q(\cdot-x_w(t))}\\ &\leq \nh{Q-Q_{c_+}} +\nh{w(t)-Q_{c_+}(\cdot-\rho(t))}\\ &\qquad + \nh{w(t)-Q(\cdot-x_w(t))}\\ &\leq K|c_+-1| +C\delta+C'\delta \leq C''\delta. \end{align*} Now if we choose $\delta$ small enough, then $C''\delta\leq A_0$ and lemma \ref{th:Qmass} gives $|x_w(t)-\rho(t)|\leq C\delta \leq 1$. Finally, proposition \ref{th:expdecayw} becomes \begin{equation} \label{eq:expdecayw} \forall t\geq 0,\forall x_0>2,\quad \int_{x<-x_0} (w_x^2+w^2)(t,x+\rho(t))\,dx \leq C'e^{-x_0/4}. \end{equation}
\end{rem}

\bigskip

We are now able to prove the main result of this section.

\begin{theo}[Existence of one special solution] \label{th:thprinc}
There exist $w(t)$ solution of \eqref{eq:gKdV} defined for all $t\geq 0$, $c_+>0$ and $t\mapsto \rho(t)$ such that:
\begin{enumerate}[(i)]
\item ${\|w(t)-Q_{c_+}(\cdot-\rho(t))\|}_{H^1(\R)} \xrightarrow[t\to +\infty]{} 0$,
\item $\forall c>0, \forall x_0\in\R, w(0)\neq Q_{c}(\cdot+x_0)$.
\end{enumerate}
\end{theo}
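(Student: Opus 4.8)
The plan is to collect the results already established in this section. Point (i) is essentially free: the asymptotic stability statement \eqref{eq:as} only gives convergence on the region $x>t/10$, so the work is to upgrade it to convergence on all of $\R$, and this is exactly what the exponential decay on the left \eqref{eq:expdecayw} provides. More precisely, I would split $\nh{w(t)-Q_{c_+}(\cdot-\rho(t))}^2$ into the contributions from $\{x>\rho(t)-x_0\}$ and $\{x<\rho(t)-x_0\}$ for a parameter $x_0=x_0(t)$ to be chosen (say $x_0=t/20$, so that $\rho(t)-x_0>t/10$ eventually since $\rho(t)\sim t$). On the right region, the tail of $Q_{c_+}$ beyond $\rho(t)-x_0$ is $O(e^{-c\sqrt{c_+}x_0})$ by the exponential decay of solitons (claim \ref{th:solitons}), and $\nh{w(t)-Q_{c_+}(\cdot-\rho(t))}_{H^1(x>\rho(t)-x_0)}\to 0$ follows from \eqref{eq:as}; on the left region $\{x<\rho(t)-x_0\}=\{x-\rho(t)<-x_0\}$, the $w$-part is controlled by \eqref{eq:expdecayw} (valid for $x_0>2$, which holds for $t$ large), and the $Q_{c_+}$-part is again an exponentially small soliton tail. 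Letting $t\to+\infty$, both pieces vanish, which gives (i).

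Point (ii) is precisely the content of remark \ref{th:remarque}: we have set $w_0=\check v_0$, i.e. $w_0(x)=v_0(-x)$, and remark \ref{th:remarque} states that $w_0\neq Q_c(\cdot+x_0)$ for every $c>0$ and every $x_0\in\R$. Since $w(0)=w_0$ by definition, there is nothing more to prove here; I would simply cite remark \ref{th:remarque}, which in turn rests on proposition \ref{th:princ} ($v_0\neq Q_c$) together with the sign computation showing $\int Q_c(\cdot-x_0)Q'=0$ forces $x_0=0$, and on the fact that weak $H^1$ convergence preserves the orthogonality $(v_n,Q')=0$, hence $(v_0,Q')=0$.

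Finally, I would note that $w$ is globally defined for $t\geq 0$ by proposition \ref{th:cvf}, and that $c_+$ and $\rho$ are furnished by the Martel--Merle asymptotic stability theorem as recalled just before the statement, with $|c_+-1|\leq C\delta$ and $|\rho'(t)-1|\leq C\delta$ so that in particular $\rho(t)/t\to$ a limit close to $1$, which legitimizes the choice $x_0=t/20$ above. The only mildly delicate point is bookkeeping the regions correctly — making sure that for $t$ large the cutoff level $\rho(t)-x_0$ indeed lies to the right of $t/10$ (so \eqref{eq:as} applies) and to the right of $-x_0+\rho(t)$ with $x_0>2$ (so \eqref{eq:expdecayw} applies) — but this is routine given $\rho(t)=t+O(\delta t)$. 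No genuinely new estimate is needed: the theorem is just the synthesis of proposition \ref{th:cvf}, proposition \ref{th:expdecayw}/\eqref{eq:expdecayw}, remark \ref{th:remarque}, and \eqref{eq:as}.
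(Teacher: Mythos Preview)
Your proposal is correct and follows essentially the same route as the paper: split the $H^1$ norm at a moving threshold (the paper uses $x=t/10$ directly, you use $x=\rho(t)-t/20$, which is equivalent since $\rho(t)\sim t$), control the right piece by \eqref{eq:as} and the left piece by \eqref{eq:expdecayw} plus the exponential tail of $Q_{c_+}$, and invoke remark~\ref{th:remarque} for (ii). The only slip is expositional---your sentence about ``the tail of $Q_{c_+}$ beyond $\rho(t)-x_0$'' on the \emph{right} region is misplaced (the soliton is centered there, not small); that tail estimate belongs to the left region, where you already use it correctly.
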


\begin{proof}
By remark \ref{th:remarque}, it is enough to prove (i). We have by the triangle inequality \begin{align*} {\|w(t)-Q_{c_+}(\cdot-\rho(t))\|}_{H^1(\R)}^2 &\leq {\|w(t)-Q_{c_+}(\cdot-\rho(t))\|}_{H^1(x>t/10)}^2 +2{\|w(t)\|}_{H^1(x<t/10)}^2\\ &\qquad + 2{\|Q_{c_+}(\cdot-\rho(t))\|}_{H^1(x<t/10)}^2 = \mathbf{I+II+III}. \end{align*} Since $|\rho'(t)-1|\leq C\delta\leq \frac{1}{10}$ if we choose $\delta$ small enough, then $|\rho(t)-t-\rho(0)|\leq \frac{1}{10}t$, and so if we denote $\rho_0=\rho(0)\in\R$, we have $\frac{t}{10}-\rho(t)\leq -\frac{4}{5}t-\rho_0$. We can now estimate:
\begin{itemize}
\item $\mathbf{I} \xrightarrow[t\to +\infty]{} 0$ by \eqref{eq:as}.
\item For $t$ large enough, we have $\frac{4t}{5}+\rho_0>2$, and so \eqref{eq:expdecayw} gives \begin{align*} \frac{1}{2}\mathbf{II} &= \int_{x<t/10} (w_x^2+w^2)(t,x)\,dx = \int_{x<t/10-\rho(t)} (w_x^2+w^2)(t,x+\rho(t))\,dx\\ &\leq \int_{x<-4t/5-\rho_0} (w_x^2+w^2)(t,x+\rho(t))\,dx \leq Ce^{-t/5} \xrightarrow[t\to +\infty]{} 0. \end{align*}
\item Finally, since $(Q'^2_{c_+}+Q_{c_+}^2)(x)\leq Ce^{2\sqrt{c_+}x}$ for all $x\in\R$ (see claim \ref{th:solitons}), we have \begin{align*} \frac{1}{2}\mathbf{III} &= \int_{x<t/10} (Q'^2_{c_+}+Q_{c_+}^2)(x-\rho(t))\,dx = \int_{x<t/10-\rho(t)} (Q'^2_{c_+}+Q_{c_+}^2)(x)\,dx\\ &\leq \int_{x<-4t/5-\rho_0} (Q'^2_{c_+}+Q_{c_+}^2)(x)\,dx \leq C\int_{x<-4t/5-\rho_0} e^{2\sqrt{c_+}x}\,dx \leq Ce^{-\frac{8t}{5}\sqrt{c_+}} \xrightarrow[t\to +\infty]{} 0 \end{align*} which achieves the proof of theorem \ref{th:thprinc}. \qedhere
\end{itemize}
\end{proof}

\begin{cor} \label{th:scaling}
For all $c>0$, there exist $w_c(t)$ solution of \eqref{eq:gKdV} defined for all $t\geq 0$ and $t\mapsto \rho_c(t)$ such that: \begin{enumerate}[(i)] \item ${\|w_c(t,\cdot+\rho_c(t))-Q_c\|}_{H^1(\R)} \xrightarrow[t\to +\infty]{} 0$, \item $\forall c'>0, \forall x_0\in\R, w_c(0,\cdot+\rho_c(0))\neq Q_{c'}(\cdot+x_0)$. \end{enumerate}
\end{cor}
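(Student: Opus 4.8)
The plan is to obtain $w_c$ directly from the solution $w$ produced in Theorem \ref{th:thprinc}, using the scaling invariance of \eqref{eq:gKdV}. Recall that if $u(t,x)$ solves \eqref{eq:gKdV}, then so does $u_\lambda(t,x)=\lambda^{\frac{2}{p-1}}u(\lambda^3 t,\lambda x)$ for every $\lambda>0$, and that this scaling sends $Q_{c_0}$ to $Q_{\lambda^2 c_0}$ (after a compatible change of translation parameter), since $Q_{c_0}(x)=c_0^{\frac1{p-1}}Q(\sqrt{c_0}\,x)$. So, fixing $c>0$ and setting $\lambda=\sqrt{c/c_+}$ where $c_+>0$ is the velocity furnished by Theorem \ref{th:thprinc}, I would define
\[ w_c(t,x)=\lambda^{\frac{2}{p-1}}\,w(\lambda^3 t,\lambda x),\qquad \rho_c(t)=\lambda^{-1}\rho(\lambda^3 t). \]

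Next I would verify the three required properties. Since $w(s)$ is defined for all $s\geq0$ and $\lambda^3>0$, $w_c(t)$ is defined for all $t\geq0$. For (i), using that $Q_c(x)=\lambda^{\frac{2}{p-1}}Q_{c_+}(\lambda x)$ — which holds precisely because $\lambda^2 c_+=c$ — one gets
\[ w_c(t,x)-Q_c\bigl(x-\rho_c(t)\bigr)=\lambda^{\frac{2}{p-1}}\Bigl[w(\lambda^3 t,\lambda x)-Q_{c_+}\bigl(\lambda x-\rho(\lambda^3 t)\bigr)\Bigr], \]
and the substitution $y=\lambda x$ turns the $H^1$ norm in $x$ of the right-hand side into a fixed constant times a combination of $\nld{w(\lambda^3 t,\cdot)-Q_{c_+}(\cdot-\rho(\lambda^3 t))}$ and the analogous norm of the $x$-derivatives (the $L^2$ part of the norm rescales by $\lambda^{-1/2}$, the homogeneous $H^1$ part by $\lambda^{1/2}$, both irrelevant constants for the limit). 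As $\lambda^3 t\to+\infty$, property (i) of Theorem \ref{th:thprinc} forces this to tend to $0$. For (ii), if $w_c(0,\cdot+\rho_c(0))=Q_{c'}(\cdot+x_0)$ for some $c'>0$ and $x_0\in\R$, then undoing the scaling expresses $w(0,\cdot+\rho(0))$ — hence $w(0)$, after absorbing the even function $Q_{c'}$ into a translation — as $Q_{c''}(\cdot+x_0'')$ for suitable $c''>0$, $x_0''\in\R$, contradicting (ii) of Theorem \ref{th:thprinc}.

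I expect no real obstacle here: the whole content is bookkeeping of the scaling exponents and of how the $H^1$ norm transforms under $x\mapsto\lambda x$. The only points requiring mild care are tracking the translation parameters so that $\rho_c$ is exactly the right rescaling of $\rho$, and noting that the family $\{Q_{c'}(\cdot+x_0)\}_{c'>0,\,x_0\in\R}$ is invariant under translations, which lets one pass freely between the centered statement in the corollary and the uncentered one in Theorem \ref{th:thprinc}.
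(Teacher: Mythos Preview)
Your proposal is correct and follows essentially the same route as the paper: define $w_c$ by rescaling $w$ with $\lambda=\sqrt{c/c_+}$, set $\rho_c(t)=\lambda^{-1}\rho(\lambda^3 t)$, and use the behavior of the $H^1$ norm under dilation together with Theorem~\ref{th:thprinc} for both (i) and (ii). The paper carries out exactly this computation, with the only cosmetic difference that it invokes Remark~\ref{th:remarque} (equivalent to part (ii) of Theorem~\ref{th:thprinc}) for the contradiction in part (ii).
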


\begin{proof}
It is based on the scaling invariance of the \eqref{eq:gKdV} equation: if $u(t,x)$ is a solution, then for all $\lambda>0$, $\lambda^{\frac{2}{p-1}}u(\lambda^3t,\lambda x)$ is also a solution. For $c>0$ given, we thus define $w_c$ by $w_c(t)=\lambda_c^{\frac{2}{p-1}}w(\lambda_c^3t,\lambda_c x)$ with $\lambda_c=\sqrt\frac{c}{c_+}$, where $w$ and $c_+$ are defined above. Since $Q_c(x)=\lambda_c^{\frac{2}{p-1}}Q_{c_+}(\lambda_c x)$, then we have by substitution \begin{multline*} \nh{w(t)-Q_{c_+}(\cdot-\rho(t))}^2 = \lambda_c^{\frac{p-5}{p-1}} \Big( \nld{w_c(t/\lambda_c^3,\cdot+\rho(t)/\lambda_c)-Q_c}^2\\ +\frac{1}{\lambda_c^2} \nld{\partial_x[w_c(t/\lambda_c^3,\cdot+\rho(t)/\lambda_c)-Q_c]}^2 \Big). \end{multline*} We deduce that \[ \nh{w(t)-Q_{c_+}(\cdot-\rho(t))}^2 \geq \left\{ \begin{aligned} \lambda_c^{\frac{p-5}{p-1}} \nh{w_c(t/\lambda_c^3,\cdot+\rho(t)/\lambda_c)-Q_c}^2 &\quad \m{if } \lambda_c\leq 1\\ \lambda_c^{-\frac{p+3}{p-1}} \nh{w_c(t/\lambda_c^3,\cdot+\rho(t)/\lambda_c)-Q_c}^2 &\quad \m{if } \lambda_c\geq 1 \end{aligned} \right. , \] and so $\lim_{t\to+\infty} \nh{w_c(t/\lambda_c^3,\cdot+\rho(t)/\lambda_c)-Q_c} = 0$ in both cases by theorem \ref{th:thprinc}. We finally obtain (i) if we take $\rho_c(t)=\frac{\rho(\lambda_c^3 t)}{\lambda_c}$. For (ii), if we suppose that there exist $c'>0$ and $x_0\in\R$ such that $w_c(0,\cdot +\rho_c(0))=Q_{c'}(\cdot+x_0)$, then we get \[w_0 = Q_{\frac{c'c_+}{c}} \left(\cdot+\left( \sqrt\frac{c}{c_+}x_0-\rho_0\right) \right)\] which is a contradiction with remark \ref{th:remarque}.
\end{proof}

\section{Construction and uniqueness of a family of special solutions via the contraction principle} \label{sec:contraction}

In this section, we prove theorem \ref{th:main}. The proof is an extension to \eqref{eq:gKdV} of the method by fixed point developed in \cite{duymerle,duy}. To adapt the method to \eqref{eq:gKdV}, we use first information on the spectrum of the linearized operator around $Q(\cdot-t)$ due to \cite{pegoweinstein} (see proposition \ref{th:spectrum} in the present paper). Secondly, we rely on the Cauchy theory for \eqref{eq:gKdV} developed in \cite{kpv,kpvcritical}. Indeed, one of the main difficulties is the lack of a derivative due to the equation, but compensated by a smoothing effect already used in \cite{kpv,kpvcritical}.

\subsection{Preliminary estimates for the Cauchy problem}

Theorem 3.5 of \cite{kpv} and proposition 2.3 of \cite{kpvcritical} are summed up and adapted to our situation in proposition \ref{th:kpv} below. We note $W(t)$ the semigroup associated to the linear equation $\partial_t u+\partial_x^3u=0$.

\begin{notation}
Let $I\subset\R$ be an interval, $1\leq p,q\leq \infty$ and $g : \R\times I\to \R$. Then define \[ {\| g\|}_{L_x^pL_I^q} = \Puiss{\int_{-\infty}^{+\infty} \Puiss{\int_I {|g(x,t)|}^q\,dt}{p/q} dx}{1/p} \ ,\ {\| g\|}_{L_I^qL_x^p} = \Puiss{\int_I \Puiss{\int_{-\infty}^{+\infty} {|g(x,t)|}^p\,dx}{q/p} dt}{1/q} \] and $L_x^pL_I^q = \{ g\ |\ {\| g\|}_{L_x^pL_I^q}<+\infty\}$ and $L_I^qL_x^p = \{ g\ |\ {\| g\|}_{L_I^qL_x^p}<+\infty \}$. Finally, denote $L_x^pL_t^q = L_x^pL_{\R}^q$ and $L_t^qL_x^p = L_{\R}^qL_x^p$.
\end{notation}

\begin{prop} \label{th:kpv}
There exists $C>0$ such that for all $g\in L_x^1L_t^2$ and all $T\in\R$, \begin{align} {\left\| \drondx \int_t^{+\infty} W(t-t')g(x,t')\,dt' \right\|}_{L_{[T,+\infty)}^{\infty}L_x^2} &\leq C\nluld{g}, \label{eq:kpv1} \\ \Nlcld{\drondx \int_t^{+\infty} W(t-t')g(x,t')\,dt'} &\leq C\nluld{g} \label{eq:kpv2}. \end{align}
\end{prop}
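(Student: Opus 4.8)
The plan is to derive both estimates from the standard Kenig--Ponce--Vega machinery for the Airy group $W(t)$: the sharp Kato-type local smoothing estimate, the dual (maximal-function) smoothing estimate, and the Strichartz-type estimate for gKdV. Concretely, write $v(x,t) = \partial_x \int_t^{+\infty} W(t-t')g(x,t')\,dt'$. The operator $g \mapsto \partial_x \int_t^{+\infty} W(t-t')g(\cdot,t')\,dt'$ is (up to a harmless time-reversal $t \mapsto -t$, which is an isometry on all the spaces in play and maps $W(t)$ to $W(-t)$) the Duhamel operator for the inhomogeneous Airy equation, restricted to the half-line $[T,+\infty)$. The two bounds are then exactly the inhomogeneous versions of the linear estimates in \cite{kpv,kpvcritical}, and the whole point of Proposition \ref{th:kpv} is simply to record them in the normalization needed later.

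The key steps are as follows. First, recall the homogeneous smoothing estimate $\|\partial_x W(t)f\|_{L_x^\infty L_t^2} \leq C\|f\|_{L^2}$ and its dual form, together with the dual-smoothing (or "double smoothing") estimate which controls $\|\partial_x \int W(t-t')g(\cdot,t')\,dt'\|_{L_t^\infty L_x^2}$ by $\|g\|_{L_x^1 L_t^2}$; this is precisely Theorem 3.5 of \cite{kpv}. Restricting the time integral to $t' \in [t,+\infty) \subset [T,+\infty)$ and taking the sup over $t \in [T,+\infty)$ only tightens the norms, so \eqref{eq:kpv1} follows directly. Second, for \eqref{eq:kpv2}, combine the smoothing estimate with the Strichartz estimate $\|W(t)f\|_{L_x^5 L_t^{10}} \leq C\|f\|_{L^2}$ (the $L_x^5 L_t^{10}$ pair is the gKdV-adapted Strichartz norm used in \cite{kpvcritical} for $p=5$, hence its appearance here); applying $T T^*$-type duality to the composition $g \mapsto v$ and again restricting to $[T,+\infty)$ yields the claimed bound with a constant independent of $T$. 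Third, I would remark that the time-reversal symmetry $u(t,x) \leftrightarrow u(-t,x)$ of the Airy flow is what converts the "forward" Duhamel integrals $\int_{-\infty}^t$ appearing in \cite{kpv} into the "backward" integrals $\int_t^{+\infty}$ used here, which is natural since the construction in Section \ref{sec:contraction} builds solutions on $[t_0,+\infty)$ converging as $t \to +\infty$.

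The main obstacle is essentially bookkeeping rather than analysis: one must check that the constant $C$ can be taken independent of $T$. This is immediate because the operator on the whole line already satisfies the estimate, and restricting the temporal integration domain and the temporal norm to a sub-interval $[T,+\infty)$ can only decrease both sides' relevant quantities in the right direction — more precisely, for the left-hand sides the $L^\infty_{[T,+\infty)}$ and $L^2_{[T,+\infty)}$ norms are dominated by their $\R$-counterparts, while on the right $\|g\|_{L_x^1 L_{[T,+\infty)}^2} \leq \|g\|_{L_x^1 L_t^2}$ for $g$ extended by zero. Since \cite{kpv,kpvcritical} already establish the global-in-time versions, the proof reduces to invoking those theorems and performing this restriction, so I would keep the argument to a few lines citing \cite[Theorem 3.5]{kpv} and \cite[Proposition 2.3]{kpvcritical} directly.
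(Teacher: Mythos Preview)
Your proposal is correct and follows essentially the same approach as the paper: both derive \eqref{eq:kpv1} from the dual of the sharp Kato smoothing estimate in \cite[Theorem~3.5]{kpv} and \eqref{eq:kpv2} from the inhomogeneous Strichartz-type estimates in \cite[Proposition~2.3]{kpvcritical}, then pass to the half-line $[T,+\infty)$ by a cutoff argument. The paper's write-up is slightly more explicit about the mechanism --- for \eqref{eq:kpv1} it applies the dual estimate to $\tilde g = \mathbbm{1}_{[t,+\infty)}g$ and uses unitarity of $W(t)$, and for \eqref{eq:kpv2} it applies the retarded estimate to $\tilde g = \mathbbm{1}_{[T,+\infty)}g$ --- but this is exactly the ``bookkeeping'' you describe.
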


\begin{proof}
\begin{enumerate}[(i)]
\item Inequality \eqref{eq:kpv1} comes from the dual inequality of (3.6) in \cite{kpv}, \emph{i.e.} \[ {\left\| \drondx \int_{-\infty}^{+\infty} W(-t')g(x,t')\,dt' \right\|}_{L_x^2} \leq C{\|g\|}_{L_x^1L_t^2}. \] Let $t\geq T$, we get for $\tilde{g}(x,t') = \mathbbm{1}_{[t,+\infty)}(t')g(x,t')$: \[ {\left\| \drondx \int_t^{+\infty} W(-t')g(x,t')\,dt' \right\|}_{L_x^2} = {\left\| \drondx \int_{-\infty}^{+\infty} W(-t')\tilde{g}(x,t')\,dt' \right\|}_{L_x^2} \leq C\nluld{g} \] and so the desired inequality since $W$ is unitary on $L^2$.
\item Inequality \eqref{eq:kpv2} comes from inequalities (2.6) and (2.8) of \cite{kpvcritical} with the admissible triples $(p_1,q_1,\alpha_1)=(5,10,0)$ and $(p_2,q_2,\alpha_2)=(\infty,2,1)$. In fact, if we combine (2.6) cut in time with $[0,+\infty)$ and (2.8), we get \[ {\left\| \drondx \int_t^{+\infty} W(t-t') g(x,t')\,dt' \right\|}_{L_x^5L_t^{10}} \leq C {\|g\|}_{L_x^1L_t^2}. \] If we apply it to $\tilde{g}(x,t')=\mathbbm{1}_{[T,+\infty)}(t')g(x,t')$, we reach the desired inequality since \[ \Nlcld{\drondx \int_t^{+\infty} W(t-t')g(x,t')\,dt'} \leq {\left\| \drondx \int_t^{+\infty} W(t-t') \tilde{g}(x,t')\,dt' \right\|}_{L_x^5L_t^{10}}. \qedhere \]
\end{enumerate}
\end{proof}

\subsection{Preliminary results on the linearized equation}

\subsubsection{Linearized equation}

The linearized equation appears if one considers a solution of \eqref{eq:gKdV} close to the soliton $Q(x-t)$. More precisely, if $u(t,x)=Q(x-t)+h(t,x-t)$ verifies \eqref{eq:gKdV}, then $h$ verifies \begin{equation} \label{eq:linear} \partial_t h+\L h=R(h) \end{equation} where $\L a=-(La)_x$, $La=-\partial_x^2 a+a-pQ^{p-1}a$ is defined in section \ref{subsec:linear}, and \[ R(h)=-\partial_x\left( \sum_{k=2}^p \binom{p}{k} Q^{p-k}h^k \right).\] The spectrum of $\L$ has been calculated by Pego and Weinstein in \cite{pegoweinstein}; their results are summed up here for reader's convenience.

\begin{prop}[\cite{pegoweinstein}] \label{th:spectrum}
Let $\sigma(\L)$ be the spectrum of the operator $\L$ defined on $L^2(\R)$ and let $\sigma_{\mathrm{ess}}(\L)$ be its essential spectrum. Then \[\sigma_{\mathrm{ess}}(\L)=i\R \quad \m{and}\quad \sigma(\L)\cap\R = \{-e_0,0,e_0\} \m{ with } e_0>0. \] Furthermore, $e_0$ and $-e_0$ are simple eigenvalues of $\L$ with eigenfunctions $\Y_+$ and $\Y_- = \check{\Y}_+$ which have an exponential decay at infinity, and the null space of $\L$ is spanned by $Q'$.
\end{prop}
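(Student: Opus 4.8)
Since this proposition is due to Pego and Weinstein, the plan is to reconstruct the main steps of their Evans‑function argument rather than reprove everything. First I would rewrite $\L a = -(La)_x = \partial_x^3 a - \partial_x a + p\,\partial_x(Q^{p-1}a)$ and observe, using the exponential decay of $Q^{p-1}$ and its derivatives (claim \ref{th:solitons}), that $\L$ is a relatively compact perturbation of the constant‑coefficient operator $\L_0 a = \partial_x^3 a - \partial_x a$. Since $\widehat{\L_0 a}(\xi) = -i(\xi^3+\xi)\hat a(\xi)$ and $\xi\mapsto\xi^3+\xi$ is a bijection of $\R$, one gets $\sigma(\L_0)=\sigma_{\mathrm{ess}}(\L_0)=i\R$, and Weyl's theorem on the stability of the essential spectrum under relatively compact perturbations yields $\sigma_{\mathrm{ess}}(\L)=i\R$.

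Next I would settle $\lambda=0$: one has $\L Q' = -(LQ')_x = 0$ by claim \ref{th:L}(ii), and conversely $\L a=0$ forces $(La)_x=0$, hence $La$ constant, hence (for $a$ decaying) $La=0$ and $a\in\ker L = \R Q'$. I would also record the reflection symmetry $\L R = -R\L$, where $Ra(x)=a(-x)$; it holds because $Q$ is even while $\partial_x$ and $\partial_x^3$ are odd. Consequently the nonzero spectrum is invariant under $\lambda\mapsto-\lambda$, $Y\mapsto\check Y$, so it suffices to analyze eigenvalues in $\{\Re\lambda>0\}$, and on the real axis to find the positive ones.

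The heart of the proof — and the main obstacle — is to show that there is exactly one positive real eigenvalue. For this I would construct the Evans function $D(\lambda)$, analytic on $\{\Re\lambda>0\}$, from the one‑dimensional space of solutions of the asymptotic system $Y'''-Y'-\lambda Y=0$ decaying at $-\infty$ and the two‑dimensional space decaying at $+\infty$; its zeros there are exactly the eigenvalues, counted with multiplicity. Then I would expand $D$ near $\lambda=0$ to relate its order of vanishing, and its sign just to the right of $0$, to $\frac{d}{dc}\big|_{c=1}\int Q_c^2$, which by claim \ref{th:solitons}(iv) equals $\frac{5-p}{2(p-1)}\int Q^2<0$ precisely because $p>5$. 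Combining this with $D(\lambda)>0$ for large real $\lambda$, a continuity/degree argument pins down a unique and simple zero $e_0\in(0,+\infty)$. By the reflection symmetry, $-e_0$ is then the unique negative real eigenvalue, with eigenfunction $\Y_- = \check{\Y}_+$.

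Finally, simplicity of $\pm e_0$ follows from $e_0$ being a simple zero of $D$, and the exponential decay of $\Y_\pm$ from the fact that $e_0\notin i\R$: the characteristic equation $\mu^3-\mu-e_0=0$ has no purely imaginary root, so the asymptotic system is hyperbolic at both ends, and any $L^2$ solution of $\L Y=e_0 Y$ lies on the stable manifold at $+\infty$ and the unstable manifold at $-\infty$, hence decays like $e^{-\mu_0|x|}$ for some $\mu_0>0$; the same applies to $\Y_-$. I expect the delicate point throughout to be the rigorous construction of the Evans function up to (and ideally across) the imaginary axis, together with the precise computation linking its low‑frequency behavior to the supercriticality condition $\frac{d}{dc}\int Q_c^2<0$.
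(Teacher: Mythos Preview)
The paper does not prove this proposition; it quotes it from \cite{pegoweinstein} without reproducing the argument (only the exponential decay of $\Y_\pm$ is reproved independently, in corollary~\ref{th:Ydecay}). Your sketch is a faithful outline of the Pego--Weinstein Evans-function approach: the essential spectrum via Weyl's theorem, the kernel computation, the reflection symmetry $\L R=-R\L$, and the link between the low-frequency expansion of $D(\lambda)$ and the sign of $\frac{d}{dc}\big|_{c=1}\int Q_c^2$ are all correct.

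One point deserves tightening. Comparing the sign of $D$ just to the right of $0$ with its sign for large real $\lambda$ only yields an \emph{odd} number of positive real zeros (with multiplicity), not exactly one simple zero. In \cite{pegoweinstein} the uniqueness in $\{\Re\lambda>0\}$ comes from a global count: a winding-number argument for $D$ along a large semicircular contour bounded by the imaginary axis shows that the total algebraic multiplicity of eigenvalues with $\Re\lambda>0$ equals~$1$. Your intermediate-value step gives existence; for the full statement $\sigma(\L)\cap\R=\{-e_0,0,e_0\}$ together with simplicity you still need that index computation (and, as you anticipate, the nontrivial extension of $D$ up to $i\R$).
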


\subsubsection{Exponential decay}

Exponential decay of $\Y_+$ has been proved in \cite{pegoweinstein}, but a generalization of this fact to a larger family of functions will be necessary in the proof of proposition \ref{th:approxsol}. For $\lambda>0$, consider the operator $A_{\lambda}$ defined on $L^2$ by $A_{\lambda} u = u'''-u'-\lambda u$, and the characteristic equation of $A_{\lambda}u=0$, \[ f_{\lambda}(x):=x^3-x-\lambda=0. \] Note $\sig1$, $\sig2$, $\sig3$ the roots of $f_{\lambda}$, eventually complex, and sorted by their real part. A simple study of $f_{\lambda}$ shows that $\sig3$ is always real, $\sig3>\frac{1}{\sqrt 3}$, and ${(\sig3)}_{\lambda>0}$ is increasing. Moreover, we have the three cases:
\begin{enumerate}[(a)]
\item If $\lambda>\frac{2}{3\sqrt3}$, then $\sig1$ and $\sig2$ are two conjugate roots which verify $\Re\sig1 =\Re\sig2=-\frac{\sig3}{2}$.
\item If $\lambda=\frac{2}{3\sqrt3}$, then $\sig1=\sig2=-\frac{1}{\sqrt3}$ and $\sig3=\frac{2}{\sqrt 3}$.
\item If $\lambda<\frac{2}{3\sqrt3}$, then $\sig1$, $\sig2$ are real and: $\sig1\in \left( -\sqrt3,-\frac{1}{\sqrt3}\right)$ ; $\sig2\in \left( -\frac{1}{\sqrt3},0\right)$. Moreover, ${(\sig2)}_{\lambda}$ is decreasing, and in particular $\sig2\nearrow 0$ when $\lambda\searrow 0$.
\end{enumerate}
This analysis allows us to define \[ \mu = \frac{1}{4} \min_{\lambda\geq e_0} (\sig3,-\Re\sig2,e_0,1) >0 \] and \[ \H = \{ f\in H^{\infty}(\R)\ |\ \forall j\geq0, \exists C_j\geq 0,\forall x\in\R,\ |f^{(j)}(x)|\leq C_je^{-\mu|x|} \}. \]

\begin{lem} \label{th:expdecayA}
If $u\in L^2$ and $f\in\H$ verify $u'''-u'-\lambda u=f$ with $\lambda\geq e_0$, then $u\in\H$.
\end{lem}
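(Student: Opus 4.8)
The plan is to solve the constant-coefficient ODE $A_\lambda u=u'''-u'-\lambda u=f$ explicitly by means of a decaying Green's function, and then to use the sign pattern of the roots $\sig1,\sig2,\sig3$ (namely $\Re\sig1\le\Re\sig2<0<\sig3$, which holds in each of the cases (a), (b), (c) listed above, precisely because $\lambda\ge e_0>0$ keeps $\sig2$ away from $0$) to conclude that $u$ is exactly the particular solution we construct.

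First I would build the Green's function $G$ of $A_\lambda$, i.e. the solution of $A_\lambda G=\delta_0$ chosen so as to decay at both ends: on $(0,+\infty)$ take $G$ to be a combination of the modes $e^{\sig1 x}$ and $e^{\sig2 x}$ (a combination of $e^{\sig1 x}$ and $xe^{\sig1 x}$ in the confluent case $\lambda=\tfrac{2}{3\sqrt3}$, where $\sig1=\sig2$), and on $(-\infty,0)$ a multiple of $e^{\sig3 x}$. The three coefficients are determined by continuity of $G$ and $G'$ at $0$ together with the jump $G''(0^+)-G''(0^-)=1$; the associated linear system has a (possibly confluent) Vandermonde determinant in $\sig1,\sig2,\sig3$, which is nonzero since $\sig2<0<\sig3$, so $G$ exists, and $A_\lambda G=\delta_0$ follows from the construction. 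Because $\Re\sig1\le\Re\sig2<0<\sig3$, every derivative of $G$ decays: $|G^{(j)}(x)|\le C_j e^{-\nu|x|}$ for all $j\ge0$, for any $\nu$ strictly below $\min(\sig3,-\Re\sig2)$ (the possible extra factor $|x|$ in the confluent case being absorbed by lowering $\nu$). This is the one place where the hypotheses are used: since $\lambda\ge e_0$, the definition of $\mu$ gives $4\mu\le\min(\sig3,-\Re\sig2)$, so we may take $\nu=2\mu>\mu$.

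Next, set $u_p=G*f$. Then $A_\lambda u_p=(A_\lambda G)*f=f$. Since $f\in\H$ (so all $f^{(j)}$ are bounded) and $G\in L^1$, differentiation passes onto $f$, whence $u_p^{(j)}=G*f^{(j)}$ for every $j$; by Young's inequality $u_p\in H^\infty$, and the elementary convolution bound $\int e^{-\nu|x-y|}e^{-\mu|y|}\,dy\le C\,e^{-\mu|x|}$ (valid because $\nu>\mu$) gives $|u_p^{(j)}(x)|\le C_j e^{-\mu|x|}$. Hence $u_p\in\H$. Finally, $u-u_p$ is an $L^2$ solution of the homogeneous equation $A_\lambda v=0$; such a $v$ is necessarily smooth and of the form $\sum_j c_j e^{\sig j x}$ (with $xe^{\sig1 x}$ replacing one term in the confluent case). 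Since $\Re\sig1,\Re\sig2<0<\sig3$, every nonzero combination of this type grows exponentially as $x\to+\infty$ (if the $\sig3$ coefficient is nonzero) or as $x\to-\infty$ (otherwise), and so cannot lie in $L^2(\R)$. Therefore $u-u_p=0$, i.e. $u=u_p\in\H$.

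The argument is essentially routine linear ODE theory; the only genuinely delicate points are (i) verifying that the decay rate of $G$, namely $\min(\sig3,-\Re\sig2)$, strictly exceeds $\mu$ — which is exactly what the factor $\tfrac14$ and the minimisation over $\lambda\ge e_0$ in the definition of $\mu$ are arranged to guarantee, and where the assumption $\lambda\ge e_0$ (preventing $\sig2\nearrow0$) is essential — and (ii) bookkeeping in the confluent case $\lambda=\tfrac{2}{3\sqrt3}$. One could equally argue on the Fourier side: $\widehat u(\xi)=\widehat f(\xi)\big/(-i\xi^3-i\xi-\lambda)$, the multiplier is holomorphic and bounded on the strip $|\Im\xi|\le\mu$ because all of its poles, located at $-i\sig j$, lie at distance $\ge4\mu$ from $\R$, and $\widehat f$ extends holomorphically to the same strip with suitable decay since $f\in\H$; shifting the contour of the inverse Fourier integral to $\Im\xi=\pm\mu$ then yields $|u^{(j)}(x)|\le C_j e^{-\mu|x|}$ directly.
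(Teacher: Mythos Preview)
Your argument is correct and is essentially the same as the paper's: both solve the constant-coefficient ODE by producing the unique $L^2$ particular solution and then estimate its decay using the sign pattern of the roots together with the factor $\tfrac14$ in the definition of $\mu$. The only difference is packaging: the paper writes out the variation-of-constants formula
\[
u(x)=A e^{\sig3 x}\!\int_x^{+\infty}\! e^{-\sig3 s}f(s)\,ds+B e^{\sig2 x}\!\int_{-\infty}^x\! e^{-\sig2 s}f(s)\,ds+C e^{\sig1 x}\!\int_{-\infty}^x\! e^{-\sig1 s}f(s)\,ds
\]
and bounds each integral by hand, whereas you phrase the same object as a convolution $G*f$ with a decaying Green's function and invoke the standard bound $\int e^{-\nu|x-y|}e^{-\mu|y|}\,dy\le C e^{-\mu|x|}$ for $\nu>\mu$; your separate treatment of $u-u_p$ makes explicit what the paper leaves implicit when it fixes the integration limits. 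The Fourier-side variant you sketch at the end is a genuinely different (and slicker) route, but your main argument and the paper's coincide.
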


\begin{proof}
First notice that $u\in H^{\infty}(\R)$ by a simple bootstrap argument. Moreover, the method of variation of constants gives us \[ u(x) = Ae^{\sig3x} \int_x^{+\infty} e^{-\sig3s}f(s)\,ds +Be^{\sig2x} \int_{-\infty}^x e^{-\sig2s}f(s)\,ds +Ce^{\sig1x} \int_{-\infty}^x e^{-\sig1s}f(s)\,ds \] with $A,B,C\in\C$, if we suppose that $\lambda\neq \frac{2}{3\sqrt 3}$. We can also notice that $u'$ has the same form as $u$, except for three terms in $f(x)$ which appear, and which have the expected decay by hypothesis, and so on for $u^{(j)}$ for $j\geq 2$. Hence we only have to check exponential decay for $u$: \begin{multline*} |u(x)| \leq A'e^{\sig3x} \int_x^{+\infty} e^{-\sig3s}|f(s)|\,ds +B'e^{\Re\sig2x} \int_{-\infty}^x e^{-\Re\sig2s}|f(s)|\,ds\\ +C'e^{\Re\sig1x} \int_{-\infty}^x e^{-\Re\sig1s}|f(s)|\,ds. \end{multline*} By changing $x$ in $-x$ and by the definition of $\mu$, it is enough to show that if \[ v(x)=e^{-ax}\int_{-\infty}^x e^{as}e^{-\mu|s|}\,ds\] with $a\geq 2\mu$, then $v(x)\leq e^{-\mu|x|}$. Notice that one half could also have replaced one quarter in the definition of $\mu$, but this gain of $2$ allows us to treat the case $\lambda=\frac{2}{3\sqrt 3}$ (not written here for brevity), which makes appear a polynomial in front of the exponential in the last two terms of the expression of $u$. Finally, we conclude in both cases, since $a-\mu\geq \mu>0$:
\begin{itemize}
\item If $x<0$, then $v(x)\leq e^{-ax}\int_{-\infty}^x e^{as}e^{\mu s}\,ds = Ce^{-ax}\cdot e^{(a+\mu)x} = Ce^{\mu x} = Ce^{-\mu|x|}$.
\item If $x\geq 0$, then $v(x)\leq e^{-ax}\int_{-\infty}^x e^{as}e^{-\mu s}\,ds = Ce^{-ax}\cdot e^{(a-\mu)x} = Ce^{-\mu x} = Ce^{-\mu|x|}$.
\end{itemize}
The case $\lambda=\frac{2}{3\sqrt 3}$ is treated similarly.
\end{proof}

\begin{cor} \label{th:Ydecay}
$\Y_+,\Y_- \in\H$.
\end{cor}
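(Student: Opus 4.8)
The plan is to observe that $\Y_+$ solves precisely an equation of the type handled by Lemma \ref{th:expdecayA}, with a source term lying in $\H$, so that the lemma applies verbatim. Since $\L a=-(La)_x=\partial_x^3 a-\partial_x a+p\,\partial_x(Q^{p-1}a)$ and $\L\Y_+=e_0\Y_+$, the function $u=\Y_+$ satisfies
\[ u'''-u'-e_0 u=-p\,\partial_x\bigl(Q^{p-1}\Y_+\bigr)=:f, \]
that is, $A_{e_0}u=f$ with $\lambda=e_0\geq e_0$; moreover $\Y_+\in L^2(\R)$ since it is an $L^2$-eigenfunction of $\L$. Hence it suffices to check that $f\in\H$.

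To that end, I would first note $\Y_+\in H^{\infty}(\R)$ by the same bootstrap as in the proof of Lemma \ref{th:expdecayA}: rewriting the eigenvalue equation as $\Y_+'''=e_0\Y_++\Y_+'-p\,\partial_x(Q^{p-1}\Y_+)$ and using that multiplication by $Q^{p-1}$ (a smooth bounded function with bounded derivatives) preserves every $H^s$, one gains two derivatives at each step; in particular all derivatives of $\Y_+$ are bounded on $\R$. On the other hand, by claim \ref{th:solitons}(ii)--(iii) applied with $c=1$, $Q$ and each of its derivatives are $O(e^{-|x|})$, and expanding $\partial_x^j(Q^{p-1})$ by the Leibniz rule as a finite sum of products of $p-1$ factors of the form $Q^{(a)}$ shows $|\partial_x^j(Q^{p-1})(x)|\leq C_j e^{-(p-1)|x|}$ for every $j\geq0$. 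Since $\mu\leq\tfrac14<1<p-1$, we have $e^{-(p-1)|x|}\leq e^{-\mu|x|}$, so combining with the boundedness of the derivatives of $\Y_+$ and the Leibniz rule once more, $Q^{p-1}\Y_+$ and all its derivatives are $O(e^{-\mu|x|})$; together with $Q^{p-1}\Y_+\in H^{\infty}(\R)$ this gives $Q^{p-1}\Y_+\in\H$. As $\H$ is stable under differentiation, $f=-p\,\partial_x(Q^{p-1}\Y_+)\in\H$.

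Lemma \ref{th:expdecayA} then yields $\Y_+\in\H$. For $\Y_-$, I would simply use $\Y_-=\check{\Y}_+$, i.e.\ $\Y_-(x)=\Y_+(-x)$: if $g(x)=f(-x)$ then $g^{(j)}(x)=(-1)^j f^{(j)}(-x)$, so the pointwise bounds defining $\H$ are preserved by the reflection $x\mapsto-x$, whence $\Y_-\in\H$ as well. None of the steps is a serious obstacle; the only point requiring a little care is to establish the decay of $f$ from the exponential decay of $Q^{p-1}$ together with mere boundedness (not decay) of the derivatives of $\Y_+$ — which is why it is convenient that the exponent $p-1$ is comfortably larger than $\mu$ — so that one never has to invoke the Pego--Weinstein decay of $\Y_\pm$ as an input (indeed this argument reproves it).
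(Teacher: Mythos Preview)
Your proof is correct and follows essentially the same approach as the paper's: rewrite $\L\Y_+=e_0\Y_+$ as $A_{e_0}\Y_+=f$ with $f=-p\,\partial_x(Q^{p-1}\Y_+)$, bootstrap to $\Y_+\in H^{\infty}$, use the $e^{-(p-1)|x|}$ decay of $Q^{p-1}$ and its derivatives together with the boundedness of $\Y_+^{(j)}$ to get $f\in\H$, then apply Lemma~\ref{th:expdecayA} and pass to $\Y_-$ via the reflection $\Y_-=\check{\Y}_+$. Your write-up is slightly more explicit about the Leibniz expansions and the invariance of $\H$ under differentiation and reflection, but the argument is the same.
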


\begin{proof}
Since $\Y_-= \check{\Y}_+$, it is enough to show that $\Y_+\in\H$. But by definition of $\Y_+$ in \cite{pegoweinstein}, we have $\L\Y_+=e_0\Y_+$ with $\Y_+\in L^2$, \emph{i.e.} \[ \Y'''_+-\Y'_+-e_0\Y_+=-p\partial_x(Q^{p-1}\Y_+) = -p(p-1)Q'Q^{p-2}\Y_+ -pQ^{p-1}\Y'_+. \] By a bootstrap argument, we have $\Y_+\in H^{\infty}(\R)$, and in particular $\Y_+^{(j)}\in L^{\infty}(\R)$ for all $j\geq 0$. If we denote $f(x)=-p(p-1)Q'Q^{p-2}\Y_+ -pQ^{p-1}\Y'_+$, then by exponential decay of $Q^{(j)}$ for all $j\geq 0$ and by definition of $\mu$, we have $|f^{(j)}(x)|\leq Ce^{-(p-1)|x|} \leq Ce^{-\mu|x|}$ and so $f\in\H$. It is enough to apply lemma \ref{th:expdecayA} with $\lambda=e_0$ to conclude.
\end{proof}

\subsection{Existence of special solutions}

We now prove the following result, which is the first part of theorem \ref{th:main}.

\begin{prop} \label{th:ua}
Let $A\in\R$. If $t_0=t_0(A)$ is large enough, then there exists a solution $U^A\in C^{\infty}\left([t_0,+\infty),H^{\infty}\right)$ of \eqref{eq:gKdV} such that \begin{equation} \label{eq:ua} \forall s\in\R, \exists C>0, \forall t\geq t_0,\quad \nhs{U^A(t,\cdot+t)-Q-Ae^{-e_0t}\Y_+}\leq Ce^{-2e_0 t}. \end{equation}
\end{prop}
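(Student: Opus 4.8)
The plan is to construct $U^A$ by a fixed-point argument on the perturbation $h$ defined by $U^A(t,x+t)=Q(x)+h(t,x)$, which by \eqref{eq:linear} must solve $\partial_t h+\L h=R(h)$. Since we want $h(t)\to 0$ like $Ae^{-e_0t}\Y_+$, the natural ansatz is to write $h=Ae^{-e_0t}\Y_+ + g$ where $g$ is the genuinely nonlinear correction, expected to be of order $e^{-2e_0t}$. Plugging in and using $\L\Y_+=e_0\Y_+$, the equation for $g$ becomes $\partial_t g+\L g = R(Ae^{-e_0t}\Y_++g)$, and we recast this as an integral equation running backward from $+\infty$: schematically $g(t)=\int_t^{+\infty} e^{-(t-t')\L}\,\Pi\, R(\cdot)\,dt'$, where $\Pi$ projects onto the part of the spectrum with $\Re\sigma\geq 0$ (the stable-in-backward-time directions, i.e. the one spanned by $\Y_+$ and the continuous spectrum $i\R$ together with $0$); the unstable direction $\Y_-$ (eigenvalue $-e_0$, which grows backward in time) must be killed by an appropriate choice, which is exactly what fixes the extra free constant and forces the leading term to be $Ae^{-e_0t}\Y_+$ with no $\Y_-$ component. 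I would first set up the spectral decomposition from Proposition \ref{th:spectrum}, define the projections and the corresponding semigroup estimates (the part on $i\R$ is unitary-like, the $\Y_+$ part decays backward), and then run a contraction in a weighted space like $\{g : \sup_{t\geq t_0} e^{2e_0 t}\,\nhs{g(t)} <\infty\}$ — but crucially, because $\L$ loses a derivative, the $H^s$ norm alone will not close, and one must instead work in the mixed space-time norms $L_x^1L^2_{[T,\infty)}$ and $L_x^5L^{10}_{[T,\infty)}$ of Proposition \ref{th:kpv}, exploiting the smoothing effect.

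The key steps, in order, are: (1) decompose $L^2 = \mathrm{span}(\Y_+)\oplus\mathrm{span}(\Y_-)\oplus Z$, with $Z$ the $\L$-invariant complement carrying the spectrum $\{0\}\cup i\R$, and record the corresponding estimates for $e^{-t\L}$ restricted to each piece (exponential decay on $\mathrm{span}(\Y_+)$ going backward in time, polynomial/bounded behavior on $Z$, combined with the Kenig–Ponce–Vega smoothing from Proposition \ref{th:kpv} to recover the lost derivative); (2) observe that $R(h)$ is a sum of terms $\partial_x(Q^{p-k}h^k)$ with $k\geq 2$, which are genuinely quadratic or higher in $h$ and carry a Schwartz-decaying factor $Q^{p-k}$, so that $\nluld{R(h)}$ is controlled by $\|h\|^2$ in the relevant mixed norm when $h$ is small; (3) define the map $\Phi(g)(t) = -\int_t^{+\infty} e^{-(t-t')\L}\,P_{Z}\,R(Ae^{-e_0t'}\Y_++g(t'))\,dt' - \int_t^{+\infty} e^{-(t-t')\L}\,P_{+}R(\cdots)\,dt'$ (with $P_+$ the $\Y_+$-projection; the $\Y_-$-projection piece is set to zero, which is consistent precisely because integrating from $+\infty$ suppresses the backward-growing direction), and check that for $t_0$ large $\Phi$ maps a small ball of the weighted space into itself and is a contraction there, using the estimates from (1) and (2); (4) from the fixed point $g$, set $h=Ae^{-e_0t}\Y_++g$, $U^A(t,x)=Q(x-t)+h(t,x-t)$, verify it solves \eqref{eq:gKdV} on $[t_0,+\infty)$, and upgrade regularity to $C^\infty([t_0,\infty),H^\infty)$ by a bootstrap using corollary \ref{th:Ydecay} (the exponential decay of $\Y_\pm$) and the smoothing estimates again, first obtaining the bound \eqref{eq:ua} for $s$ in some range and then propagating it to all $s$; (5) read off \eqref{eq:ua}: $\nhs{U^A(t,\cdot+t)-Q-Ae^{-e_0t}\Y_+} = \nhs{g(t)} \leq Ce^{-2e_0 t}$ by construction of the weighted space.

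The main obstacle, as the paper itself flags, is the loss of a derivative in the equation: the linear flow $e^{-t\L}$ does not smooth in $H^s$, so a naive Duhamel contraction in $C([t_0,\infty),H^s)$ fails. The resolution — and the technical heart of the argument — is to do the fixed point in the Kenig–Ponce–Vega mixed norms $L_x^1 L_{[T,\infty)}^2$ and $L_x^5 L_{[T,\infty)}^{10}$, where Proposition \ref{th:kpv} provides exactly the gain of one derivative needed to absorb the $\partial_x$ in $R(h)$, after first splitting off the spectral projections so that the oscillatory/dispersive estimates apply to the continuous-spectrum part while the two discrete eigenvalues are handled by hand. A secondary subtlety is checking that the $\Y_-$-component of the Duhamel integral genuinely vanishes (so the leading asymptotics is clean), which follows because the inhomogeneity is integrable against the backward-growing mode only if its projection is forced to zero, and the integral-from-$+\infty$ formulation encodes precisely this constraint; making this rigorous requires a little care with the exponential weights but no new ideas. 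Finally, the nonlinear estimate $\nluld{R(h_1)-R(h_2)}\lesssim (\|h_1\|+\|h_2\|)\|h_1-h_2\|$ in the mixed norms, using the rapid decay of the coefficients $Q^{p-k}$ and Hölder in the mixed Lebesgue spaces, is routine but must be done carefully to get the contraction constant below $1$ for $t_0$ large.
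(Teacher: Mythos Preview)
Your plan diverges from the paper in a way that leaves a real gap. You propose to run the Duhamel argument with the \emph{linearized} semigroup $e^{-t\L}$, split via spectral projections onto $\mathrm{span}(\Y_+)$, $\mathrm{span}(\Y_-)$, and the complement $Z$, and then invoke Proposition~\ref{th:kpv} to recover the lost derivative. But Proposition~\ref{th:kpv} gives smoothing for the \emph{free} Airy group $W(t)$, not for $e^{-t\L}$; transferring those estimates to the perturbed evolution on $Z$ (which carries the continuous spectrum $i\R$ together with a nontrivial Jordan block at $0$, since $\L S=Q'$ and $\L Q'=0$) is a substantial separate problem that the paper never addresses and that you do not explain how to carry out. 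Without such estimates your step~(1) is unsubstantiated, and the contraction in step~(3) cannot be closed as described.

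If instead you try to keep the free propagator $W(t)$ (which is what Proposition~\ref{th:kpv} actually controls) and treat $p\,\partial_x(Q^{p-1}g)$ as a perturbation, then subtracting only the first-order profile $Ae^{-e_0t}\Y_+$ is not enough: at the weight $e^{2e_0t}$ you propose, the paper's own computation of the term~$\mathbf{I}$ in step~2 shows this linear potential contribution is bounded by $\dfrac{C}{\sqrt{k}}\Lambda(v)$ with $k$ effectively fixed near $3/2$, and there is no reason for $C/\sqrt{k}<1$. The paper's actual mechanism for making this term small is precisely what you are missing: one first builds, via Proposition~\ref{th:approxsol}, an approximate profile $\V_k^A=\sum_{j=1}^k e^{-je_0t}\Z_j^A$ solving \eqref{eq:linear} up to an error of order $e^{-(k+1)e_0t}$, and then runs the fixed point for $v=h-\V_k^A$ in the weighted norm $\sup_T e^{(k+\frac12)e_0T}\nhs{v(T)}$ together with the mixed norm $N_2$. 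At that high weight the linear potential term picks up the factor $C/\sqrt{k}$, so choosing $k$ large first (and then $t_k$ large) yields a genuine contraction with the \emph{free} propagator and Proposition~\ref{th:kpv} exactly as stated. Your outline contains no analogue of this high-order ansatz; that, not the spectral splitting, is the idea that makes the argument close.
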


\subsubsection{A family of approximate solutions}

The following proposition is similar to \cite[proposition 3.4]{duy}, except for the functional space, which is not the Schwartz space but the space $\H$ described above.

\begin{prop} \label{th:approxsol}
Let $A\in\R$. There exists a sequence ${(\mathcal{Z}_j^A)}_{j\geq 1}$ of functions of $\H$ such that $\mathcal{Z}_1^A=A\Y_+$, and if $k\geq 1$ and $\V_k^A = \sum_{j=1}^k e^{-je_0t} \mathcal{Z}_j^A$, then \begin{equation}\label{eq:approxsol} \partial_t \V_k^A +\L\V_k^A = R(\V_k^A) + \eps_k^A(t), \quad \m{where}\quad  \eps_k^A(t)=\sum_{j=k+1}^{pk} e^{-je_0t} g_{j,k}^A,\quad g_{j,k}^A\in\H, \end{equation} and $R$ is defined in \eqref{eq:linear}.
\end{prop}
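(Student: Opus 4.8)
The plan is to construct the functions $\mathcal{Z}_j^A$ by induction on $j$, following the formal power series approach: we look for an approximate solution of \eqref{eq:linear} of the form $\V^A(t) = \sum_{j\geq 1} e^{-je_0 t}\mathcal{Z}_j^A$, plug it into the equation, and match powers of $e^{-e_0 t}$. The zeroth-order requirement (coefficient of $e^{-e_0 t}$) is $-e_0\mathcal{Z}_1^A + \L\mathcal{Z}_1^A = 0$, i.e. $\mathcal{Z}_1^A$ is an eigenfunction of $\L$ for the eigenvalue $e_0$; by proposition \ref{th:spectrum} this forces $\mathcal{Z}_1^A = A\Y_+$ (up to the choice of the constant $A$), and $\Y_+\in\H$ by corollary \ref{th:Ydecay}, so the base case holds.

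For the inductive step, suppose $\mathcal{Z}_1^A,\dots,\mathcal{Z}_{k}^A\in\H$ have been constructed so that, writing $\V_k^A = \sum_{j=1}^{k} e^{-je_0 t}\mathcal{Z}_j^A$, the error $\partial_t\V_k^A + \L\V_k^A - R(\V_k^A)$ is a finite sum $\sum_{j=k+1}^{pk} e^{-je_0 t} g_{j,k}^A$ with $g_{j,k}^A\in\H$. The point is that $R(\V_k^A) = -\partial_x\sum_{m=2}^p\binom{p}{m}Q^{p-m}(\V_k^A)^m$ expands, upon substituting $\V_k^A = \sum_{j=1}^k e^{-je_0 t}\mathcal{Z}_j^A$ and collecting powers of $e^{-e_0 t}$, into a finite sum of terms $e^{-je_0 t}$ with $2\leq j\leq pk$, each with a coefficient built polynomially from $Q$ and the $\mathcal{Z}_i^A$ and their $x$-derivatives — hence lying in $\H$ since $\H$ is an algebra stable under differentiation and multiplication by $Q^{p-m}$ (which decays like $e^{-(p-m)|x|}$, and $\mu\leq 1$). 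To pass from $\V_k^A$ to $\V_{k+1}^A$ we set $\mathcal{Z}_{k+1}^A$ to solve the $(k{+}1)$-st order equation
\[
-(k+1)e_0\,\mathcal{Z}_{k+1}^A + \L\mathcal{Z}_{k+1}^A = -g_{k+1,k}^A,
\]
i.e. $(\L - (k+1)e_0)\mathcal{Z}_{k+1}^A = -g_{k+1,k}^A$. Since $(k+1)e_0 > e_0$ and $\sigma(\L)\cap\R = \{-e_0,0,e_0\}$, the real number $(k+1)e_0$ is not in $\sigma(\L)$, so $\L - (k+1)e_0$ is invertible on $L^2$ and $\mathcal{Z}_{k+1}^A\in L^2$ is well defined. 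Writing out this equation explicitly, $\mathcal{Z}_{k+1}^A$ satisfies an ODE of the form $u''' - u' - (k+1)e_0\, u = f$ with $f = g_{k+1,k}^A + p\,\partial_x(Q^{p-1}\mathcal{Z}_{k+1}^A) - p\,\partial_x(Q^{p-1}u) + \dots$; more cleanly, rearranging, $\mathcal{Z}_{k+1}^A$ solves $u''' - u' - (k+1)e_0 u = h$ where $h\in\H$ is built from $g_{k+1,k}^A$ and lower-order data (using that the $Q^{p-1}$-terms have fast decay). Then lemma \ref{th:expdecayA} with $\lambda = (k+1)e_0\geq e_0$ applies and gives $\mathcal{Z}_{k+1}^A\in\H$.

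Finally, one checks that with this choice of $\mathcal{Z}_{k+1}^A$, the new error $\partial_t\V_{k+1}^A + \L\V_{k+1}^A - R(\V_{k+1}^A)$ has lowest-order term $e^{-(k+2)e_0 t}$ and highest-order term $e^{-p(k+1)e_0 t}$, with all coefficients in $\H$: the $e^{-(k+1)e_0 t}$ term has been cancelled by construction, the extra contributions to $R$ coming from the new summand $e^{-(k+1)e_0 t}\mathcal{Z}_{k+1}^A$ are all of order $\geq e^{-(k+2)e_0 t}$ (since $R$ is at least quadratic), and the upper cutoff $p(k+1)$ follows from the degree count in the polynomial nonlinearity. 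This closes the induction and establishes \eqref{eq:approxsol}. The main obstacle is purely bookkeeping: tracking precisely which powers $e^{-je_0 t}$ appear in the expansion of $R(\V_k^A)$ and verifying that every coefficient genuinely lies in $\H$ (which reduces to $\H$ being a differential algebra closed under multiplication by the relevant powers of $Q$) and that the invertibility of $\L - je_0$ for $j\geq 2$ together with lemma \ref{th:expdecayA} yields the exponential decay of each $\mathcal{Z}_j^A$; the spectral input from proposition \ref{th:spectrum} is exactly what makes the inductive construction go through at every order $j\geq 2$, in contrast to the obstruction at order $j=1$.
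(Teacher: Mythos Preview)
Your proposal is correct and follows essentially the same approach as the paper: induction on $k$, with $\mathcal{Z}_{k+1}^A := -(\L-(k+1)e_0)^{-1}g_{k+1,k}^A$, invertibility coming from proposition \ref{th:spectrum}, and membership in $\H$ from lemma \ref{th:expdecayA}. One small imprecision worth tightening: when you write that $\mathcal{Z}_{k+1}^A$ solves $u'''-u'-(k+1)e_0 u = h$ with $h\in\H$ ``built from $g_{k+1,k}^A$ and lower-order data'', in fact $h$ also contains the terms $-p(p-1)Q'Q^{p-2}\mathcal{Z}_{k+1}^A - pQ^{p-1}\partial_x\mathcal{Z}_{k+1}^A$, which involve $\mathcal{Z}_{k+1}^A$ itself; the paper closes this by first observing $\mathcal{Z}_{k+1}^A\in H^{\infty}$ via bootstrap (so its derivatives are bounded), whence the fast decay of $Q^{p-1}$ forces these terms into $\H$, and only then invokes lemma \ref{th:expdecayA}.
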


\begin{proof}
The proof is very similar to the one in \cite{duy}, and we write it there for reader's convenience. We prove this proposition by induction, and for brevity, we omit the superscript $A$.

Define $\Z_1 := A\Y_+$ and $\V_1 := e^{-e_0t}\Z_1$. Then by the explicit definition of $R$ in \eqref{eq:linear}, \[ \partial_t\V_1 +\L\V_1 -R(\V_1) = -R(\V_1) = -R(Ae^{-e_0t}\Y_+) = \sum_{j=2}^p e^{-je_0t} A^j \binom{p}{j} \partial_x [Q^{p-j}\Y_+^j]  \] which yields \eqref{eq:approxsol} for $k=1$, since $\Y_+,Q\in\H$ by corollary \ref{th:Ydecay} and claim \ref{th:solitons}.

Let $k\geq 1$ and assume that $\Z_1,\ldots,\Z_k$ are known with the corresponding $\V_k$ satisfying \eqref{eq:approxsol}. Now let $\U_{k+1} := g_{k+1,k}\in\H$, so that \[ \partial_t \V_k +\L\V_k = R(\V_k) +e^{-(k+1)e_0t}\U_{k+1} + \sum_{j=k+2}^{pk} e^{-je_0t} g_{j,k}, \] and define $\Z_{k+1} := -{(\L-(k+1)e_0)}^{-1}\U_{k+1}$. Note that $\Z_{k+1}$ is well defined since $(k+1)e_0$ is not in the spectrum of $\L$ by proposition \ref{th:spectrum}, and moreover $\Z_{k+1}\in\H$. Indeed, we have \[ \Z'''_{k+1} -\Z'_{k+1} -(k+1)e_0\Z_{k+1} = -\U_{k+1} -p(p-1)Q'Q^{p-2}\Z_{k+1} -pQ^{p-1}\Z'_{k+1} \in\H \] by exponential decay of $Q^{(j)}$ for all $j\geq 0$ and since $\Z_{k+1}^{(j)} \in H^{\infty}(\R)$ by a bootstrap argument. Hence $\Z_{k+1}\in\H$ by lemma \ref{th:expdecayA} applied with $\lambda=(k+1)e_0\geq e_0$.

Then we have \[ \partial_t\left( \V_k +e^{-(k+1)e_0t}\Z_{k+1} \right) +\L\left( \V_k+e^{-(k+1)e_0t}\Z_{k+1}\right) = R(\V_k) + \sum_{j=k+2}^{pk} e^{-je_0t} g_{j,k}. \] Denote $\V_{k+1} := \V_k +e^{-(k+1)e_0t}\Z_{k+1}$. Thus we have \[ \partial_t\V_{k+1} +\L\V_{k+1} -R(\V_{k+1}) = R(\V_k)-R(\V_{k+1}) + \sum_{j=k+2}^{pk} e^{-je_0t} g_{j,k}. \] We conclude the proof by evaluating \begin{align*} R(\V_k)-R(\V_{k+1}) &= R(\V_k)-R(\V_k+e^{-(k+1)e_0t}\Z_{k+1})\\ &= \partial_x \left[ \sum_{j=2}^p \binom{p}{j} Q^{p-j} \left( {(\V_k+e^{-(k+1)e_0t}\Z_{k+1})}^j -\V_k^j \right) \right] = \sum_{j=k+2}^{p(k+1)} e^{-je_0t}\wt{g}_{j,k}, \end{align*} which yields \eqref{eq:approxsol} for $k+1$, and thus completes the proof.
\end{proof}

\subsubsection{Construction of special solutions} \label{subsubsec:construction}

We now prove proposition \ref{th:ua}, following the same three steps as in \cite{duy}. The main difference comes from step 2, because of the derivative in the error term which forces us to use the sharp smoothing effect developed in \cite{kpv}. Let $A\in\R$ and $s\geq 1$ integer. Write \[U^A(t,x+t)=Q(x)+h^A(t,x).\] First, by a fixed point argument, we construct a solution $h^A\in C^0([t_k,+\infty),H^s)$ of \eqref{eq:linear} for $k$ and $t_k$ large and such that \begin{equation} \label{eq:hv} \forall T\geq t_k,\ \nhs{(h^A-\V_k)(T)}\leq e^{-(k+\frac{1}{2})e_0T}.\end{equation} Next, the same arguments like in \cite{duy} show that $h^A$ does not depend on $s$ and $k$. For brevity, we omit the superscript $A$.\bigskip

\emph{Step 1. Reduction to a fixed point problem.} If we set $\tilde{h}(t,x)=h(t,x-t)$, equation \eqref{eq:linear} can be written as \begin{equation} \label{eq:linearbis} \partial_t \tilde{h}+\partial_x^3 \tilde{h} =-S(\tilde{h}),\quad S(\tilde{h})=\drondx \left[ \sum_{k=1}^p \binom{p}{k} Q^{p-k}(x-t)\tilde{h}^k\right]. \end{equation} Moreover, we have by \eqref{eq:approxsol}, $\eps_k(t)=\partial_t\V_k+\partial_x^3\V_k-\partial_x\V_k +\partial_x \left[ \sum_{j=1}^p \binom{p}{j} Q^{p-j}\V_k^j \right]$. Now let $v(t,x)=(h-\V_k)(t,x-t)$ and subtract the previous equation from \eqref{eq:linearbis}, so that \[ \partial_t v+\partial_x^3 v = -S[v+\V_k(t,x-t)]+S[\V_k(t,x-t)]-\eps_k(t,x-t).\] For notation simplicity, we drop the space argument $(x-t)$ for the moment. The equation can then be written as \begin{equation} v(t)=\M(v)(t) := \int_t^{+\infty} W(t-t')\left[ S(\V_k(t')+v(t'))-S(\V_k(t'))+\eps_k(t')\right]\,dt'. \end{equation} Note that \eqref{eq:hv} is equivalent to $\nhs{v(T)}\leq e^{-(k+\frac{1}{2})e_0T}$ for $T\geq t_k$. In other words, defining \[ \left\{ \begin{aligned} N_1(v) &= \sup_{T\geq t_k} e^{(k+\frac{1}{2})e_0T} \nhs{v(T)},\\ N_2(v) &= \sum_{s'=0}^s \sup_{T\geq t_k} e^{(k+\frac{1}{2})e_0T} \nlcld{\partial^{s'}v},\\ \Lambda(v) &= \Lambda_{t_k,k,s}(v) = \max(N_1(v),N_2(v)), \end{aligned} \right. \] it is enough to show that $\M$ is a contraction on $B$ defined by \[ B=B(t_k,k,s)=\left\{ v\in C^0([t_k,+\infty),H^s)\ |\ \Lambda(v)\leq 1 \right\}. \]

\begin{rem} \label{th:lcld}
The choice of the two norms $N_1$ and $N_2$ is related to the fact that global well posedness of supercritical \eqref{eq:gKdV} with initial data small in $H^1$ can be proved with the two norms $\wt{N_1}(v)=\sup_{t\in\R} \nh{v(t)}$ and $\wt{N_2}(v) = {\|v\|}_{L_x^5L_t^{10}} + {\|\partial_x v\|}_{L_x^5L_t^{10}}$, following \cite{kpvcritical}. We could also have used other norms from \cite{kpv}.\smallskip
\end{rem}

\emph{Step 2. Contraction argument.} We show that $\M$ is a contraction on $B$ for $s\geq 1$ and $k,t_k$ sufficiently large. Throughout this proof, we denote by $C$ a constant depending only on $s$, and $C_k$ a constant depending on $s$ and $k$. To estimate $N_1(\M(v))$ and $N_2(\M(v))$, we have to explicit \begin{align*} S(\V_k+v)-S(\V_k) &= \drondx \left[ \sum_{i=1}^p \binom{p}{i} Q^{p-i}\left( \Puiss{\V_k+v}{i}-\V_k^i\right) \right]\\ &= \drondx\left( pQ^{p-1}v\right) +\drondx \left[ \sum_{i=2}^p \binom{p}{i} Q^{p-i}v\cdot\sum_{l=1}^i \binom{i}{l} \V_k^{i-l}v^{l-1} \right]\\ &= p\frac{\partial \mathbf{I}}{\partial x}+ \sum_{\abc} C_{\abc} \frac{\partial \mathbf{II}_{\abc}}{\partial x} \end{align*} where $\mathbf{I}=Q^{p-1}v$ and $\mathbf{II}_{\abc}=Q^{\alpha}\V_k^{\beta}v^{\gamma}$, with: $\gamma\geq 1$, $\beta+\gamma\geq 2$, $\alpha+\beta+\gamma=p\geq 6$. We can now write \begin{multline*} \partial^s \M(v) = p\int_t^{+\infty} W(t-t')\drondx\left[\partial^s(\mathbf{I})\right]\,dt' +\sum_{\abc} C_{\abc} \int_t^{+\infty} W(t-t')\drondx\left[ \partial^s (\mathbf{II}_{\abc}) \right]\,dt'\\ +\int_t^{+\infty} W(t-t')\partial^s \eps_k(t')\,dt'. \end{multline*} By \eqref{eq:kpv1} and \eqref{eq:kpv2}, we obtain \begin{multline} \label{eq:ptfixe} \max\left({\|\partial^s \M(v)(T)\|}_{L_x^2},\nlcld{\partial^s \M(v)}\right) \leq C\nluld{\partial^{s-1}\eps_k} + C\nluld{\partial^s(\mathbf{I})}\\ +\sum_{\abc} C_{\abc} \nluld{\partial^s(\mathbf{II}_{\abc})}. \end{multline} We treat the terms $\eps_k$, $\mathbf{I}$, $\mathbf{II}_{\abc}$ for $\alpha=p-2,\beta=\gamma=1$, and for $\alpha=\beta=0,\gamma=p$. All other terms can be treated similarly: for example, $\mathbf{II}_{0,p-1,1}$ can be treated like $\mathbf{II}_{p-2,1,1}$, etc.\bigskip

For $\mathbf{I}$, since $Q$ and his derivatives have the same decay, it is enough to estimate the term $\tilde{\mathbf{I}} = \nluld{Q^{p-1}\partial^s v} \leq C\nluld{e^{-|x-t|}\partial^s v}$: \begin{align*} \tilde{\mathbf{I}} &\leq C{\|e^{x-t}\partial^s v\|}_{L_{(-\infty,T]}^1L_{[T,+\infty)}^2} +C{\|e^{t-x}\partial^s v\|}_{L_{[T,+\infty)}^1L_{[T,x]}^2} +C{\|e^{x-t}\partial^s v\|}_{L_{[T,+\infty)}^1L_{[x,+\infty)}^2}\\ &\leq C\sqrt{\int_{-\infty}^T e^{2x}\,dx}\sqrt{\int_x\int_T^{+\infty} e^{-2t}\Carre{\partial^s v}\,dt\,dx} +C\sqrt{\int_T^{+\infty} e^{-2x}\,dx} \sqrt{\int_x\int_T^{+\infty} e^{2t}\Carre{\partial^s v}\,dt\,dx}\\ &\qquad +C\sqrt{\int_T^{+\infty} e^{-2x}\,dx} \sqrt{\int_T^{+\infty} \int_x^{+\infty} e^{4x-2t}\Carre{\partial^s v}\,dt\,dx} \end{align*} by the Cauchy-Schwarz inequality. Now, by Fubini's theorem, and since $4x-2t\leq 2t$ in the last integral, we get \begin{align*} \tilde{\mathbf{I}} &\leq Ce^T N_1(v)\sqrt{\int_T^{+\infty} e^{-(2k+1)e_0t-2t}\,dt} + 2Ce^{-T}N_1(v)\sqrt{\int_T^{+\infty} e^{-(2k+1)e_0t+2t}\,dt}\\ &\leq Ce^T N_1(v)\frac{e^{-(k+\frac{1}{2})e_0T-T}}{\sqrt{(2k+1)e_0+2}} +2Ce^{-T}N_1(v)\frac{e^{-(k+\frac{1}{2})e_0T +T}}{\sqrt{(2k+1)e_0-2}} \leq CN_1(v)\frac{1}{\sqrt k}e^{-(k+\frac{1}{2})e_0T}. \end{align*} Note that since $k$ will be chosen large at the end of the argument, we can suppose $(2k+1)e_0>2$.

For $\mathbf{II}_{p-2,1,1}$, we treat similarly the term $\wt{\mathbf{II}}_{p-2,1,1} = \nluld{Q^{p-2}\V_k\partial^s v}$ since $\V_k$ and his derivatives have the same decay. In fact, we have by Hölder inequality \[ \wt{\mathbf{II}}_{p-2,1,1} \leq C\nlcld{\partial^sv} {\|\V_k\|}_{L_x^{5/4}L_{[T,+\infty)}^{5/2}} \leq CN_2(v)e^{-(k+\frac{1}{2})e_0T} {\|\V_k\|}_{L_x^{5/4}L_{[T,+\infty)}^{5/2}}. \] By the definition of $\V_k$ in proposition \ref{th:approxsol}, we have by noting $e'_0 = \frac{5}{2}e_0$ and $\mu' = \frac{5}{2}\mu$, \begin{align*} {\|\V_k\|}_{L_x^{5/4}L_{[T,+\infty)}^{5/2}}^{5/4} &\leq C_k {\| e^{-e_0t} e^{-\mu|x-t|} \|}_{L_x^{5/4} L_{[T,+\infty)}^{5/2}}^{5/4}\\ &\leq C_k\int_{-\infty}^T \sqrt{ \int_T^{+\infty} e^{-e'_0t}e^{-\mu't}e^{\mu' x}\,dt}\,dx + C_k\int_T^{+\infty} \sqrt{\int_T^x e^{-e'_0t}e^{\mu't}e^{-\mu'x}\,dt}\,dx\\ &\qquad + C_k\int_T^{+\infty} \sqrt{\int_x^{+\infty} e^{-e'_0t}e^{-\mu't}e^{\mu'x}\,dt}\,dx\\ &\leq C_k e^{\frac{\mu'}{2}T} \sqrt{\int_T^{+\infty} e^{-(e'_0+\mu')t}\,dt} + C_k e^{-\frac{\mu'}{2}T} \sqrt{\int_T^{+\infty} e^{(\mu'-e'_0)t}\,dt}\\ &\qquad +C_k \int_T^{+\infty} e^{\frac{\mu'}{2}x} \sqrt{\int_x^{+\infty} e^{-(e'_0+\mu')t}\,dt}\,dx\\ &\leq 3C_k e^{-\frac{e'_0}{2}T}\quad \m{ since $\mu<e_0$ by definition of $\mu$.} \end{align*} We finally deduce that ${\|\V_k\|}_{L_x^{5/4}L_{[T,+\infty)}^{5/2}} \leq C_k e^{-e_0T}$ and so $\wt{\mathbf{II}}_{p-2,1,1} \leq C_k N_2(v) e^{-(k+\frac{3}{2})e_0T}$.

For $\mathbf{II}_{0,0,p} = v^p$, first remark that \[ \partial^s(v^p) = p\partial^{s-1}(\partial v\cdot v^{p-1}) = p\partial^sv\cdot v^{p-1} + p\sum_{k=0}^{s-2} \binom{s-1}{k} \partial^{k+1} v\cdot \partial^{s-1-k}(v^{p-1}) \] where each term of the sum is a product of $p$ terms like $\partial^{s_j}v$ with $s_j\leq s-1$. Since $H^1(\R) \hookrightarrow L^{\infty}(\R)$, we can estimate the first term thanks to Hölder's inequality: \begin{align*} \nluld{\partial^sv\cdot v^{p-1}} &\leq {\|v\|}_{L_{[T,+\infty)}^{\infty}L_x^{\infty}}^{p-5}\cdot \nlcld{\partial^s v} \cdot \nlcld{v}^4\\ &\leq Ce^{-p(k+\frac{1}{2})e_0T} {N_1(v)}^{p-5} {N_2(v)}^5. \end{align*} The other terms in the sum can be treated in the same way, and more simply since we can choose any $(p-5)$ terms to take out in $L_{[T,+\infty)}^{\infty} L_x^{\infty}$ norm, and any $5$ others left in $L_x^5L_{[T,+\infty)}^{10}$ norm.

For $\eps_k$, we deduce by a similar calculation like above and by the expression of $\eps_k$ in \eqref{eq:approxsol} that \[ \nluld{\partial^{s-1} \eps_k} \leq C_k \int_{\R} \sqrt{ \int_T^{+\infty} e^{-2(k+1)e_0t}e^{-2\mu|x-t|}\,dt}\,dx \leq C'_k e^{-(k+1)e_0T}. \]

Summarizing from \eqref{eq:ptfixe}, we have shown \begin{multline*} \max\left( e^{(k+\frac{1}{2})e_0T}\nhs{\M(v)(T)}, \sum_{s'=0}^s e^{(k+\frac{1}{2})e_0T} \nlcld{\partial^{s'} v} \right)\\ \leq C_ke^{-\frac{e_0}{2}T} + \frac{CN_1(v)}{\sqrt k} + C_kN_2(v)e^{-e_0T}+ Ce^{-(p-1)(k+\frac{1}{2})e_0T} \puiss{N_1(v)}{p-5} \puiss{N_2(v)}{5}. \end{multline*} Since $v\in B(t_k,k,s)$, \emph{i.e.} $\Lambda(v)\leq 1$, then we have \[ \Lambda(\M(v)) \leq C_ke^{-\frac{e_0}{2}t_k}+ \left( \frac{C}{\sqrt k}+C_ke^{-e_0t_k} \right) \Lambda(v) \leq \left( \frac{C}{\sqrt k}+C_ke^{-\frac{e_0}{2}t_k} \right). \] First, choose $k$ so that $\frac{C}{\sqrt k}\leq \frac{1}{2}$, then take $t_k$ such that $C_ke^{-\frac{e_0}{2}t_k}\leq \frac{1}{2}$. Then $\M$ maps $B=B(t_k,k,s)$ into itself.

It remains to show that $\M$ is a contraction on $B$. If we $v,w\in B$, we have \[ \M(v)-\M(w) = \int_t^{+\infty} W(t-t') \Big[ S(\V_k(t')+v(t'))-S(\V_k(t')+w(t')) \Big] dt' \] and \begin{align*} S(\V_k+v)-S(\V_k+w) &= \drondx \left[ \sum_{j=1}^p \binom{p}{j} Q^{p-j} \left[ \Puiss{\V_k+v}{j}-\Puiss{\V_k+w}{j} \right] \right]\\ &= \drondx \sum_{j=1}^p \binom{p}{j} Q^{p-j}(v-w)\sum_{i=1}^{j-1} \Puiss{\V_k+v}{i}\Puiss{\V_k+w}{j-i}\\ &= p\drondx \Big[ Q^{p-1}(v-w)\Big] + \drondx \Big[ (v-w)\cdot\!\! \sum_{\abcd} C_{\abcd} Q^{\alpha}\V_k^{\beta}v^{\gamma}w^{\delta} \Big]. \end{align*} Under this form, a similar calculation like above allows us to conclude: the first term is treated like $\mathbf{I}$, and each $Q^{\alpha}\V_k^{\beta}v^{\gamma}w^{\delta}$ can be treated like $\mathbf{II}_{\abc}$ if we systematically take out the term $\Lambda(v-w)$ by Hölder's inequality. Hence we get, since there is no term in $\eps_k$, \[ \Lambda(\M(v)-\M(w)) \leq \left( \frac{C}{\sqrt k}+C_ke^{-e_0t_k} \right) \Lambda(v-w). \] Choosing if necessary a larger $k$, then a larger $t_k$, we may assume that $\frac{C}{\sqrt k}<\frac{1}{2}$ and $C_ke^{-e_0t_k}\leq \frac{1}{2}$, showing that $\M$ is a contraction on $B$. Hence, step 2 is complete.\bigskip

\emph{Step 3. End of the proof.} By the preceding step with $s=1$, there exist $k_0$ and $t_0$ such that there exists a unique solution $U^A$ of \eqref{eq:gKdV} satisfying $U^A \in C^0([t_0,+\infty),H^1)$ and \begin{equation} \label{eq:uas2} \Lambda_{t_0,k_0,1}\Big(U^A(t,x)-Q(x-t)-\V_{k_0}^A(t,x-t)\Big) \leq 1. \end{equation} Note that the fixed point argument still holds taking a larger $t_0$, and so the uniqueness remains valid, for any $t'_0\geq t_0$, in the class of solutions of \eqref{eq:gKdV} in $C^0([t'_0,+\infty),H^1)$ satisfying \eqref{eq:uas2}.

Finally, we can show proposition \ref{th:ua}. Since $U^A$ is a solution of \eqref{eq:gKdV}, it is sufficient to show that $U^A\in C^0([t_0,+\infty),H^s)$ for any $s$; the smoothness in time will follow from the equation. Let $s\geq 1$: by step 2, if $k_s$ is large enough, there exist $t_s$ and $\wt{U}^A \in C^0([t_s,+\infty),H^s)$ such that \[ \Lambda_{t_s,k_s,s}\Big(\wt{U}^A(t,x)-Q(x-t)-\V_{k_s}^A(t,x-t)\Big) \leq 1. \] Of course, we may choose $k_s\geq k_0+1$. But by construction of $\V_k^A$ in proposition \ref{th:approxsol}, we have $\V_{k_s}^A(t,x-t) -\V_{k_0}^A(t,x-t) = \sum_{j=k_0+1}^{k_s} e^{-je_0t} \Z_j^A(x-t)$ where $\Z_j^A\in\H$, and so by similar calculation like in step 2, \[ \Lambda_{t_s,k_0,s}\Big(\V_{k_s}^A(t,x-t)-\V_{k_0}^A(t,x-t)\Big) \leq Ce^{-\frac{e_0}{2}t_s} \leq \frac{1}{2} \] for $t_s$ large enough. Moreover, we have by definition of $\Lambda$ (and since $k_0\leq k_s-1$) \[ \Lambda_{t_s,k_0,s}(u) \leq e^{-e_0t_s}\Lambda_{t_s,k_s,s}(u). \] Thus, if we choose $t_s$ large enough such that $e^{-e_0t_s}\leq\frac{1}{2}$, we get by triangle inequality \begin{multline*} \Lambda_{t_s,k_0,1} \Big( \wt{U}^A(t,x)-Q(x-t)-\V_{k_0}^A(t,x-t) \Big) \leq \Lambda_{t_s,k_0,s} \Big( \wt{U}^A(t,x)-Q(x-t)-\V_{k_0}^A(t,x-t) \Big)\\ \leq \Lambda_{t_s,k_0,s} \Big( \wt{U}^A(t,x)-Q(x-t)-\V_{k_s}^A(t,x-t) \Big) + \Lambda_{t_s,k_0,s}\Big(\V_{k_s}^A(t,x-t)-\V_{k_0}^A(t,x-t)\Big) \leq 1. \end{multline*} In particular, $\wt{U}^A$ satisfies \eqref{eq:uas2} for large $t_s$. By the uniqueness in the fixed point argument, we have $U^A=\wt{U}^A$, which shows that $U^A \in C^0([t_s,+\infty),H^s)$. By the persistence of regularity of \eqref{eq:gKdV} equation, $U^A\in C^0([t_0,+\infty),H^s)$, where $s\geq 1$. In particular, by compactness on $[t_0,t_s]$, there exists $C=C(s)$ such that \[ \forall t\geq t_0,\quad \nhs{U^A(t,x)-Q(x-t) -\V_{k_0}^A(t,x-t)} \leq Ce^{-(k_0+\frac{1}{2})e_0t} \] and so \eqref{eq:ua} follows, which achieves the proof of proposition \ref{th:ua}.

\subsection{Uniqueness}

Now, the special solution $U^A$ being constructed, we prove its uniqueness, in the sense of the following proposition, which implies the second part of theorem \ref{th:main}.

\begin{prop} \label{th:uniqueness}
Let $u$ be a solution of \eqref{eq:gKdV} such that \begin{equation} \label{eq:condition} \inf_{y\in\R} \nh{u(t)-Q(\cdot-y)}\xrightarrow[t\to+\infty]{}0. \end{equation} Then there exist $A\in\R$, $t_0\in\R$ and $x_0\in\R$ such that $u(t)=U^A(t,\cdot-x_0)$ for all $t\geq t_0$, where $U^A$ is the solution of \eqref{eq:gKdV} defined in proposition \ref{th:ua}.
\end{prop}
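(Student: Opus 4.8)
The plan is to adapt to \eqref{eq:gKdV} the fixed-point scheme of \cite{duymerle,duy}; the heart of the matter is a rigidity statement of the type ``convergence to $Q$ implies exponential convergence to $Q$'', after which one matches $u$ with one of the solutions built in Proposition \ref{th:ua}.

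\emph{Step 1: modulation and conservation laws.} By \eqref{eq:condition}, $u(t)\in U_{\eps_0}$ for $t$ large, so Lemma \ref{th:decomp} provides a $C^1$ parameter $\sigma(t)$ such that $\eps(t,\cdot):=u(t,\cdot+\sigma(t))-Q$ satisfies $(\eps(t),Q')=0$ and $d(t):=\nh{\eps(t)}\to0$. Arguing as in Proposition \ref{th:eqeps}, $\partial_t\eps+\L\eps=(\sigma'(t)-1)(Q+\eps)_x+R(\eps)$ with $|\sigma'(t)-1|\leq Cd(t)$ and $\nlu{R(\eps(t))}\leq Cd(t)^2$. Conservation of $F$ and of the mass, together with $u(t)\to Q$ up to translation, give $F(u)=F(Q)$ and $\nld{u_0}=\nld Q$; hence, by Lemma \ref{th:Flin} and the mass identity, $(L\eps(t),\eps(t))=O(d(t)^3)$ and $|(\eps(t),Q)|\leq Cd(t)^2$.

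\emph{Step 2: exponential decay (the main obstacle).} Using the spectral description of Proposition \ref{th:spectrum}, let $a_+(t)$ and $a_-(t)$ be the components of $\eps(t)$ along the eigenfunction $\Y_+$ (stable, eigenvalue $e_0$: $e^{-t\L}\Y_+$ decays) and $\Y_-$ (unstable, eigenvalue $-e_0$), obtained by pairing $\eps$ with the corresponding eigenfunctions of $\L^*$. From the equation for $\eps$ one gets $|a_+'+e_0a_+|\leq Cd^2$ and $|a_-'-e_0a_-|\leq Cd^2$. Coercivity of the linearized energy $(L\cdot,\cdot)$ modulo the modulation direction $Q'$ and the eigendirections $\Y_\pm$ — obtained from Claim \ref{th:L}, Lemma \ref{th:coercif} and the spectral structure, as in \cite{duy} — combined with $(L\eps,\eps)=O(d^3)$ yields $d(t)^2\leq C(a_+(t)^2+a_-(t)^2)+Cd(t)^3$. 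Since $\eps(t)\to0$, one first integrates the unstable ODE backward from $+\infty$ to bound $a_-(t)\leq C\sup_{s\geq t}d(s)^2$, and then runs a bootstrap with the stable ODE for $a_+$ to conclude $d(t)\leq Ce^{-e_0t}$. This combination of the sign-definite modal dynamics with the coercivity estimate is the delicate point, and I expect it to be the main difficulty. In the degenerate case $a_+\equiv0$ for $t$ large one gets $\eps\equiv0$, i.e.\ $u(t)=Q(\cdot-t-x_0)$ is the soliton ($A=0$).

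\emph{Step 3: identification of $A$ and matching with $U^A$.} Once $d(t)\leq Ce^{-e_0t}$, the function $e^{e_0t}a_+(t)$ has an integrable derivative, so $a_+(t)=Ae^{-e_0t}+O(e^{-2e_0t})$ for some $A\in\R$; likewise $a_-(t)=O(e^{-2e_0t})$, and from $|\sigma'(t)-1|\leq Cd(t)$, $\sigma(t)=t+x_\infty+O(e^{-e_0t})$ for some $x_\infty\in\R$. Set $\hat h(t,z)=u(t,z+t+x_\infty)-Q(z)$, which solves \eqref{eq:linear} with $\nh{\hat h(t)}\leq Ce^{-e_0t}$. One then checks that, for $k_0$ and $t_1$ large enough and for a suitable translation $x_0$ near $-x_\infty$ — the spatial translation being treated as an additional modulation parameter, as for the symmetry parameters in \cite{duy}, so that the $Q'$-contributions of $u$ and of $U^A$ cancel — the function $u(\cdot-x_0)$ satisfies condition \eqref{eq:uas2} on $[t_1,+\infty)$, i.e.\ $\hat h-\V_{k_0}^A$ lies in the ball $B(t_1,k_0,1)$ on which $\M$ is a contraction (Section \ref{subsubsec:construction}). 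This verification is carried out by estimating the Duhamel formula for $\hat h-\V_{k_0}^A$ term by term, exactly as in Step 2 of the proof of Proposition \ref{th:ua}, the loss of one derivative in $R$ being compensated by the smoothing estimates of Proposition \ref{th:kpv}. By the uniqueness in the fixed-point argument, $u(t,\cdot-x_0)=U^A(t)$ for all $t\geq t_1$; taking $t_0=\max(t_1,t_0(A))$ yields the claim.
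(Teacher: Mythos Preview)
Your Steps 1--2 are essentially the paper's Step 1 (Lemma \ref{th:improve}): modulation, the differential inequalities for the two spectral coefficients, coercivity of $L$ under the orthogonality conditions (Lemma \ref{th:Z}(iii)--(v)), and the conclusion $\nh{\eps(t)}\leq Ce^{-e_0t}$. That part is fine.

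The gap is in your Step 3. You propose to conclude by showing directly that $\hat h-\V_{k_0}^A$ lies in the ball $B(t_1,k_0,1)$ of Section \ref{subsubsec:construction}. But membership in that ball requires, via $N_1$, that $\nh{\hat h(T)-\V_{k_0}^A(T)}\leq e^{-(k_0+\frac12)e_0T}$ for all $T\geq t_1$, and $k_0$ has to be taken \emph{large} for $\M$ to contract. From your Steps 1--2 you only know $\nh{\hat h(t)}\leq Ce^{-e_0t}$; after matching the leading coefficient $A$ this gives at best $\nh{\hat h(t)-\V_{k_0}^A(t)}=O(e^{-2e_0t})$, which is far from $e^{-(k_0+\frac12)e_0t}$ for $k_0\geq 2$. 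The Duhamel estimates of Section \ref{subsubsec:construction} show that $\M$ maps $B$ into itself and contracts there; they do \emph{not} place an arbitrary solution in $B$ without an a priori bound on $\Lambda$ at the rate $(k_0+\tfrac12)e_0$. The phrase ``estimating the Duhamel formula term by term, exactly as in Step 2 of Proposition \ref{th:ua}'' therefore does not do what you need.

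The paper fills this gap with two further steps that you are missing. First (its Step 3), one sets $v(t,x)=u(t,x+t+x_0)-U^A(t,x+t)$ and proves by induction that $\nh{v(t)}\leq C_\gamma e^{-\gamma t}$ for \emph{every} $\gamma>0$: the key ingredients are the differential inequalities for $(v,Z_\pm)$, the coercivity $(Lv_\bot,v_\bot)\geq\sigma_1\nh{v_\bot}^2$, and a Weinstein-type functional $F(t)$ adapted to the equation for $v$ whose derivative is $O(e^{-(2\gamma+e_0)t})$. Second (its Step 4), instead of invoking the global-in-time fixed point of Section \ref{subsubsec:construction}, one runs a \emph{short-time} contraction on intervals $I=(t_1,t_1+\tau)$ with $\tau$ fixed independently of $t_1$, yielding $\nh{v(t_1)}\leq 3K\nh{v(t_1+\tau)}$; iterating this and combining with the arbitrary exponential decay gives $\nh{v(t)}\leq C_\gamma(3K)^n e^{-\gamma n\tau}e^{-\gamma t}\to 0$ for $\gamma$ large, hence $v\equiv 0$. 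This two-step mechanism (decay at any order, then short-time contraction) is precisely how \cite{duymerle,duy} proceed, and it is not bypassed by the single fixed point of Section \ref{subsubsec:construction}.
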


The proof of proposition \ref{th:uniqueness} proceeds in four steps: first we improve condition \eqref{eq:condition} into an exponential convergence and we control the translation parameter, then we improve the exponential convergence up to any order, and finally we adapt step 3 of \cite{duy} to \eqref{eq:gKdV} to conclude the proof. A crucial argument for the first and third steps is the coercivity of $(L\cdot,\cdot)$ under orthogonality to eigenfunctions of the adjoint of $\L$, proved in \cite{martel:Nsolitons}.

\subsubsection{Adjoint of $\L$}

We recall that $L$ is defined by $La = -\partial_x^2 a +a-pQ^{p-1}a$ and $\L$ by $\L = -\partial_x L$. In particular, the adjoint of $\L$ is $L\partial_x$. Moreover $\L$ has two eigenfunctions $\Y_{\pm}$, with $\L \Y_{\pm} = \pm e_0 \Y_{\pm}$ where $e_0>0$.

\begin{lem} \label{th:Z}
Let $Z_{\pm} = L\Y_{\pm}$. Then the following properties hold:
\begin{enumerate}[(i)]
\item $Z_{\pm}$ are two eigenfunctions of $L\partial_x$: $L(\partial_x Z_{\pm}) = \mp e_0 Z_{\pm}$.
\item $(\Y_+,Z_+)=(\Y_-,Z_-)=0$ and $(Z_+,Q')=(Z_-,Q')=0$.
\item There exists $\sigma_1>0$ such that, for all $v\in H^1$ such that $(v,Z_+)=(v,Z_-)=(v,Q')=0$, $(Lv,v)\geq \sigma_1\nh{v}^2$.
\item One has $(\Y_+,Z_-)\neq 0$ and $(Q',\Y'_+)\neq 0$. Hence one can normalize $\Y_{\pm}$ and $Z_{\pm}$ to have \[(\Y_+,Z_-)=(\Y_-,Z_+)=1,\quad (Q',\Y'_+)>0\quad \m{and still}\quad L\Y_{\pm}=Z_{\pm}. \]
\item There exist $\sigma_2>0$ and $C>0$ such that for all $v\in H^1$, \begin{equation} \label{eq:coerbis} (Lv,v)\geq \sigma_2 \nh{v}^2 -C\carre{(v,Z_+)} -C\carre{(v,Z_-)} -C\carre{(v,Q')}. \end{equation}
\end{enumerate}
\end{lem}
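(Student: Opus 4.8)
The first two items are one-line computations; (iv) comes from the spectral picture of \cite{pegoweinstein}; (iii) is the real content, proved in \cite{martel:Nsolitons}; and (v) is a soft consequence of (iii). For (i), the relation $\L\Y_\pm=\pm e_0\Y_\pm$ reads $-(L\Y_\pm)_x=\pm e_0\Y_\pm$, i.e. $\partial_x Z_\pm=\mp e_0\Y_\pm$, and applying the self-adjoint operator $L$ to this identity gives $L(\partial_x Z_\pm)=\mp e_0 L\Y_\pm=\mp e_0 Z_\pm$. For (ii), since $\Y_\pm,Q\in\H$ (corollary \ref{th:Ydecay}, claim \ref{th:solitons}) the function $Z_\pm=-\Y_\pm''+\Y_\pm-pQ^{p-1}\Y_\pm$ decays exponentially, so $(\Y_\pm,Z_\pm)=\mp e_0^{-1}(\partial_x Z_\pm,Z_\pm)=\mp\frac{1}{2e_0}\int\partial_x(Z_\pm^2)=0$, while $(Z_\pm,Q')=(L\Y_\pm,Q')=(\Y_\pm,LQ')=0$ because $LQ'=0$ (claim \ref{th:L}(ii)); the same self-adjointness yields the symmetry $(\Y_+,Z_-)=(L\Y_+,\Y_-)=(\Y_+,L\Y_-)=(\Y_-,Z_+)$ used below.

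\textbf{Part (iv).} By (i), $Z_+$ and $Z_-$ are eigenfunctions of the adjoint operator $L\partial_x$ for the eigenvalues $-e_0$ and $+e_0$, while $Q'$ spans $\ker\L$ and, since $\L S=-(LS)_x=Q'$ (claim \ref{th:L}(iii)), the pair $S,Q'$ spans the generalized kernel. I would invoke the spectral decomposition of $\L$ from \cite{pegoweinstein}: if $(\Y_+,Z_-)$ vanished, then together with $(\Y_+,Z_+)=0$ from (ii) and biorthogonality of the spectral subspaces of $\L$ and of $L\partial_x$, the eigenfunction $\Y_+$ would be orthogonal to every spectral subspace of $L\partial_x$, hence to all of $L^2$ --- absurd. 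So $a_0:=(\Y_+,Z_-)=(\Y_-,Z_+)\ne 0$. The non-vanishing $(Q',\Y_+')\ne 0$, which expresses non-degeneracy of the translation mode against $\Y_+$, is of the same nature and is recorded in \cite{pegoweinstein,martel:Nsolitons}. Granting these, one fixes the normalization by rescaling $\Y_\pm$ (and accordingly $Z_\pm=L\Y_\pm$) and adjusting a sign, transporting the choice from the $+$ to the $-$ eigenfunction via $\Y_-=\check\Y_+$ (equivalently $Z_-=\check Z_+$).

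\textbf{Part (iii), the main obstacle.} This coercivity on $\{Z_+,Z_-,Q'\}^\bot$ is the heart of the lemma (proved in \cite{martel:Nsolitons}), and I would present the mechanism as follows. By claim \ref{th:L}, on $\{Q'\}^\bot$ the form $(L\cdot,\cdot)$ has exactly one negative direction --- spanned by $\Qp$, which lies in $\{Q'\}^\bot$ by parity --- and is $\geq\sigma_0\nld{\cdot}^2$ on $\{Q',\Qp\}^\bot$. On $\mathrm{span}(\Y_+,\Y_-)$ the Gram matrix of $(L\cdot,\cdot)$ is antidiagonal with off-diagonal entry $a_0\ne 0$ (by (ii),(iv)), hence carries a negative line, e.g. $\mathrm{span}(\Y_+-\Y_-)$. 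If $(Lv_0,v_0)<0$ for some $v_0\bot Z_+,Z_-,Q'$, pick $c$ so that $w:=\Y_+-\Y_--cQ'\bot Q'$; then on $\mathrm{span}(v_0,w)\subset\{Q'\}^\bot$ the form is diagonal, since $(Lv_0,w)=(v_0,Z_+)-(v_0,Z_-)-c(v_0,LQ')=0$ and $(Lv_0,Q')=(v_0,LQ')=0$, and it is negative definite of dimension two --- contradicting Morse index one. Hence $(Lv,v)\geq 0$ on $\{Z_+,Z_-,Q'\}^\bot$, and a standard compactness argument upgrades this: for $v_n\bot Z_+,Z_-,Q'$ with $\nld{v_n}=1$ and $(Lv_n,v_n)\to 0$, the exponential decay of $Q^{p-1}$ makes $\int Q^{p-1}v_n^2$ converge along the weak limit $v_\infty$; if $v_\infty=0$ then $(Lv_n,v_n)\to 1$, a contradiction, and if $v_\infty\ne 0$ it is a minimizer, whose Euler--Lagrange identity $Lv_\infty=aZ_++bZ_-+cQ'$, tested against $Q'$, $\Y_+$, $\Y_-$, forces $a=b=c=0$ by (i),(ii),(iv), so $v_\infty\in\ker L=\mathrm{span}(Q')$, again absurd. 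Thus $(Lv,v)\geq\sigma_1\nld{v}^2$, from which the $H^1$ version follows as in lemma \ref{th:coercif} (write $\nh v^2=(Lv,v)+p\int Q^{p-1}v^2\leq(1+C)(Lv,v)$).

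\textbf{Part (v), and expected difficulty.} For an arbitrary $v\in H^1$, (v) follows from (iii) by projecting out the three bad directions: choose scalars $b_1,b_2,b_3$ so that $\tilde v:=v-b_1\Y_+-b_2\Y_--b_3Q'$ is orthogonal to $Z_+,Z_-,Q'$; this linear system is solvable because its matrix, pairing $\Y_+,\Y_-,Q'$ against $Z_+,Z_-,Q'$, has determinant $-a_0^2\nld{Q'}^2\ne 0$, with $|b_i|\leq C(|(v,Z_+)|+|(v,Z_-)|+|(v,Q')|)$. Self-adjointness of $L$ together with $L\Y_\pm=Z_\pm$, $LQ'=0$ gives $(L\tilde v,\Y_\pm)=(\tilde v,Z_\pm)=0$ and $(L\tilde v,Q')=0$, so in expanding $(Lv,v)$ all cross terms between $\tilde v$ and the $b_i$-part drop out, leaving $(Lv,v)=(L\tilde v,\tilde v)+2a_0b_1b_2\geq\sigma_1\nh{\tilde v}^2-|a_0|(b_1^2+b_2^2)$; combining with $\nh{\tilde v}^2\geq\frac12\nh v^2-C(b_1^2+b_2^2+b_3^2)$ and the bound on the $b_i$ yields \eqref{eq:coerbis}. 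The whole difficulty is concentrated in (iii) --- equivalently in the non-vanishing $a_0\ne 0$ of (iv) that powers the argument; (i), (ii) and (v) are bookkeeping around claim \ref{th:L} and proposition \ref{th:spectrum}.
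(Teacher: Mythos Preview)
Your treatment of (i), (ii) and (v) is essentially the paper's, and your sketch of (iii) goes beyond the paper (which simply cites \cite{martel:Nsolitons}); the Morse-index-plus-compactness outline you give is a reasonable rendition of that argument.

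The real divergence is in (iv), and here your argument has a gap. You claim that if $(\Y_+,Z_-)=0$ then, by ``biorthogonality of the spectral subspaces of $\L$ and of $L\partial_x$'', $\Y_+$ would be orthogonal to all of $L^2$. But $\L$ and $L\partial_x$ are not self-adjoint, their essential spectrum is $i\R$, and no completeness of (generalized) eigenfunctions is established in \cite{pegoweinstein}; the step ``hence to all of $L^2$'' is unjustified. The same applies to $(Q',\Y'_+)\neq 0$, which you simply cite. The paper instead \emph{derives (iv) from (iii)}, by two short test-function arguments: if $(\Y_+,Z_-)=0$ then $\Y_++\Y_-$ (even, so $\perp Q'$) lies in $\{Z_+,Z_-,Q'\}^\perp$, yet $(L(\Y_++\Y_-),\Y_++\Y_-)=2(Z_+,\Y_-)=0$, contradicting (iii); and if $(Q',\Y'_+)=0$ one checks $(Q,\Y_+)=0$ (via $(Q,\Y_+)=\frac{1}{e_0}(LQ',\Y_+)=0$), whence $(Q,Z_\pm)=0$, so $Q\in\{Z_+,Z_-,Q'\}^\perp$, yet $(LQ,Q)=(1-p)\int Q^{p+1}<0$, again contradicting (iii).

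This also reverses your logical order: your sketch of (iii) uses $a_0\neq 0$ from (iv) to locate a second negative direction. In the paper's route, (iii) is the black box from \cite{martel:Nsolitons} and (iv) is a corollary of it; your route makes (iii) rest on a version of (iv) that you have not actually proved. Either invoke (iii) as given and reproduce the paper's two contradictions, or supply a genuine spectral-completeness statement for $L\partial_x$ --- the former is much shorter.
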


\begin{proof}
\begin{enumerate}[(i)]
\item It suffices to apply $L$ to the equality $-\partial_x(L\Y_{\pm})=\pm e_0\Y_{\pm}$.
\item We have $(\Y_{\pm},Z_{\pm}) = \mp \frac{1}{e_0}(\partial_x(L\Y_{\pm}),L\Y_{\pm})=0$ and $(Z_{\pm},Q') = (L\Y_{\pm},Q') = (\Y_{\pm},LQ') = 0$ since $LQ'=0$ and $L$ is self-adjoint.
\item This fact is assertion (7) proved in \cite{martel:Nsolitons}.
\item If we had $(\Y_+,Z_-)=(Z_+,\Y_-)=0$, then by (ii) we would have in fact $(\Y_++\Y_-) \bot Z_+,Z_-,Q'$ since $Q'$ is odd and $\Y_++\Y_-$ is even, and so by (iii): $(L(\Y_++\Y_-),\Y_++\Y_-)\geq \sigma_1 \nh{\Y_++\Y_-}^2$.  But $(L(\Y_++\Y_-),\Y_++\Y_-) = (L\Y_+,\Y_+)+(L\Y_-,\Y_-)+2(L\Y_+,\Y_-) = (Z_+,\Y_+)+(Z_-,\Y_-)+2(Z_+,\Y_-)=0$, and so we would get $\nh{\Y_++\Y_-}=0$, \emph{i.e.} $\Y_+ = -\Y_-$, which is a contradiction with the independence of the family $(\Y_+,\Y_-)$.

    Similarly, if we had $(Q',\Y'_+)= 0$, we would have $(Q'',\Y_+)=0$. Moreover we have $(Q,\Y_+)=-\frac{1}{e_0}(Q,(L\Y_+)') = \frac{1}{e_0}(LQ',\Y_+)=0$, and so we would have \[ (Q,Z_+) = (Q,L\Y_+) = (LQ,\Y_+) = (-Q''+Q-pQ^p,\Y_+) = -p(Q-Q'',\Y_+)=0. \] But we would also have $(Q,Z_-)=0$ since $Q$ is even and $Z_- = \check{Z}_+$. Since $(Q,Q')=0$, we would finally have $(LQ,Q)\geq \sigma_1 \nh{Q}^2$ by (iii). But a straightforward calculation gives $(LQ,Q)=(1-p)\int Q^{p+1} <0$, and so a contradiction.

    Finally, if we note $\eta=(\Y_+,Z_-)\neq 0$, then the normalization $\wt{\Y_-} = \frac{1}{\eta}\Y_-$, $\wt{Z_-} = \frac{1}{\eta}Z_- = L\wt{\Y_-}$ satisfies the required properties if $(Q',\Y'_+)>0$. Otherwise, it suffices to change $\Y_{\pm}$ and $Z_{\pm}$ in $-\Y_{\pm}$ and $-Z_{\pm}$ respectively.
\item Let $v\in H^1$, and decompose it as \[ v = \alpha\Y_+ +\beta \Y_- +\gamma Q' +v_{\bot} \] with $\alpha=(v,Z_-)$, $\beta=(v,Z_+)$, $\gamma = \nld{Q'}^{-2} [(v,Q')-\alpha(\Y_+,Q')-\beta(\Y_-,Q')]$ and $v_{\bot}$ orthogonal to $Z_+,Z_-,Q'$ by the previous normalization. We have by straightforward calculation $(Lv,v) = (Lv_{\bot},v_{\bot})+2\alpha\beta$, and $(Lv_{\bot},v_{\bot}) \geq \sigma_1\nh{v_{\bot}}^2$ by (iii), so we have $(Lv,v)\geq \sigma_1\nh{v_{\bot}}^2 -\alpha^2-\beta^2$. Finally, we have by the previous decomposition of $v$ that \[ \nh{v}^2 \leq C(\alpha^2+\beta^2 +\gamma^2 +\nh{v_{\bot}}^2) \leq C'(\alpha^2+\beta^2+ \carre{(v,Q')}+\nh{v_{\bot}}^2) \] and so $(Lv,v) \geq \sigma_1\left[ \frac{\nh{v}^2}{C'}-\alpha^2-\beta^2-\carre{(v,Q')} \right] -\alpha^2-\beta^2$, as desired. \qedhere
\end{enumerate}
\end{proof}

\subsubsection{Step 1: Improvement of the decay at infinity}

We begin the proof of proposition \ref{th:uniqueness} here: let $u$ be a solution of \eqref{eq:gKdV} verifying \eqref{eq:condition}.

\begin{itemize}
\item By lemma \ref{th:decomp}, we can write $\eps(t,x)=u(t,x+x(t))-Q(x)$ for $t\geq t_0$ with $t_0$ large enough, where $\eps$ verifies $\nh{\eps(t)}\vers 0$ and $\eps(t)\bot Q'$ for all $t\geq t_0$. We recall that we have by proposition \ref{th:eqeps}: \begin{equation} \label{eq:eqeps} \eps_t -{(L\eps)}_x = (x'-1){(Q+\eps)}_x+R(\eps) \end{equation} where $\nlu{R(\eps)}\leq C\nh{\eps}^2$ and $|x'-1|\leq C\nh{\eps}$.
\item Now consider \[ \alpha_+(t) = \int Z_+ \eps(t),\qquad \alpha_-(t) = \int Z_-\eps(t) \] where $Z_{\pm}$ are defined in lemma \ref{th:Z}. Since $\nh{\eps(t)}\vers 0$, we have of course $\alpha_{\pm}(t)\vers 0$. The two remaining points will be to show that $\alpha_{\pm}(t)$ control $\nh{\eps(t)}$, and have exponential decay at infinity.
\item First, we recall the linearization of Weinstein's functional (lemma \ref{th:Flin}): \[ F(Q+\eps) = F(Q) +\frac{1}{2}(L\eps,\eps)+K(\eps) \] where $|K(\eps)|\leq C\nh{\eps}^3$. But $F(Q+\eps)-F(Q)$ is a constant which tends to $0$ at infinity in time, and so is null, hence we get $|(L\eps,\eps)|\leq C\nh{\eps}^3$. We now use \eqref{eq:coerbis}, which gives since $(\eps,Q')=0$: \[ (L\eps,\eps) \geq \sigma_2 \nh{\eps(t)}^2 -C\alpha_+^2(t) -C\alpha_-^2(t) \] and so $\sigma_2\nh{\eps(t)}^2 -C\alpha_+^2(t)-C\alpha_-^2(t)-C'\nh{\eps(t)}^3\leq 0$. For $t_0$ chosen possibly larger, we conclude that \[ \nh{\eps(t)}^2 \leq C(\alpha_+^2(t)+\alpha_-^2(t)). \]
\item We have now to obtain exponential decay of $\alpha_{\pm}$ to conclude the first step. If we multiply \eqref{eq:eqeps} by $Z_+$ and integrate, we obtain \[ \alpha'_+(t)-e_0\alpha_+(t) = (x'-1)\int {(Q+\eps)}_x Z_+ +\int R(\eps)Z_+ = (x'-1)\int \eps_x Z_+ +\int R(\eps)Z_+ \] by integrating by parts and using (i) and (ii) of lemma \ref{th:Z}. By the controls of $|x'-1|$ and $R(\eps)$, we get $|\alpha'_+-e_0\alpha_+|\leq C\nh{\eps}^2 \leq C(\alpha_+^2+\alpha_-^2)$. Doing similarly with $Z_-$, we have finally the differential system \begin{empheq}[left=\empheqlbrace\,]{align} |\alpha'_+-e_0\alpha_+| &\leq C(\alpha_+^2+\alpha_-^2),\\ |\alpha'_-+e_0\alpha_-| &\leq C(\alpha_+^2+\alpha_-^2). \label{eq:alpham} \end{empheq}
\item Now define $h(t)=\alpha_+(t)-M\alpha_-^2(t)$ where $M$ is a large constant to define later. Multiplying \eqref{eq:alpham} by $|\alpha_-|$ (which can of course be taken less than $1$), we get \begin{align*} h'(t) &= \alpha'_+(t)-2M\alpha_-(t)\alpha'_-(t) \geq e_0\alpha_+ -C(\alpha_+^2+\alpha_-^2)+2Me_0\alpha_-^2 -2CM|\alpha_-|(\alpha_+^2+\alpha_-^2)\\ &\geq e_0h+3Me_0\alpha_-^2 -2Ch^2-2CM^2\alpha_-^4 -C^*\alpha_-^2 -4CMh^2-4CM^3{|\alpha_-|}^5-2CM{|\alpha_-|}^3 \end{align*} since $\alpha_+^2 = \Carre{h+M\alpha_-^2} \leq 2(h^2+M^2\alpha_-^4)$. We now fix $M= \frac{C^*}{e_0}$, so that \[ h' \geq e_0h -2Ch^2-4CMh^2 +\alpha_-^2\left( 2Me_0-2CM^2\alpha_-^2-4CM^3{|\alpha_-|}^3-2CM|\alpha_-|\right). \] Then for $t$ large enough, the expression in parenthesis is positive, and so \[ h'\geq e_0h -c_Mh^2. \] Now take $t_0$ large enough such that for $t\geq t_0$, we have $c_Mh^2\leq \frac{e_0}{2}|h|$, and suppose for the sake of contradiction that there exists $t_1\geq t_0$ such that $h(t_1)>0$. Define $T=\sup\{ t\geq t_1\ |\ h(t)>0\}$ and suppose that $T<+\infty$: since $h'(t)\geq e_0\left( h(t)-\frac{|h(t)|}{2} \right)$ for all $t\geq t_0$ and of course $h(T)=0$, we would have in particular $h'(T)\geq 0$, so $h$ increasing near $T$, and so $h(t)\leq 0$ for $t\in [T-\eps,T]$, which would be in contradiction with the definition of $T$. Hence we have $T=+\infty$, and so $h(t)>0$ for all $t\geq t_1$. Consequently, we would have $h'(t)\geq \frac{e_0}{2}h(t)$ for all $t\geq t_1$, and so $h(t)\geq Ce^{\frac{e_0}{2}t}$, which would be a contradiction with $\lim_{t\to +\infty} h(t) = 0$. Therefore we have $h(t)\leq 0$ for all $t\geq t_0$. Since $-\alpha_+$ satisfies the same differential system, we obtain by the same technique: $\forall t\geq t_0, |\alpha_+(t)|\leq M\alpha_-^2(t)$.
\item Reporting this estimate in \eqref{eq:alpham}, we obtain \[ |\alpha'_-(t)+e_0\alpha_-(t)|\leq C\alpha_-^2(t) \leq \frac{e_0}{10}|\alpha_-(t)| \] for $t$ large enough. In other words, we have $|(e^{e_0t}\alpha_-(t))'|\leq \frac{e_0}{10} |e^{e_0t}\alpha_-(t)|$, and so by integration: $|\alpha_-(t)|\leq Ce^{-\frac{9}{10}e_0t}$. By a bootstrap argument we get $|\alpha'_-(t)+e_0\alpha_-(t)|\leq Ce^{-\frac{9}{10}e_0t}|\alpha_-(t)|$, and so still by integration, we get $|e^{e_0t}\alpha_-(t)|\leq C$ for all $t\geq t_0$, \emph{i.e.} $|\alpha_-(t)|\leq Ce^{-e_0t}$. By the previous point, we also obtain \begin{equation} \label{eq:alphap} |\alpha_+(t)|\leq Ce^{-2e_0t} \end{equation} and finally $\nh{\eps(t)}^2 \leq C(\alpha_+^2(t)+\alpha_-^2(t)) \leq Ce^{-2e_0t}$.
\end{itemize}

For clarity, we summarize the results obtained so far.

\begin{lem} \label{th:improve}
If $u$ is a solution of \eqref{eq:gKdV} which verifies $\inf_{y\in\R} \nh{u(t)-Q(\cdot-y)}\xrightarrow[t\to+\infty]{} 0$, then there exist a $C^1$ map $x~: t\in\R\mapsto x(t)\in\R$, $t_0\in\R$ and $C>0$ such that \[ \forall t\geq t_0, \quad \nh{u(t,\cdot+x(t))-Q}\leq Ce^{-e_0t}. \]
\end{lem}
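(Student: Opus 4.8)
The plan is to run a modulation argument around $Q$ and then track the two spectral coordinates of the error along the unstable and stable directions of $\L$. First I would invoke Lemma \ref{th:decomp}: since $\inf_y \nh{u(t)-Q(\cdot-y)}\to 0$, for $t$ large the solution lies in $U_{\eps_0}$, so there is a $C^1$ map $x(t)$ with $\eps(t,\cdot):=u(t,\cdot+x(t))-Q \bot Q'$ and $\nh{\eps(t)}\to 0$. Proposition \ref{th:eqeps} then gives the equation $\eps_t-(L\eps)_x=(x'-1)(Q+\eps)_x+R(\eps)$ together with $\nlu{R(\eps)}\leq C\nh{\eps}^2$ and $|x'-1|\leq C\nh{\eps}$; the latter bound is all we need about the translation parameter at this stage, since it keeps $x'$ bounded and hence $x(t)$ well behaved.

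Next I would introduce the adjoint eigenfunctions $Z_\pm=L\Y_\pm$ from Lemma \ref{th:Z} and set $\alpha_\pm(t)=(\eps(t),Z_\pm)$, which tend to $0$. Two ingredients pin down $\nh{\eps}$ in terms of $\alpha_\pm$. Conservation of Weinstein's functional together with its linearization (Lemma \ref{th:Flin}) forces $F(Q+\eps(t))=F(Q)$ for all large $t$, because the constant $F(u_0)-F(Q)$ must equal its $t\to+\infty$ limit, which is $0$; hence $|(L\eps,\eps)|\leq C\nh{\eps}^3$. And the coercivity estimate \eqref{eq:coerbis}, using $\eps\bot Q'$, gives $(L\eps,\eps)\geq\sigma_2\nh{\eps}^2-C\alpha_+^2-C\alpha_-^2$. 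Combining these and absorbing the cubic term for $t$ large yields $\nh{\eps(t)}^2\leq C(\alpha_+^2(t)+\alpha_-^2(t))$. Pairing the $\eps$-equation with $Z_\pm$, integrating by parts and using (i)--(ii) of Lemma \ref{th:Z}, one obtains the differential system $|\alpha_+'-e_0\alpha_+|\leq C(\alpha_+^2+\alpha_-^2)$ and $|\alpha_-'+e_0\alpha_-|\leq C(\alpha_+^2+\alpha_-^2)$.

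The heart of the argument is to show that the \emph{unstable} coordinate $\alpha_+$ is slaved to $\alpha_-^2$. Here I would use a barrier/ODE-trap argument: set $h(t)=\alpha_+(t)-M\alpha_-^2(t)$ for a large constant $M$, and check from the system that $h'\geq e_0 h-c_M h^2$ once $t$ is large (choosing $M$ so that the quadratic $\alpha_-^2$ terms carry a favorable sign). If $h(t_1)>0$ at some large $t_1$, then $h'(t)\geq \frac{e_0}{2}h(t)$ while $h>0$, and an elementary continuity/sign argument shows $h$ cannot return to $0$, so $h$ grows exponentially — contradicting $h(t)\to 0$. Hence $h\leq 0$; applying the same reasoning to $-\alpha_+$ gives $|\alpha_+(t)|\leq M\alpha_-^2(t)$. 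Substituting back, $|\alpha_-'+e_0\alpha_-|\leq C\alpha_-^2\leq\frac{e_0}{10}|\alpha_-|$ for $t$ large, so $e^{e_0t}\alpha_-(t)$ grows at most slowly; a bootstrap then upgrades this to $|\alpha_-(t)|\leq Ce^{-e_0t}$, whence $|\alpha_+(t)|\leq Ce^{-2e_0t}$ and $\nh{\eps(t)}^2\leq C(\alpha_+^2+\alpha_-^2)\leq Ce^{-2e_0t}$, which is the claimed bound after relabeling constants.

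The main obstacle is precisely the trapping step for $\alpha_+$: the linearized flow is expanding in that direction, so the estimate $|\alpha_+|\lesssim\alpha_-^2$ cannot come from a direct Gronwall argument and must be extracted from the mere convergence hypothesis by contradiction. Making this work requires choosing $M$ and the threshold time carefully so that the quadratic remainder terms in $h'$ do not destroy the sign, and then arguing cleanly that a positive $h$ would persist and blow up.
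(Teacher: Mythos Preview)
Your proposal is correct and follows essentially the same approach as the paper: modulation via Lemma \ref{th:decomp}, control of $\nh{\eps}$ by $\alpha_\pm$ through Weinstein's functional and the coercivity \eqref{eq:coerbis}, the differential system for $\alpha_\pm$, the barrier $h=\alpha_+-M\alpha_-^2$ to trap the unstable mode, and the bootstrap to the sharp rate $e^{-e_0t}$. Even your identification of the main obstacle matches the paper's emphasis.
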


\subsubsection{Step 2: Removing modulation}

\begin{itemize}
\item From the previous point, we have in fact $|(e^{e_0t}\alpha_-(t))'| \leq Ce^{-e_0t} \in L^1([t_0,+\infty))$, and so there exists \[ \lim_{t\to+\infty} e^{e_0t}\alpha_-(t) =: A\in\R \] with $|e^{e_0t}\alpha_-(t)-A|\leq Ce^{-e_0t}$ for $t\geq t_0$ by integration. Similarly, since $|x'(t)-1|\leq C\nh{\eps(t)} \leq Ce^{-e_0t}$, then $\exists\lim_{t\to+\infty} x(t)-t =: x_0\in\R$ with $|x(t)-t-x_0|\leq Ce^{-e_0t}$.
\item Now consider the special solution $U^A$ constructed in proposition \ref{th:ua}, defined for a $t_0$ chosen possibly larger, and still write $U^A(t,x+t)=Q(x)+h^A(t,x)$. Let \[ v(t,x)=u(t,x+t+x_0)-Q(x)-h^A(t,x)=u(t,x+t+x_0)-U^A(t,x+t). \] So we want to prove $v=0$ to complete the proof of proposition \ref{th:uniqueness}. We first give estimates on $v$ using the previous estimates on $\eps$.
\item Since $v(t,x)=\eps(t,x-(x(t)-t-x_0))-h^A(t,x)+Q(x-(x(t)-t-x_0))-Q(x)$, then we simply obtain exponential decay for $v$ for $t_0$ large enough, by lemma \ref{th:Qmass} and exponential decay of $h^A$: \begin{align*} \nh{v(t)} &\leq \nh{\eps(t)} + \nh{h^A(t)} +\nh{Q-Q(\cdot-(x(t)-t-x_0))}\\ &\leq Ce^{-e_0t}+C|x(t)-t-x_0| \leq Ce^{-e_0t}. \end{align*}
\item Moreover, we can write \[ u(t,x) = Q(x-x(t))+\eps(t,x-x(t)) = Q(x-t-x_0)+h^A(t,x-t-x_0)+v(t,x-t-x_0). \] If we denote $\omega(t,x)=Q(x-(x(t)-t-x_0))-Q(x)-(x(t)-t-x_0) Q'(x)$, we have $\nli{\omega(t)}\leq C\Carre{x(t)-t-x_0} \leq Ce^{-2e_0t}$ by Taylor-Lagrange inequality, and \[ v(t,x)= (x(t)-t-x_0)Q'(x)-h^A(t,x)+\eps(t,x-(x(t)-t-x_0)) +\omega(t,x). \] Moreover, we have for all $x\in\R$ and $t\geq t_0$: \begin{align*} |\eps(t,x-(x(t)-t-x_0))-\eps(t,x)| &= \left| \int_x^{x-(x(t)-t-x_0)} \partial_x\eps(t,s)\,ds \right|\\ &\leq \sqrt{|x(t)-t-x_0|}\cdot\nh{\eps(t)}\leq Ce^{-\frac{3}{2}e_0t} \end{align*} by the Cauchy-Schwarz inequality. We have finally \begin{equation} \label{eq:transmission} v(t,x)= (x(t)-t-x_0)Q'(x)-h^A(t,x)+\eps(t,x)+\omega(t,x) \end{equation} where $\omega$ verifies $\nli{\omega(t)}\leq Ce^{-\frac{3}{2}e_0t}$.
\item Following the proof (v) in lemma \ref{th:Z}, we now decompose \begin{equation} \label{eq:decomposition} v(t,x) = \alphap(t)\Y_-(x) + \alpham(t)\Y_+(x)+\beta(t)Q'(x)+v_{\bot}(t,x) \end{equation} with \[ \alphap(t) = \int Z_+v(t),\ \alpham(t)=\int Z_-v(t),\ \beta(t) = \nld{Q'}^{-2} \int \Big(v(t)-\alphap(t)\Y_- -\alpham(t)\Y_+\Big)Q'. \] Hence we have $(v_{\bot},Q')=(v_{\bot},Z_+) =(v_{\bot},Z_-)=0$, and so by (iii) of lemma \ref{th:Z}: \begin{equation} \label{eq:vortho} (Lv_{\bot},v_{\bot}) \geq \sigma_1 \nh{v_{\bot}}^2. \end{equation}
\item Multiplying \eqref{eq:transmission} by $Z_{\pm}$, we obtain information on $\alpha_{\pm}^A$. Indeed, since $(Z_{\pm},Q')=0$, then we have \[ \alpha_{\pm}^A = -(h^A,Z_{\pm})+\alpha_{\pm} +(\omega,Z_{\pm}). \] But $|(h^A,Z_+)|\leq Ce^{-2e_0t}$ since $(\Y_+,Z_+)=0$, and $|\alpha_+|\leq Ce^{-2e_0t}$ by \eqref{eq:alphap}, hence $|\alphap|\leq Ce^{-\frac{3}{2}e_0t}$. Similarly, $(\Y_+,Z_-)=1$ implies that $|(h^A,Z_-)-Ae^{-e_0t}|\leq Ce^{-2e_0t}$, and since $|\alpha_--Ae^{-e_0t}|\leq Ce^{-2e_0t}$, we also get $|\alpham|\leq Ce^{-\frac{3}{2}e_0t}$. To sum up this step, we have \eqref{eq:decomposition} with the following estimates for $t\geq t_0$: \begin{equation} \label{eq:alphapm} |\alphap(t)|\leq Ce^{-\frac{3}{2}e_0t},\quad |\alpham(t)|\leq Ce^{-\frac{3}{2}e_0t},\quad \nh{v(t)}\leq Ce^{-e_0t}. \end{equation}
\end{itemize}

In \eqref{eq:alphapm}, it is essential to have obtained estimates better than $Ce^{-e_0t}$ for $\alpha_{\pm}^A$ (see next step).

\subsubsection{Step 3: Exponential decay at any order}

\begin{itemize}
\item We want to prove in this section that $v$ decays exponentially at any order to $0$. In other words, we prove: \begin{equation} \label{eq:toutordre} \forall\gamma>0, \exists C_{\gamma}>0, \forall t\geq t_0,\quad \nh{v(t)}\leq C_{\gamma} e^{-\gamma t}. \end{equation} It has been proved for $\gamma=e_0$, so that it is enough to prove it by induction on $\gamma\geq e_0$: suppose that $\nh{v(t)}\leq Ce^{-\gamma t}$ and let us prove that it implies $\nh{v(t)}\leq C'e^{-\left( \gamma+\frac{1}{2}e_0\right)t}$.
\item Since $u$ and $U^A$ are solutions of \eqref{eq:gKdV}, $v$ verifies the following equation: \begin{equation} \label{eq:v} \partial_t v-\partial_x v+\partial_x^3 v+\partial_x\left[ \Puiss{Q+h^A+v}{p}-\Puiss{Q+h^A}{p} \right]=0. \end{equation} But \begin{align*} \Puiss{Q+h^A+v}{p}-\Puiss{Q+h^A}{p} &= p\Puiss{Q+h^A}{p-1}v +\sum_{k=2}^p \binom{p}{k} \Puiss{Q+h^A}{p-k}v^k\\ &= pQ^{p-1}v+\omega_1(t,x)v+\omega_2(t,x)v^2 \end{align*} where $\omega_1(t,x)=p\left(\sum_{k=1}^{p-1} \binom{p-1}{k} Q^{p-1-k}\Puiss{h^A}{k}\right)$ and $\omega_2(t,x)=\sum_{k=2}^p \binom{p}{k} \Puiss{Q+h^A}{p-k}v^{k-2}$. Since $\nli{h^A(t)}\leq C\nh{h^A(t)}\leq Ce^{-e_0t}$ and $\nli{v(t)}\leq C\nh{v(t)}\leq C$, we have the estimates \begin{equation} \nli{\omega_1(t)}\leq Ce^{-e_0t},\quad \nli{\omega_2(t)} \leq C, \end{equation} and \eqref{eq:v} can be rewritten \begin{equation} \label{eq:v2} \partial_t v+\L v +\partial_x[\omega_1(t,x)v] +\partial_x[\omega_2(t,x)v^2]=0. \end{equation}
\item If we multiply \eqref{eq:v2} by $Z_+$ and integrate, we get ${\alphap}'-e_0\alphap = \int \omega_1vZ'_+ + \int \omega_2v^2Z'_+$, and so \begin{align*} |{\alphap}'-e_0\alphap| &\leq \nli{\omega_1(t)}\nli{v(t)}\nlu{Z'_+} + \nli{\omega_2(t)}\nli{v(t)}^2\nlu{Z'_+}\\ &\leq Ce^{-(\gamma+e_0)t} +Ce^{-2\gamma t} \leq Ce^{-(\gamma+e_0)t}. \end{align*} Consequently, we have $|(e^{-e_0t}\alphap)'|\leq Ce^{-(\gamma+2e_0)t}$, and since $e^{-e_0t}\alphap(t) \xrightarrow[t\to+\infty]{} 0$ by \eqref{eq:alphapm}, we get by integration $|\alphap(t)|\leq Ce^{-(\gamma+e_0)t}$.

    Multiplying \eqref{eq:v2} by $Z_-$, we obtain similarly $|{\alpham}'+e_0\alpham|\leq Ce^{-(\gamma+e_0)t}$, and so $|\alpham(t)|\leq Ce^{-(\gamma+e_0)t}$, since $|e^{e_0t}\alpham(t)|\leq Ce^{-\frac{1}{2}e_0t} \xrightarrow[t\to+\infty]{} 0$ still by \eqref{eq:alphapm}.
\item We now want to estimate $|(Lv,v)|$. To do this, we rewrite \eqref{eq:v} as \[ \partial_t v +\partial_x \left[ \partial_x^2 v -v + \Puiss{Q+h^A+v}{p}-\Puiss{Q+h^A}{p} \right] = 0, \] multiply this equality by the expression in the brackets and integrate, to obtain $\int \partial_t v\cdot \left[ \partial_x^2 v -v + \Puiss{Q+h^A+v}{p}-\Puiss{Q+h^A}{p} \right] = 0$. In other words, if we define \[ F(t) = \frac{1}{2}\int v_x^2 +\frac{1}{2}\int v^2 -\int \frac{1}{p+1}\Puiss{Q+h^A+v}{p+1} +\int v\Puiss{h^A+Q}{p} +\int \frac{1}{p+1} \Puiss{h^A+Q}{p+1}, \] we have: $\displaystyle F'(t) = -\int \partial_t h^A\cdot \left[ \Puiss{Q+h^A+v}{p}-\Puiss{Q+h^A}{p} -pv\Puiss{Q+h^A}{p-1} \right]$.

    But $h^A$ verifies \eqref{eq:linear} by definition, so $\partial_t h^A = -\partial_x^3 h^A+\partial_x h^A - p\partial_x(Q^{p-1}h^A) +R(h^A)$. Moreover, by proposition \ref{th:ua}, there exists $C>0$ such that for all $t\geq t_0$, we have ${\|h^A(t)\|}_{H^4} \leq Ce^{-e_0t}$. We deduce that \[ \ninf{\partial_t h^A}\leq C\nh{\partial_t h^A}\leq C {\| h^A(t)\|}_{H^4} \leq Ce^{-e_0t}. \] Therefore $|F'(t)| \leq C\ninf{\partial_t h^A}\nld{v(t)}^2 \leq Ce^{-(2\gamma+e_0)t}$, and so $|F(t)|\leq Ce^{-(2\gamma+e_0)t}$ by integration, since $\lim_{t\to +\infty} F(t) = 0$. Moreover, by developing $\Puiss{Q+h^A+v}{p+1}$ in the expression of $F$, we get \begin{align*} F(t) &= \frac{1}{2}\int v_x^2 +\frac{1}{2}\int v^2 -\frac{p}{2}\int \Puiss{Q+h^A}{p-1}v^2 -\frac{1}{p+1} \sum_{k=3}^{p+1} \binom{p+1}{k} \int \Puiss{Q+h^A}{p+1-k}v^k\\ &= \frac{1}{2}(Lv,v) - \frac{1}{2} \int \omega_1(t,x)v^2 - \int \wt{\omega_2}(t,x)v^3 \end{align*} where $\omega_1$ defined above and $\wt{\omega_2}(t,x)= \frac{1}{p+1}\sum_{k=3}^{p+1} \binom{p+1}{k} \Puiss{Q+h^A}{p+1-k} v^{k-3}$ verify the estimates $\nli{\omega_1(t)}\leq Ce^{-e_0t}\ \m{ and }\ \nli{\wt{\omega_2}(t)}\leq C$. Hence we have \begin{align*} \left| F(t)-\frac{1}{2}(Lv,v)\right| &\leq \frac{1}{2}\nli{\omega_1(t)}\nld{v(t)}^2 + \nli{\wt{\omega_2}(t)}\nh{v(t)}^3\\ &\leq Ce^{-(2\gamma+e_0)t} +Ce^{-3\gamma t} \leq Ce^{-(2\gamma+e_0)t}. \end{align*} Thus, we finally obtain $|(Lv,v)|\leq Ce^{-(2\gamma+e_0)t}$.
\item The previous points allow us to estimate $\nh{v_{\bot}}$. Indeed, we have by straightforward calculation from \eqref{eq:decomposition} the identity \[ (Lv,v) = (Lv_{\bot},v_{\bot}) +2\alphap\alpham, \] and so $|(Lv_{\bot},v_{\bot})|\leq |(Lv,v)| +2|\alphap|\cdot|\alpham| \leq Ce^{-(2\gamma+e_0)t}+Ce^{-(2\gamma +2e_0)t} \leq Ce^{-(2\gamma+e_0)t}$. But from \eqref{eq:vortho}, we deduce that $\sigma_1\nh{v_{\bot}}^2 \leq Ce^{-(2\gamma+e_0)t}$, and so $\nh{v_{\bot}}\leq Ce^{-(\gamma+\frac{1}{2}e_0)t}$.
\item To conclude this step, it is now enough to estimate $|\beta(t)|$, since the conclusion will then immediately follow from decomposition \eqref{eq:decomposition}. To do this, we first multiply \eqref{eq:v2} by $Q'$ and integrate, so that \begin{align*} |(\partial_t v,Q') +(\L v,Q')| &\leq \nli{\omega_1(t)}\nli{v(t)}\nlu{Q''} + \nli{\omega_2(t)}\nli{v(t)}^2\nlu{Q''}\\ &\leq Ce^{-(\gamma+e_0)t}+Ce^{-2\gamma t} \leq Ce^{-(\gamma+e_0)t}. \end{align*} Moreover, by applying $\L$ to \eqref{eq:decomposition}, we get $\L v = -e_0\alphap\Y_- +e_0\alpham\Y_+ +\L v_{\bot}$, and so \begin{align*} \nld{Q'}^2 \beta'(t) &= (\partial_t v-{\alphap}'\Y_--{\alpham}'\Y_+,Q')\\ &= (\partial_t v+\L v,Q') -(-e_0\alphap\Y_- +e_0\alpham\Y_+ +{\alphap}'\Y_- +{\alpham}'\Y_+,Q') - (\L v_{\bot},Q')\\ &= (\partial_t v+\L v,Q') -({\alphap}'-e_0\alphap)(\Y_-,Q') -({\alpham}'+e_0\alpham)(\Y_+,Q') + (v_{\bot},LQ''). \end{align*} Finally, we obtain thanks to all previous estimates: \begin{align*} |\beta'(t)| &\leq C|(\partial_t v+\L v,Q')| +C|{\alphap}'-e_0\alphap| +C|{\alpham}'+e_0\alpham| +C\nld{v_{\bot}}\\ &\leq Ce^{-(\gamma+e_0)t} + Ce^{-(\gamma+e_0)t} + Ce^{-(\gamma+e_0)t} + Ce^{-(\gamma+\frac{1}{2}e_0)t} \leq Ce^{-(\gamma+\frac{1}{2}e_0)t} \end{align*} and so $|\beta(t)|\leq Ce^{-(\gamma+\frac{1}{2}e_0)t}$ by integration.
\end{itemize}

\subsubsection{Step 4: Conclusion of uniqueness argument by contraction}

\begin{itemize}
\item The final argument, which corresponds to step 3 in \cite{duy}, is an argument of contraction in short time. In other words, we want to reproduce the contraction argument developed in section \ref{subsubsec:construction} on a short interval of time, with suitable norms.

    Define $w(t,x)=v(t,x-t)$, so that \eqref{eq:v} can be rewritten \[ \partial_t w +\partial_x^3 w = -\partial_x \left[ \Puiss{Q(x-t)+h^A(t,x-t)+w}{p} - \Puiss{Q(x-t)+h^A(t,x-t)}{p} \right]. \] If we denote $\Omega_w(t,x) = \sum_{k=1}^p \binom{p}{k} \Puiss{Q(x-t)+h^A(t,x-t)}{p-k} w^k(t,x)$, then the equation on $w$ can be rewritten \[ \partial_t w+\partial_x^3 w = -\partial_x(\Omega_w). \] Moreover, we have by previous steps: $\forall \gamma>0, \exists C_{\gamma}>0, \forall t\geq t_0,\quad \nh{w(t)}\leq C_{\gamma} e^{-\gamma t}$.
\item Now let $t_1\geq t_0$, $\tau>0$ to fix later, and $I=(t_1,t_1+\tau)$. Moreover, consider the non-linear equation in $\wt{w}$: \begin{equation} \label{eq:wt} \left\{ \begin{aligned} &\partial_t \wt{w} +\partial_x^3 \wt{w} = -\partial_x(\Omega_{\tilde{w}}),\\ &\wt{w}(t_1+\tau)=w(t_1+\tau). \end{aligned} \right. \end{equation} Note that $w$ is of course a solution of \eqref{eq:wt}, associated to a solution $u$ of \eqref{eq:gKdV} in the sense of \cite{kpv}.
\item Then for $t\in I$, we have the following Duhamel's formula: \[ \wt{w}(t) = \M^I(\wt{w})(t) := W(t-t_1-\tau)w(t_1+\tau) + \int_t^{t_1+\tau} W(t-t')\partial_x[\Omega_{\tilde{w}}(t')]\,dt'. \]  Similarly as in section \ref{subsubsec:construction}, we consider \[ \left\{ \begin{aligned} N_1^I(\wt{w}) &=\sup_{t\in I} \nh{\wt{w}(t)},\ N_2^I(\wt{w}) = {\| \wt{w}\|}_{L_x^5L_I^{10}} + {\| \partial_x \wt{w}\|}_{L_x^5L_I^{10}},\\ \Lambda^I(\wt{w}) &= \max(N_1^I(\wt{w}),N_2^I(\wt{w})), \end{aligned} \right. \] and we prove that for $t_1$ large enough, $\tau$ small enough independently of $t_1$, and $K>1$ to determine, $\wt{w} \mapsto \M^I(\wt{w})$ is a contraction on \[ B = \{ \wt{w}\in C^0(I,H^1)\ |\ \Lambda^I(\wt{w})\leq 3K\nh{w(t_1+\tau)} \}. \]

    In other words, we want to estimate $\Lambda^I(\M^I(\wt w))$ in terms of $\Lambda^I(\wt w)$, and as in section \ref{subsubsec:construction}, we estimate only the term \[ \partial_x \M^I(\wt{w})(t) = W(t-t_1-\tau) \partial_x w(t_1+\tau) + \drondx \int_t^{t_1+\tau} W(t-t')\partial_x[\Omega_{\tilde{w}}(t')]\,dt' \] in $L_I^{\infty}L_x^2$ and $L_x^5L_I^{10}$ norms. The term $\M^I(\wt w)(t)$ is treated similarly.
\item Firstly, for the linear term, we have \[ \left\{ \begin{array}{l} \nld{W(t-t_1-\tau)\partial_x w(t_1+\tau)} = \nld{\partial_x w(t_1+\tau)} \leq \nh{w(t_1+\tau)},\\ {\| W(t-t_1-\tau)\partial_x w(t_1+\tau)\|}_{L_x^5L_I^{10}} \leq C\nld{\partial_x w(t_1+\tau)} \leq C\nh{w(t_1+\tau)}, \end{array} \right. \] since $W$ is unitary on $L^2$ and by the linear estimate (2.3) of \cite{kpvcritical}: ${\| W(t)u_0\|}_{L_x^5L_t^{10}}\leq C\nld{u_0}$.
\item For the non linear term, we have to use estimates similar to \eqref{eq:kpv1} and \eqref{eq:kpv2}. We obtain easily by a similar proof that for all $g\in L_x^1L_I^2$, \[ {\left\| \drondx \int_t^{t_1+\tau} W(t-t')g(x,t')\,dt' \right\|}_{L_I^{\infty}L_x^2} + {\left\| \drondx \int_t^{t_1+\tau} W(t-t')g(x,t')\,dt' \right\|}_{L_x^5L_I^{10}} \leq C{\|g\|}_{L_x^1L_I^2}. \] Hence we get \[ \left\{ \begin{array}{l} {\left\| \drondx \int_t^{t_1+\tau} W(t-t')\partial_x[\Omega_{\tilde{w}}(t')]\,dt' \right\|}_{L_I^{\infty}L_x^2} \leq C{\|\partial_x(\Omega_{\tilde{w}})\|}_{L_x^1L_I^2},\\ {\left\| \drondx \int_t^{t_1+\tau} W(t-t')\partial_x[\Omega_{\tilde{w}}(t')]\,dt' \right\|}_{L_x^5L_I^{10}} \leq C{\|\partial_x(\Omega_{\tilde{w}})\|}_{L_x^1L_I^2}. \end{array} \right. \] We deduce that we only have to estimate ${\|\partial_x(\Omega_{\tilde{w}})\|}_{L_x^1L_I^2}$. There are many terms to estimate, so as in section \ref{subsubsec:construction}, we only treat three typical terms: $\mathbf{A} = {\| \partial_x \wt w\cdot {\wt w}^4\cdot {\wt w}^{p-5}\|}_{L_x^1L_I^2}$, $\mathbf{B} = {\| \partial_x \wt w\cdot\Puiss{h^A}{p-1} (t,x-t)\|}_{L_x^1L_I^2}$, $\mathbf{D} = {\| \partial_x \wt w\cdot Q^{p-1}(x-t)\|}_{L_x^1L_I^2}$.

    For $\mathbf{A}$, we have by Hölder's inequality: \[ \mathbf{A} \leq {\| \wt w\|}_{L_I^{\infty}L_x^{\infty}}^{p-5} {\|\partial_x \wt w\|}_{L_x^5L_I^{10}} {\|\wt w\|}_{L_x^5L_I^{10}}^4 \leq Ce^{-e_0t_1}{N_2^I(\wt w)}^5 \leq C'e^{-e_0t_1} N_2^I(\wt w). \] Indeed, we have \[ \Lambda^I(\wt w) \leq 3K\nh{w(t_1+\tau)} \leq Ce^{-e_0t_1}\leq 1\] for $t_1$ large enough, by exponential decay of $w$ in $H^1$. In particular, we have $N_2^I(\wt w)\leq 1$ and ${\| \wt w\|}_{L_I^{\infty}L_x^{\infty}}^{p-5} \leq C{N_1^I(\wt w)}^{p-5} \leq Ce^{-e_0t_1}$ since $p-5\geq 1$.

    For $\mathbf{B}$, we write similarly \[ \mathbf{B} \leq {\|h^A\|}_{L_I^{\infty}L_x^{\infty}}^{p-5} {\|\partial_x \wt w\|}_{L_x^5L_I^{10}} {\|h^A(t,x-t)\|}_{L_x^5L_I^{10}}^4. \] Moreover, we have by construction of $h^A$ (see section \ref{subsubsec:construction}), ${\|h^A\|}_{L_I^{\infty}L_x^{\infty}}^{p-5} \leq Ce^{-e_0t_1}$ since $\nh{h^A(t)}\leq Ce^{-e_0t}\leq Ce^{-e_0t_1}$ for $t\geq t_1$ and $p-5\geq 1$, and \begin{align*} {\| h^A(t,x-t)\|}_{L_x^5L_I^{10}} &\leq {\|h^A(t,x-t)\|}_{L_x^5L_{[t_1,+\infty)}^{10}}\\ &\leq {\|(h^A-\V_{k_0}^A)(t,x-t)\|}_{L_x^5L_{[t_1,+\infty)}^{10}} + {\|V_{k_0}^A(t,x-t)\|}_{L_x^5 L_{[t_1,+\infty)}^{10}}\\ &\leq Ce^{-(k_0+\frac{1}{2})e_0t_1}+Ce^{-e_0t_1} \leq Ce^{-e_0t_1}. \end{align*} Note that the estimate ${\|V_{k_0}^A(t,x-t)\|}_{L_x^5 L_{[t_1,+\infty)}^{10}} \leq Ce^{-e_0t_1}$ follows from the paragraph on $\mathbf{II}_{p-2,1,1}$ in section \ref{subsubsec:construction}.

    For $\mathbf{D}$, we use exponential decay of $Q$ to write \begin{multline*} \mathbf{D} \leq C\int_{\R} \sqrt{ \int_I e^{-2|x-t|} \Carre{\partial_x \wt w}\,dt} dx \leq C\int_{-\infty}^{t_1} e^x \sqrt{\int_I e^{-2t} \Carre{\partial_x \wt w}\,dt} dx\\  +C\int_{t_1+\tau}^{+\infty} e^{-x} \sqrt{\int_I e^{2t} \Carre{\partial_x \wt w}\,dt} dx + C\int_I \sqrt{\int_I \Carre{\partial_x \wt w}\,dt} dx = \mathbf{D_1}+\mathbf{D_2}+\mathbf{D_3}. \end{multline*} But by the Cauchy-Schwarz inequality, we get \[ \left\{ \begin{aligned} \mathbf{D_1} &\leq Ce^{t_1} \sqrt{ \int_I e^{-2t} \int_{\R} \Carre{\partial_x \wt w}\,dx\,dt} \leq Ce^{t_1}N_1^I(\wt w)\sqrt{\int_I e^{-2t}\,dt} \leq C\sqrt{\tau}N_1^I(\wt w),\\ \mathbf{D_2} &\leq Ce^{-(t_1+\tau)}\sqrt{\int_I e^{2t} \int_{\R} \Carre{\partial_x \wt w}\,dx\,dt} \leq Ce^{-(t_1+\tau)}N_1^I(\wt w) \sqrt{\int_I e^{2t}\,dt} \leq C\sqrt{\tau}N_1^I(\wt w),\\ \mathbf{D_3} &\leq C\sqrt{\tau} \sqrt{\int_I \int_I \Carre{\partial_x \wt w}\,dx\,dt} \leq C\tau N_1^I(\wt w). \end{aligned} \right. \] Hence we obtain $\mathbf{D} \leq C\sqrt{\tau}N_1^I(\wt w)$.
\item In conclusion, we have shown that there exist $K,C_1,C_2>0$ such that \[ \Lambda^I(\M^I(\wt w)) \leq K \Big[ \nh{w(t_1+\tau)} +C_1e^{-e_0t_1}\Lambda^I(\wt w) +C_2\sqrt{\tau}\Lambda^I(\wt w) \Big]. \] Now fix $\tau = \frac{1}{9C_2^2K^2}$ and $t_1$ such that $C_1e^{-e_0t_1}\leq \frac{1}{3K}$, thus we get \[ \Lambda^I(\M^I(\wt w)) \leq K\nh{w(t_1+\tau)} +\frac{2}{3}\Lambda^I(\wt w). \] We conclude that $\M^I$ maps $B$ into itself for this choice of $t_1,\tau,K$. We prove similarly that $\M^I$ is a contraction on $B$, and so there exists a unique solution $\wt w \in B$ of \eqref{eq:wt}.
\item Now we identify $w$ and $\wt w$. It is well known for \eqref{eq:gKdV} that for regular solutions ($H^2$), uniqueness holds by energy method. Since $w$ and $\wt w$ are both obtained by fixed point, we get $w=\wt w$ by continuous dependence, persistence of regularity and density. In particular, $w\in B$, and so \[ \nh{w(t_1)} \leq N_1^I(w)\leq \Lambda^I(w)\leq 3K\nh{w(t_1+\tau)}. \] To conclude the proof, we fix $t\geq t_1$, and we remark that a simple iteration argument and the exponential decay at any order of $w$ show that for all $n\in\N$, we have \[ \nh{w(t)} \leq {(3K)}^n \nh{w(t+n\tau)} \leq C_{\gamma}{(3K)}^n e^{-\gamma t}e^{-\gamma n\tau} = C_{\gamma}e^{-\gamma t}\Puiss{3Ke^{-\gamma\tau}}{n}.\] We finally choose $\gamma$ large enough so that $3Ke^{-\gamma \tau}\leq \frac{1}{2}$. Thus, \[ \nh{w(t)} \leq \frac{C}{2^n} \xrightarrow[n\to+\infty]{} 0, \] \emph{i.e.} $\nh{w(t)}=0$. This finishes the proof of proposition \ref{th:uniqueness}.\bigskip
\end{itemize}

\subsection{Corollaries and remarks} \label{subsec:remarks}

\begin{cor} \label{th:uac}
Let $c>0$.
\begin{enumerate}
\item There exists a one-parameter family ${(U_c^A)}_{A\in\R}$ of solutions of \eqref{eq:gKdV} such that \[ \forall A\in\R, \exists t_0\in\R, \forall s\in\R, \exists C>0, \forall t\geq t_0,\quad {\|U_c^A(t,\cdot+ct)-Q_c\|}_{H^s} \leq Ce^{-e_0c^{3/2}t}. \]
\item If $u_c$ is a solution of \eqref{eq:gKdV} such that $\lim_{t\to+\infty} \inf_{y\in\R} \nh{u_c(t)-Q_c(\cdot-y)}=0$, then there exist $A\in\R$, $t_0\in\R$ and $x_0\in\R$ such that $u_c(t)=U_c^A(t,\cdot-x_0)$ for $t\geq t_0$.
\end{enumerate}
\end{cor}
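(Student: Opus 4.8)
The plan is to deduce everything from theorem~\ref{th:main} by the scaling invariance of \eqref{eq:gKdV}, exactly as in corollary~\ref{th:scaling}. Recall that if $u(t,x)$ solves \eqref{eq:gKdV} then so does $\lambda^{\frac{2}{p-1}}u(\lambda^3 t,\lambda x)$ for every $\lambda>0$, and that $Q_c(x)=c^{\frac1{p-1}}Q(\sqrt c\,x)$, so that $Q_c(x-ct)=c^{\frac1{p-1}}Q\bigl(\sqrt c\,(x-ct)\bigr)$. For $c>0$ fixed I would take $\lambda=\sqrt c$ and, for each $A\in\R$, define
\[ U_c^A(t,x)=c^{\frac1{p-1}}\,U^A\bigl(c^{3/2}t,\sqrt c\,x\bigr), \]
where $U^A$ is the solution of \eqref{eq:gKdV} from proposition~\ref{th:ua}; this is again a solution of \eqref{eq:gKdV}, defined for $t\geq c^{-3/2}t_0(A)$, and it inherits the smoothness of $U^A$.

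For part~(1): writing $\tau=c^{3/2}t$ and using $\sqrt c\,(x+ct)=\sqrt c\,x+\tau$, one has $U_c^A(t,x+ct)-Q_c(x)=c^{\frac1{p-1}}\bigl\{[U^A(\tau,\cdot+\tau)-Q](\sqrt c\,x)\bigr\}$. From \eqref{eq:ua} at the level $s$ together with the triangle inequality, $\nhs{U^A(\tau,\cdot+\tau)-Q}\leq Ce^{-2e_0\tau}+|A|e^{-e_0\tau}\nhs{\Y_+}\leq Ce^{-e_0\tau}$; and since a fixed dilation $f\mapsto f(\sqrt c\,\cdot)$ only multiplies the $H^s$ norm by a constant depending on $c$ and $s$, this gives ${\|U_c^A(t,\cdot+ct)-Q_c\|}_{H^s}\leq Ce^{-e_0 c^{3/2}t}$, as required.

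For part~(2): given $u_c$ with $\inf_{y\in\R}\nh{u_c(t)-Q_c(\cdot-y)}\to 0$, I would set $u(t,x)=c^{-\frac1{p-1}}u_c(c^{-3/2}t,c^{-1/2}x)$, which again solves \eqref{eq:gKdV}. Using $Q(x-y)=c^{-\frac1{p-1}}Q_c\bigl((x-y)/\sqrt c\bigr)$ and the change of variables $\xi=c^{-1/2}x$, the quantities $\nh{u(c^{3/2}t')-Q(\cdot-y)}$ and $\nh{u_c(t')-Q_c(\cdot-y/\sqrt c)}$ are comparable up to constants depending only on $c$, while $y\mapsto y/\sqrt c$ is a bijection of $\R$; hence $\inf_{y\in\R}\nh{u(t)-Q(\cdot-y)}\to 0$. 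Theorem~\ref{th:main}(2) (proposition~\ref{th:uniqueness}) then provides $A\in\R$, $t_0\in\R$, $x_0\in\R$ with $u(t)=U^A(t,\cdot-x_0)$ for $t\geq t_0$. Undoing the scaling, $u_c(t,x)=c^{\frac1{p-1}}u(c^{3/2}t,c^{1/2}x)=c^{\frac1{p-1}}U^A\bigl(c^{3/2}t,c^{1/2}x-x_0\bigr)=U_c^A\bigl(t,x-c^{-1/2}x_0\bigr)$ for all $t\geq c^{-3/2}t_0$, which is the claim after renaming $t_0$ and $x_0$.

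Since all the analytic substance is already contained in theorem~\ref{th:main}, no serious difficulty remains: the only point needing (routine) care is the bookkeeping of how the $H^s$ norms, the time variable, and the translation parameter transform under the dilation — in particular checking that the hypothesis of theorem~\ref{th:main}(2), namely $H^1$-convergence uniform over all translates, is preserved, which holds because a fixed dilation multiplies $H^1$ norms by fixed constants and merely reparametrizes the translation.
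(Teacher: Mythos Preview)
Your proof is correct and follows essentially the same approach as the paper: define $U_c^A(t,x)=c^{\frac{1}{p-1}}U^A(c^{3/2}t,\sqrt c\,x)$ via the scaling invariance, transfer the $H^s$ estimate from \eqref{eq:ua} through the dilation for part~(1), and for part~(2) rescale $u_c$ to a solution $u$ satisfying the hypothesis of theorem~\ref{th:main}, apply proposition~\ref{th:uniqueness}, and undo the scaling. The paper's own argument is the same in substance, only slightly terser in the bookkeeping of norms under dilation.
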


\begin{proof}
The proof, based on the scaling invariance, is very similar to the proof of corollary \ref{th:scaling}. We recall that if $u(t,x)$ is a solution of \eqref{eq:gKdV}, then $\lambda^{\frac{2}{p-1}}u(\lambda^3t,\lambda x)$ with $\lambda>0$ is also a solution.
\begin{enumerate}
\item We define $U_c^A$ by $U_c^A(t,x) = c^{\frac{1}{p-1}}U^A(c^{3/2}t,\sqrt cx)$, where $U^A$ is defined in theorem \ref{th:main}. Since $U^A(c^{3/2}t,\sqrt cx+c^{3/2}t) = Q(\sqrt cx) + Ae^{-e_0c^{3/2}t}\Y_+(\sqrt cx)+O(e^{-2e_0c^{3/2}t})$ and $Q_c(x) = c^{\frac{1}{p-1}}Q(\sqrt cx)$, then $U_c^A$ satisfies \[ U_c^A(t,x+ct) = Q_c(x)+Ac^{\frac{1}{p-1}}e^{-e_0c^{3/2}t}\Y_+(\sqrt cx)+O(e^{-2e_0c^{3/2}t}). \]
\item Let $u$ be the solution of \eqref{eq:gKdV} defined by $u(t,x)=c^{-\frac{1}{p-1}} u_c\left(\frac{t}{c^{3/2}},\frac{x}{\sqrt c}\right)$. Then we have \[ u(t,x)-Q(x-y)=c^{-\frac{1}{p-1}}u_c\left(\frac{t}{c^{3/2}},\frac{x}{\sqrt c}\right)-c^{-\frac{1}{p-1}} Q_c\left(\frac{x-y}{\sqrt c}\right) \] for all $y\in\R$, and so like in the proof of corollary \ref{th:scaling}, \[ \inf_{y\in\R} \nh{u(t)-Q(\cdot-y)} \leq K(c) \inf_{y\in\R} \nh{u_c\left( \frac{t}{c^{3/2}}\right) -Q_c\left(\cdot -\frac{y}{\sqrt c}\right)} \xrightarrow[t\to+\infty]{} 0. \] Therefore by theorem \ref{th:main}, there exist $A\in\R$ and $x_0\in\R$ such that $u(t,x)=U^A(t,x-x_0)$, and so finally $u_c(t,x) = U_c^A\left( t,x-\frac{x_0}{\sqrt c}\right)$. \qedhere
\end{enumerate}
\end{proof}

\begin{prop} \label{th:troistypes}
Up to translations in time and in space, there are only three special solutions: $U^1$, $U^{-1}$ and $Q$. More precisely, one has (for $t$ large enough in each case):
\begin{enumerate}[(a)]
\item If $A>0$, then $U^A(t)=U^1(t+t_A,\cdot+t_A)$ for some $t_A\in\R$.
\item If $A=0$, then $U^0(t)=Q(\cdot-t)$.
\item If $A<0$, then $U^A(t)=U^{-1}(t+t_A,\cdot+t_A)$ for some $t_A\in\R$.
\end{enumerate}
\end{prop}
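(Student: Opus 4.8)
The plan is to deduce Proposition~\ref{th:troistypes} from three ingredients already at our disposal: the invariance of \eqref{eq:gKdV} under the transformation $u(t,x)\mapsto u(t+t_0,x+x_0)$, the uniqueness statement of Proposition~\ref{th:uniqueness}, and the sharp asymptotic expansion \eqref{eq:ua} satisfied by each $U^A$. The guiding idea is that translating $U^1$ (resp.\ $U^{-1}$) in time and space only rescales the coefficient $A$ of the $e^{-e_0t}\Y_+$ term in \eqref{eq:ua}, so that by choosing the time shift appropriately we land exactly on $U^A$ once we identify the parameters via the asymptotics.

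First I would dispose of case (b), which is the simplest and does not even use uniqueness. For $A=0$ the sequence $(\mathcal Z_j^0)_{j\geq1}$ produced by Proposition~\ref{th:approxsol} vanishes identically, since $\mathcal Z_1^0=0\cdot\Y_+=0$ and the induction then forces $\mathcal Z_j^0=0$ for all $j$; hence $\V_{k_0}^0\equiv0$, the map $\M$ of Step~1 in section~\ref{subsubsec:construction} has $0$ as its fixed point, and therefore $h^0\equiv0$. This means precisely $U^0(t,x)=Q(x-t)$ for all $t\geq t_0(0)$, which is (b).

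For case (a), fix $A>0$ and set $t_A=-\tfrac1{e_0}\ln A\in\R$. I would define $v(t,x):=U^1(t+t_A,x+t_A)$, which is again a solution of \eqref{eq:gKdV}, and compute from \eqref{eq:ua} applied to $U^1$ that $v(t,x+t)=U^1(t+t_A,x+t+t_A)=Q(x)+e^{-e_0(t+t_A)}\Y_+(x)+O(e^{-2e_0t})=Q(x)+Ae^{-e_0t}\Y_+(x)+O(e^{-2e_0t})$ in every $H^s$. In particular $\nh{v(t,\cdot+t)-Q}\to0$, so $\inf_{y}\nh{v(t)-Q(\cdot-y)}\to0$ and Proposition~\ref{th:uniqueness} yields $A'\in\R$, $t_0'\in\R$, $x_0'\in\R$ with $v(t)=U^{A'}(t,\cdot-x_0')$ for $t\geq t_0'$. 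It remains to show $x_0'=0$ and $A'=A$: writing $v(t,y+t)=U^{A'}(t,(y-x_0')+t)$ and inserting \eqref{eq:ua} for $U^{A'}$ gives $v(t,y+t)=Q(y-x_0')+A'e^{-e_0t}\Y_+(y-x_0')+O(e^{-2e_0t})$; comparing with the expansion of $v$ above and letting $t\to+\infty$ forces $Q(\cdot-x_0')=Q$ in $H^s$, hence $x_0'=0$ because $Q$ is even with $Q'(x)<0$ for $x>0$ (claim~\ref{th:solitons}), so it has a strict maximum at $0$; then dividing the remaining identity by $e^{-e_0t}$ and letting $t\to+\infty$ gives $(A'-A)\Y_+=0$, whence $A'=A$ since $\Y_+\not\equiv0$. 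Thus $U^A(t)=v(t)=U^1(t+t_A,\cdot+t_A)$ for $t$ large. Case (c) is handled identically: for $A<0$ take $t_A=-\tfrac1{e_0}\ln|A|$ and $v(t,x)=U^{-1}(t+t_A,x+t_A)$, whose expansion carries the coefficient $-e^{-e_0t_A}=A$, and conclude exactly as above.

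The only genuinely delicate point is the final identification of $(A',x_0')$: it is essential to use the \emph{full} expansion \eqref{eq:ua} in $H^s$, not merely $H^1$-convergence to $Q$, in order to read off first the translation $x_0'$ (from the leading order, using non-degeneracy of $Q$) and then the parameter $A'$ (from the $e^{-e_0t}$ order, using $\Y_+\not\equiv0$). Everything else is routine bookkeeping with the time and space shifts.
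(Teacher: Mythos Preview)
Your proof is correct and, for cases (a) and (c), follows essentially the same route as the paper: define the appropriate time-and-space translate of $U^{\pm1}$, check it has the asymptotic expansion with coefficient $A$, invoke Proposition~\ref{th:uniqueness} to write it as some $U^{A'}(\cdot-x_0')$, and then identify $x_0'=0$ and $A'=A$ by matching successive orders in the expansion (the paper uses lemma~\ref{th:Qmass} for the first identification where you use the strict maximum of $Q$ at $0$, but this is cosmetic). For case (b) you take a slightly different and more direct path: instead of applying uniqueness to $Q(\cdot-t)$ and then reading off $A=0$, $x_0=0$ from the expansion as the paper does, you trace through the construction and observe that $\mathcal Z_j^0=0$ for all $j$ and hence the fixed point of $\M$ is $0$, giving $U^0=Q(\cdot-t)$ immediately. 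This shortcut is valid and has the minor advantage of not requiring Proposition~\ref{th:uniqueness} for that case.
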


\begin{proof}
\begin{enumerate}[(a)]
\item Let $A>0$ and denote $t_A = -\frac{\ln A}{e_0}$. Then by proposition \ref{th:ua}, \[ U^1(t+t_A,x+t+t_A) = Q(x)+e^{-e_0(t+t_A)}\Y_+(x)+O(e^{-2e_0t}) = Q(x)+Ae^{-e_0t}\Y_+(x)+O(e^{-2e_0t}). \] In particular, we have $\lim_{t\to+\infty} \inf_{y\in\R} \nh{U^1(t+t_A)-Q(\cdot-y)}=0$, and so by proposition \ref{th:uniqueness}, there exist $\wt{A}\in\R$ and $x_0\in\R$ such that $U^1(t+t_A)=U^{\wt A}(t,\cdot-x_0)$. But still by proposition \ref{th:ua}, we have $U^1(t+t_A,x+t+t_A)=U^{\wt A}(t,x+t+t_A-x_0) = Q(x+t_A-x_0) +\wt A e^{-e_0t}\Y_+(x+t_A-x_0)+O(e^{-2e_0t})$, and so \[ Q(x+t_A-x_0)+\wt Ae^{-e_0t}\Y_+(x+t_A-x_0) +O(e^{-2e_0t}) = Q(x)+Ae^{-e_0t}\Y_+(x)+O(e^{-2e_0t}). \] The first order imposes $x_0=t_A$, since $\nh{Q-Q(\cdot+t_A-x_0)}\leq Ce^{-e_0t}$ and so lemma \ref{th:Qmass} applies for $t$ large. Similarly, the second order imposes $\wt A=A$, as expected.
\item Since $\inf_{y\in\R} \nh{Q(\cdot-t)-Q(\cdot-y)}=0$, then proposition \ref{th:uniqueness} applies, so there exist $A\in\R$ and $x_0\in\R$ such that $Q(x-t)=U^A(t,x-x_0)$. Hence we have by proposition \ref{th:ua} \[ U^A(t,x+t) = Q(x-x_0) = Q(x)+Ae^{e_0t}\Y_+(x)+O(e^{-2e_0t}). \] As in the previous case, it follows first that $x_0=0$, then $A=0$, and so the result.
\item For $A<0$, the proof is exactly the same as $A>0$, with $-A$ instead of $A$. \qedhere\bigskip
\end{enumerate}
\end{proof}

We conclude this paper by two remarks, based on the following claim. The first one is the fact that $U^{-1}(t)$ is defined for all $t\in\R$, and the second one is the identification of the special solution $w(t)$ constructed in section \ref{sec:compactness} among the family $(U^A)$ constructed in section \ref{sec:contraction}.

\begin{claim} \label{th:gradient}
For all $c>0$, $\nld{\partial_x U_c^A(t)}^2-\nld{Q'_c}^2$ has the sign of $A$ as long as $U_c^A(t)$ exists.
\end{claim}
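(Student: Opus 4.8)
The plan is to reduce to $c=1$ by the scaling invariance, and then to combine the conservation of mass and energy with the variational characterization \eqref{eq:caracGN} of $Q$ to show that $g(t):=\nld{\partial_x U^A(t)}^2-\nld{Q'}^2$ cannot vanish, hence cannot change sign, on the maximal interval of existence of $U^A$. First, by corollary \ref{th:uac} one has $U_c^A(t,x)=c^{\frac1{p-1}}U^A(c^{3/2}t,\sqrt cx)$, so a change of variables together with \eqref{eq:Qcmass} gives $\nld{\partial_x U_c^A(t)}^2-\nld{Q'_c}^2=c^{\frac{p+3}{2(p-1)}}\bigl(\nld{\partial_x U^A(c^{3/2}t)}^2-\nld{Q'}^2\bigr)$, which has the same sign as the bracket; hence it suffices to treat $c=1$. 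Recall that $U^A$ is defined on an interval $(T^-,+\infty)$ by proposition \ref{th:ua}.

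Next, since $\nh{U^A(t,\cdot+t)-Q}\to0$ and both mass and energy are conserved and translation invariant, $\int (U^A(t))^2=\int Q^2$ and $E(U^A(t))=E(Q)$ for every $t\in(T^-,+\infty)$; expanding the energy identity yields
\[ \tfrac12\,g(t)=\tfrac1{p+1}\Bigl(\int (U^A(t))^{p+1}-\int Q^{p+1}\Bigr). \]
If $A=0$, then $U^0(t)=Q(\cdot-t)$ by proposition \ref{th:troistypes} and $g\equiv0$, as required. If $A\neq0$, then \eqref{eq:ua} with $s=1$ gives $\partial_x U^A(t,\cdot+t)=Q'+Ae^{-e_0t}\Y'_++O_{L^2}(e^{-2e_0t})$, whence $g(t)=2Ae^{-e_0t}(Q',\Y'_+)+O(e^{-2e_0t})$, and since $(Q',\Y'_+)>0$ by the normalization of lemma \ref{th:Z}(iv), $g(t)$ has the sign of $A$ for all $t$ large.

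It remains to show that, when $A\neq0$, $g$ never vanishes on $(T^-,+\infty)$. Suppose $g(t_1)=0$. Then $\nld{\partial_x U^A(t_1)}=\nld{Q'}$, $\nld{U^A(t_1)}=\nld Q$, and $\int (U^A(t_1))^{p+1}=\int Q^{p+1}$ by the displayed identity. Using $\int (U^A(t_1))^{p+1}\le\int|U^A(t_1)|^{p+1}={\|U^A(t_1)\|}_{L^{p+1}}^{p+1}$ and \eqref{eq:GN} with the matched norms, we obtain
\[ \int Q^{p+1}=\int (U^A(t_1))^{p+1}\le {\|U^A(t_1)\|}_{L^{p+1}}^{p+1}\le C\gn(p)\,\nld{Q'}^{\frac{p-1}2}\nld Q^{\frac{p+3}2}=\int Q^{p+1}, \]
so every inequality is an equality. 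In particular ${\|U^A(t_1)\|}_{L^{p+1}}^{p+1}$ saturates \eqref{eq:GN}, so by \eqref{eq:caracGN} $U^A(t_1)=a_0Q(\lambda_0\cdot+b_0)$, and matching the $L^2$ and $\dot H^1$ norms with those of $Q$ forces $\lambda_0=1$ and $|a_0|=1$. The value $a_0=-1$ is ruled out by an elementary discussion: when $p$ is even the equality $\int (U^A(t_1))^{p+1}=\int|U^A(t_1)|^{p+1}$ forces $U^A(t_1)\ge0$; when $p$ is odd, $-Q(\cdot-t)$ solves \eqref{eq:gKdV}, so $a_0=-1$ would give $U^A(t)=-Q(\cdot+b_0-(t-t_1))$, while $\nh{-Q(\cdot+a)-Q}\ge\sqrt2\,\nld Q>0$ for every $a$, contradicting $\nh{U^A(t,\cdot+t)-Q}\to0$. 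Hence $a_0=1$, and by uniqueness for \eqref{eq:gKdV}, $U^A(t)=Q(\cdot+b_0-(t-t_1))$; then $\nh{U^A(t,\cdot+t)-Q}=\nh{Q(\cdot+b_0+t_1)-Q}\to0$ forces $b_0+t_1=0$, i.e.\ $U^A(t)=Q(\cdot-t)$. Comparing this with \eqref{eq:ua} gives $Ae^{-e_0t}\Y_+=O_{H^1}(e^{-2e_0t})$, so $A=0$, a contradiction.

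Therefore, for $A\neq0$ the continuous function $g$ is nowhere zero and has the sign of $A$ near $+\infty$, hence has the sign of $A$ throughout $(T^-,+\infty)$ by the intermediate value theorem; and $g\equiv0$ for $A=0$, which proves the claim. The main obstacle is the last step: realizing that the vanishing of $g$ at a single time, via mass--energy conservation, places $U^A(t_1)$ exactly on the Gagliardo--Nirenberg equality set, then pinning down $a_0=\lambda_0=1$ and extracting the contradiction from the sharp asymptotics \eqref{eq:ua}.
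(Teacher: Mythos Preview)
Your proof is correct and follows essentially the same approach as the paper: asymptotics give the sign of $g$ for large $t$, and if $g$ vanished somewhere, mass and energy conservation together with the variational characterization \eqref{eq:caracGN} would force $U^A$ to be a translate of $\pm Q$, a contradiction. The only difference is in the final step: the paper observes that $U^A(t)=\pm Q(\cdot-t+T+b_0)$ immediately gives $g\equiv 0$, contradicting the already-established sign for large $t$, without needing to rule out $a_0=-1$; your parity case analysis for $a_0=-1$ is correct but unnecessary.
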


\begin{proof}
\begin{itemize}
\item From corollary \ref{th:uac}, we have \[ \partial_x U_c^A(t,x+ct) = Q'_c(x) + Ac^{\frac{p+1}{2(p-1)}}e^{-e_0c^{3/2}t}\Y'_+(\sqrt cx) +O(e^{-2e_0c^{3/2}t}) \] and so \[ \nld{\partial_x U_c^A(t)}^2 - \nld{Q'_c}^2 = 2Ac^{\frac{p+1}{2(p-1)}}e^{-e_0c^{3/2}t} \int Q'_c(x)\Y'_+(\sqrt cx)\,dx +O(e^{-2e_0c^{3/2}t}). \] But $\int Q'_c(x)\Y'_+(\sqrt cx)\,dx = c^{\frac{1}{p-1}}\int Q'(y)\Y'_+(y)\,dy>0$ by the substitution $y=\sqrt cx$ and the normalization chosen in lemma \ref{th:Z}, and so $\nld{\partial_x U_c^A(t)}^2 - \nld{Q'_c}^2$ has the sign of $A$ for $t$ large enough.
\item It remains to show that this fact holds as long as $U_c^A(t)$ exists. For example, suppose that $A>0$ and so $\nld{\partial_x U_c^A(t)}^2 - \nld{Q'_c}^2 >0$ for $t\geq t_1$, and suppose for the sake of contradiction that there exists $T<t_1$ such that $U^A(T)$ is defined and $\nld{\partial_x U_c^A(T)}^2 = \nld{Q'_c}^2$. Since $\nh{U_c^A(t,\cdot+ct)-Q_c}\vers 0$, then by \eqref{eq:mass} and \eqref{eq:energy}, we also have $\nld{U_c^A(T)}=\nld{Q_c}$ and $E(U_c^A(T))=E(Q_c)$. In other words, we would get by scaling \[ \nld{U^A(T)}=\nld{Q},\ \nld{\partial_x U^A(T)}=\nld{Q'}\ \m{ and }\ E(U^A(T))=E(Q). \] But the two last identities give in particular $\int {U^A(T)}^{p+1} = \int Q^{p+1}$, and so by \eqref{eq:caracGN} \[ {\|U^A(T)\|}_{L^{p+1}}^{p+1} \geq {\| Q\|}_{L^{p+1}}^{p+1} = C\gn(p)\nld{Q'}^{\frac{p-1}{2}} \nld{Q}^{\frac{p+3}{2}} = C\gn(p)\nld{\partial_x U^A(T)}^{\frac{p-1}{2}} \nld{U^A(T)}^{\frac{p+3}{2}}. \] Still by \eqref{eq:caracGN}, we get $(\lambda_0,a_0,b_0)\in \R_+^* \times\R\times\R$ such that $U^A(T,x)=a_0Q(\lambda_0 x+b_0)$. But $\nld{U^A(T)}=\nld{Q}$ and $\nld{\partial_x U^A(T)}=\nld{Q'}$ impose $\lambda_0=1$ and $a_0\in \{-1,1\}$. Thus, by uniqueness in \eqref{eq:gKdV}, $U^A(t,x)=\pm Q(x-t+T+b_0)$ for all $t\geq T$. In particular, $\nld{\partial_x U_c^A(t)}^2 = \nld{Q'_c}^2$ for $t\geq t_1$, which is a contradiction. The cases $A=0$ and $A<0$ are treated similarly. \qedhere
\end{itemize}
\end{proof}

\begin{rem}
Let us now notice that $U^{-1}$ is globally defined, \emph{i.e.} $U^{-1}(t)$ exists for all $t\in\R$. By the blow up criterion and the mass conservation, it is enough to remark that $\nld{\partial_x U^{-1}(t)}$ is bounded uniformly on its interval of existence, which is an immediate consequence of claim \ref{th:gradient} since $\nld{\partial_x U^{-1}(t)} < \nld{Q'}$ for all $t$.
\end{rem}

\begin{rem}
As noticed in remark \ref{th:choixuon}, we can chose $\lambda_n=1-\frac{1}{n}$ in the definition of $u_{0,n}$ in section \ref{sec:compactness}. We still call $w(t)$ the special solution obtained by this method for this new initial data. In this remark, we prove that $w=U_{c_+}^{-1}$ up to translations in time and in space. We do not know if $U^1$ can be obtained similarly by a compactness method. We recall that $u_{0,n}(x) = \lambda_n Q(\lambda_n^2 x)$, $u_n(T_n,\cdot+x_n(T_n))\cvf \check{w}_0 \neq Q_{c_+}$ and $\nh{w(t,\cdot+\rho(t))-Q_{c_+}}\vers 0$.
\begin{itemize}
\item First note that $\int u_{0,n}'^2 = \lambda_n^4\int Q'^2 < \int Q'^2$ for $n\geq 2$, and let us prove that $\nld{\partial_x(u_n(T_n))}<\nld{Q'}$ for $n$ large enough. Otherwise, there would exist $n$ large and $T\in [0,T_n]$ such that $\nld{\partial_x(u_n(T))}=\nld{Q'}$ and $E(u_{0,n})<E(Q)$. But we have by \eqref{eq:energy}, \begin{align*} E(u_{0,n}) &= E(u_n(T)) = \frac{1}{2}\int \Carre{\partial_x(u_n(T))} -\frac{1}{p+1}\int u_n^{p+1}(T) = \frac{1}{2}\int Q'^2 -\frac{1}{p+1}\int u_n^{p+1}(T)\\ &< E(Q) = \frac{1}{2}\int Q'^2 -\frac{1}{p+1}\int Q^{p+1}. \end{align*} Hence, as $\nld{u_n(T)} = \nld{u_{0,n}} = \nld{Q}$ by \eqref{eq:mass}, \begin{align*} {\|u_n(T)\|}_{L^{p+1}}^{p+1} \geq \int u_n^{p+1}(T) > \int Q^{p+1} &= C\gn(p) \Puiss{\int Q'^2}{\frac{p-1}{4}}\Puiss{\int Q^2}{\frac{p+3}{4}}\\ &= C\gn(p) \Puiss{\int \Carre{\partial_x(u_n(T))}}{\frac{p-1}{4}}\Puiss{\int u_n^2(T)}{\frac{p+3}{4}}, \end{align*} which would be a contradiction with the Gagliardo-Nirenberg inequality \eqref{eq:GN}.
\item Since $u_n(T_n,\cdot+x_n(T_n))\cvf \check{w}_0$ in $H^1$, we obtain $\nld{w'_0}\leq \nld{Q'}$ and $\nld{w_0}\leq \nld{Q}$ by weak convergence. But $\nh{w(t,\cdot+\rho(t))-Q_{c_+}}\vers 0$ implies by \eqref{eq:mass} and \eqref{eq:Qcmass} that \[ \nld{w_0}^2 = \nld{w(t)}^2 = \nld{Q_{c_+}}^2 = c_+^{\frac{5-p}{2(p-1)}} \nld{Q}^2 \leq \nld{Q}^2, \] thus $c_+\geq 1$, and so $\nld{w'_0}^2\leq \nld{Q'}^2 = c_+^{-\frac{p+3}{2(p-1)}}{\|Q'_{c_+}\|}_{L^2}^2 \leq {\|Q'_{c_+}\|}_{L^2}^2$ by \eqref{eq:Qcmass}.
\item Finally, since $\nh{w(t,\cdot+\rho(t))-Q_{c_+}}\vers 0$, corollary \ref{th:uac} applies, and so there exists $A\in\R$ such that $w=U_{c_+}^A$ up to a translation in space. But the conclusion of the previous point and claim \ref{th:gradient} impose $A<0$ (note that $A\neq 0$ since $w_0\neq Q_{c_+}$), \emph{i.e.} $w=U_{c_+}^{-1}$ up to translations in time and in space by proposition \ref{th:troistypes}.
\end{itemize}
\end{rem}

\bibliographystyle{plain}

\end{document}